\theoremstyle{plain}
\newtheorem{theorem}{Theorem}[section]
\newtheorem{corollary}[theorem]{Corollary}
\newtheorem{lemma}[theorem]{Lemma}
\newtheorem{proposition}[theorem]{Proposition}
\theoremstyle{definition}
\newtheorem{definition}[theorem]{Definition}
\theoremstyle{remark}
\newtheorem{remark}[theorem]{Remark}
\numberwithin{equation}{section}
\newcommand{\average}{{\mathchoice {\kern1ex\vcenter{\hrule height.4pt
				width 6pt depth0pt} \kern-9.7pt} {\kern1ex\vcenter{\hrule
				height.4pt width 4.3pt depth0pt} \kern-7pt} {} {} }}
\def\R{\mathbb{R}}
\def\S{\mathbb{S}}
\def\N{\mathbb{N}}
\newcommand{\loc}{{\mathrm{loc}}}
\begin{document}
	
	\title[Higher order parabolic boundary Harnack inequality]{Higher order parabolic boundary Harnack inequality in $C^1$ and $C^{k,\alpha}$ domains}
	
	\author{Teo Kukuljan}
	\address{Universitat de Barcelona,
		Departament de Matematiques i Inform\`atica, Gran Via de les Corts Catalanes 585,
		08007 Barcelona, Spain.}
	\email{tkukuljan@ub.edu}
	
	\thanks{The author has received funding from the European Research Council (ERC) under the Grant Agreement No 801867, and from the Swiss National Science Foundation project 200021\_178795. 
	The paper was written under supervision of Xavier Ros-Oton, we thank him for proposing many ideas and approaches.}
	\keywords{Parabolic equations, boundary Harnack, free boundary, higher regularity.}
	\subjclass[2010]{35K10, 35R35.}
	
	\begin{abstract}
		We study the boundary behaviour of solutions to second order parabolic linear equations in moving domains. Our main result is a higher order boundary Harnack inequality  in $C^1$ and $C^{k,\alpha}$ domains, providing that the quotient of two solutions vanishing on the boundary of the domain is as smooth as the boundary. 
		
		As a consequence of our result, we provide a new proof of higher order regularity of the free boundary in the parabolic obstacle problem.
	\end{abstract}

	\maketitle
	
\section{Introduction}	

	We study the boundary behaviour of solutions of 
	\begin{equation}\label{theEquation}
	\left\lbrace\begin{array}{rcll}
		\partial_tu + Lu &=& f &\text{in }\Omega\cap Q_1\\
		u &=& 0 &\text{on }\partial\Omega\cap Q_1,
	\end{array}\right.
	\end{equation}
	where $\Omega\subset\R^{n+1}$ is an open set, $Q_1 = B_1\times (-1,1)$ and $L$ is a second order operator of the form 
	\begin{equation}\label{oparatorForm}
		L u(x,t) = - \sum_{i,j=1}^na_{ij}(x,t) \partial_{ij}u(x,t) + \sum_{i=1}^n b_i(x,t)\partial_i u(x,t).
	\end{equation}
	The matrix $A(x,t):=\left[a_{i,j}(x,t)\right]_{ij}$ is assumed to be uniformly elliptic and the vector $b(x,t) = \left[ b_i(x,t)\right] _{i}$ is bounded, that is, for every $(x,t)$ it holds 
	\begin{equation}\label{coefficientCondition}
		0<\lambda I\leq A(x,t)\leq\Lambda I,\quad\quad||b||_{L^\infty}\leq \Lambda.
	\end{equation}	
	
	In the elliptic version of the problem, classical boundary Schauder estimates imply that solutions vanishing on the boundary of a $C^{k,\alpha}$ domain are of class $C^{k,\alpha}$ up to the boundary. This implies that the quotient of the solution with the distance function to the boundary is $C^{k-1,\alpha}$. This has been refined by De Silva and Savin \cite{DS15}, who showed that replacing the distance function with another solution gives that the quotient is $C^{k,\alpha}$ as well, under positivity assumption on the solution in the denominator. This is a higher order boundary Harnack estimate. 
	
	Such result was extended  to the parabolic setting in \cite{BG16}, where Banerjee and Garofalo established a new parabolic higher order boundary Harnack estimate in $C^{k,\alpha}$ domains.
	
	The goal of this paper is twofold. On the one hand, we give a new proof of the result in \cite{BG16}, which is based on blow-up and contradiction arguments. On the other hand, we extend the result to the case of parabolic $C^1$ domains, which was not covered in \cite{BG16}. This is important when applying these estimates to obtain the higher regularity of free boundaries in parabolic obstacle problems, as we will see below.
	
	We work in domains $\Omega$ which are allowed to be moving in time, and have to satisfy some parabolic regularity properties. 
	\begin{definition}\label{assumptionsOnDomain}
		Let $\beta\geq 1$ and let $C^\beta_p$ be the parabolic H\"older space defined in Definition~\ref{parabolicHolderSpaces}. We say that $\Omega\subset \R^{n+1}$ is $C^\beta_p$ in $Q_1$, if there is a function $F\colon Q_1'\to\R$, with $||F||_{C^\beta_p(Q_1')}\leq C_0$ and $F(0,0) =0,$ $\nabla_{x'} F(0,0) = 0$ so that
		$$\Omega\cap Q_1 = \{(x',x_n,t);\text{ }x_n>F(x',t)\}.$$
	\end{definition}
	
	For a domain $\Omega$  as in the above definition we define the parabolic distance to the boundary as follows
	$$d(x,t) = \inf_{(z,s)\in\partial\Omega\cap Q_1}\left( |x-z|+|t-s|^{\frac{1}{2}}\right),\quad (x,t)\in \Omega\cap Q_1.$$
	
	Our first main result says that if we have two solutions of \eqref{theEquation} in a $C^1_p$ domain, with a bounded right-hand side, then their quotient is nearly as smooth as the boundary. This is a parabolic boundary Harnack inequality, and reads as follows.
	
	\begin{theorem}\label{regularityOfQuotient2}
		Let $\Omega$ be $C^1_p$ in $Q_1$, and let $L$ be as in \eqref{oparatorForm}--\eqref{coefficientCondition}, with $A\in C(\overline{\Omega})$ and $b\in L^\infty(\Omega).$ Let $u_i$ be two solutions of \eqref{equationForTwoSolutions}	
		with $f_i\in L^\infty(\Omega\cap Q_1).$ Assume that $|u_2|\geq c_2 d$ with $c_2>0$ and $||f_2||_{L^{\infty}(\Omega\cap Q_1)} + ||u_2||_{L^\infty(\Omega\cap Q_1)}\leq C_2.$
		
		Then for any $\varepsilon\in(0,1)$ we have
		$$\left|\left| \frac{u_1}{u_2}\right|\right| _{C^{1-\varepsilon}_p(\overline{\Omega}\cap Q_{1/2})}\leq C \left(||f_1||_{L^{\infty}_p(\Omega\cap Q_1)} + ||u_1||_{L^\infty(\Omega\cap Q_1)}\right),$$
		with $C$ depending only on $n,\varepsilon,c_2,C_2,\Omega$, the modulus of continuity of $A$ and ellipticity constants.
	\end{theorem}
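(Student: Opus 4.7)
The plan is to establish the $C^{1-\varepsilon}_p$ bound on $u_1/u_2$ by a blow-up and contradiction argument, following the strategy pioneered by De~Silva--Savin \cite{DS15} in the elliptic setting.

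First I would reduce the theorem to the following pointwise decay: for every boundary point $(x_0,t_0)\in\partial\Omega\cap Q_{3/4}$ there exists a constant $K=K(x_0,t_0)$ with
\[
\|u_1 - K u_2\|_{L^\infty(Q_r(x_0,t_0)\cap\Omega)} \leq C\, r^{2-\varepsilon} \qquad \text{for all } r\in(0,r_0).
\]
Using the non-degeneracy $|u_2|\geq c_2 d$, which at parabolic distance $r$ from the boundary gives $|u_2|\gtrsim r$, the pointwise decay translates into $|u_1/u_2 - K|\lesssim r^{1-\varepsilon}$ at parabolic distance $\sim r$; combining this with standard interior parabolic H\"older estimates for the quotient (which solves a non-divergence parabolic equation with bounded right-hand side away from $\partial\Omega$) then yields the full $C^{1-\varepsilon}_p$ bound up to the boundary.

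To prove the pointwise decay I argue by contradiction. If it fails, there exist sequences of domains $\Omega_k$ (all $C^1_p$ with a uniform constant), operators $L_k$ with uniformly elliptic coefficients and a common modulus of continuity for $A$, solutions $u_1^k,u_2^k$ satisfying the hypotheses with uniform constants, boundary points $(x_k,t_k)$ and scales $r_k\to 0$ for which no such $K$ exists. Setting
\[
\phi_k(r):=\inf_{K\in\R}\|u_1^k-Ku_2^k\|_{L^\infty(Q_r(x_k,t_k)\cap\Omega_k)},
\]
a standard Campanato-type selection lemma lets me assume $\phi_k(r_k)/r_k^{2-\varepsilon}\to\infty$ and $\phi_k(\rho)\leq 2\phi_k(r_k)(\rho/r_k)^{2-\varepsilon}$ for all $\rho\in[r_k,1/2]$. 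Picking $K_k$ attaining the infimum at $r=r_k$ and setting $\theta_k:=\phi_k(r_k)$, I rescale
\[
v_k(y,s):=\frac{(u_1^k-K_k u_2^k)(x_k+r_k y, t_k+r_k^2 s)}{\theta_k},\qquad w_k(y,s):=\frac{u_2^k(x_k+r_k y, t_k+r_k^2 s)}{r_k}.
\]
The $v_k$ satisfy $\|v_k\|_{L^\infty(Q_1\cap\tilde\Omega_k)}=1$, the growth bound $\|v_k\|_{L^\infty(Q_R\cap\tilde\Omega_k)}\leq 2 R^{2-\varepsilon}$ for $1\leq R\leq 1/(2r_k)$, and the orthogonality $\|v_k-\tilde Kw_k\|_{L^\infty(Q_1\cap\tilde\Omega_k)}\geq 1$ for every $\tilde K\in\R$ (this is just the optimality of $K_k$ rewritten after the rescaling). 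The rescaled domains $\tilde\Omega_k$ converge locally to the half-space $H=\{y_n>0\}$ because $\Omega$ is $C^1_p$ with $\nabla_{x'}F(0,0)=0$, and the rescaled right-hand sides tend to zero.

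Using boundary H\"older estimates for parabolic equations in $C^1_p$ domains to get equicontinuity, I extract uniform limits $v_k\to v_\infty$, $w_k\to w_\infty$ on compacts. The limits solve a homogeneous constant-coefficient parabolic equation in $H$, vanish on $\{y_n=0\}$, satisfy $w_\infty\geq c_2 y_n$, and $v_\infty$ has growth at most $2(|y|+|s|^{1/2})^{2-\varepsilon}$. A Liouville-type theorem for such half-space problems forces any global solution vanishing on $\{y_n=0\}$ with strictly subquadratic parabolic growth to be a scalar multiple of $w_\infty$; hence $v_\infty=K_\infty w_\infty$ for some $K_\infty\in\R$. But the orthogonality passes to the limit and gives $\|v_\infty-\tilde Kw_\infty\|_{L^\infty(Q_1\cap H)}\geq 1$ for every $\tilde K$; choosing $\tilde K=K_\infty$ yields $0\geq 1$, a contradiction.

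The main obstacle I anticipate is the Liouville theorem at the borderline growth rate $r^{2-\varepsilon}$: one must show that two solutions in the half-space, vanishing on $\{y_n=0\}$ and with strictly subquadratic parabolic growth, are proportional. This is precisely where the $\varepsilon$ loss enters and explains why in a merely $C^1_p$ domain we only obtain $C^{1-\varepsilon}_p$ regularity (rather than $C^{1,\alpha}_p$). A secondary technical point, absent in the $C^{k,\alpha}_p$ setting of \cite{BG16}, is that the compactness step requires a boundary H\"older estimate quantitative and uniform in the $C^1_p$ modulus of the domain, rather than a classical Schauder bound.
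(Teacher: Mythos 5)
Your plan follows essentially the same strategy as the paper: (i) reduce to the expansion $\|u_1 - K u_2\|_{L^\infty(Q_r(z,s)\cap\Omega)} \lesssim r^{2-\varepsilon}$ at boundary points (this is the paper's Proposition~\ref{HarnackExpansion0}); (ii) prove that expansion by a contradiction--blow-up argument; (iii) invoke the Liouville theorem in the half-space (Proposition~\ref{Liouville}); (iv) combine the expansion with the boundary estimate for $u_2$ (Corollary~\ref{lipschitzBounds}) and the nondegeneracy $|u_2|\geq c_2 d$ to pass to the quotient. So the route is the same, with two technical variants: you normalize $K_k$ by $L^\infty$-minimization, whereas the paper uses the $L^2$-projection $c_{k,\rho}=\int u_1 u_2/\int u_2^2$ (so that the orthogonality $\int_{Q_1}v_m\,u_{2,k_m}(r_m\cdot)=0$ is exact, making it easy to track the drift of the optimal constant across scales); and you extract an explicit limit $w_\infty$ of the rescaled $u_2$, whereas the paper phrases the contradiction purely through the orthogonality against the limit $(x_n)_+$ without taking a limit of $u_2$ itself.

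The one place where you need to be more careful is the assertion that $w_k(y,s)=u_2^k(x_k+r_k y,t_k+r_k^2 s)/r_k$ converges on compacts of $\{y_n>0\}$. The lower bound $w_k\geq c_2 d_k\to c_2(y_n)_+$ is fine, but the upper bound is not: in a merely $C^1_p$ domain, Corollary~\ref{lipschitzBounds} only gives $|u_2|\leq C d^{1-\varepsilon'}$, whence $w_k(y,s)\lesssim r_k^{-\varepsilon'}d_k(y,s)^{1-\varepsilon'}$, which does not stay bounded as $r_k\to0$ for any fixed $\varepsilon'>0$. So without further work $w_k$ need not be locally bounded, and the compactness step as written does not go through. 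Either you must strengthen the boundary estimate for $u_2$ at the blow-up scale (using that the domain flattens, so the modulus of $\nabla F$ goes to zero and the exponent $1-\varepsilon'$ improves along the blow-up) to get a scale-independent linear bound $w_k\lesssim d_k$, or you must bypass the limit $w_\infty$ entirely, as the paper does, by tracking only the drift of the optimal constants $|c_{k,\rho}-c_{k,2\rho}|\lesssim\theta(\rho)\rho^{1-\varepsilon}$ (which uses only the lower bound $u_2\geq c_0 d$) and passing the orthogonality to the limit. You should also make explicit the growth bound $\|v_k\|_{L^\infty(Q_R\cap\tilde\Omega_k)}\lesssim R^{2-\varepsilon}$: it does not follow directly from the Campanato inequality $\phi_k(\rho)\leq2\phi_k(r_k)(\rho/r_k)^{2-\varepsilon}$, because the optimal constant changes with the scale $\rho$; you need the telescoping estimate $|K_{k,\rho}-K_{k,2\rho}|\lesssim\theta(\rho)\rho^{1-\varepsilon}$ (a consequence of $u_2\geq c_2 d$) exactly as the paper does after defining $\theta$.

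Your final step (writing a non-divergence equation for $q=u_1/u_2$ and applying interior H\"older estimates together with the decay at the boundary) is a valid alternative to the paper's direct difference estimate for $\tfrac{u_1}{u_2}(x,t)-\tfrac{u_1}{u_2}(y,s)$, but it quietly requires bounds on $\nabla u_2$ in dyadic annuli, which you do get from Corollary~\ref{lipschitzBounds}; it would be worth stating this dependence explicitly, since it is where the loss of $\varepsilon$ in the exponent shows up.
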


	It is established through expanding one solution with respect to the other one, combined with boundary regularity estimates for solutions. With a similar, but a bit more involved approach, we can also prove the higher order analogue of the result - the higher order parabolic boundary Harnack inequality. 
	
	\begin{theorem}\label{regularityOfQuotient}
		Let $\beta> 1$, $\beta\not\in\N$. Let $\Omega\subset\R^{n+1}$ be $C^\beta_p$ in $Q_1$ according to Definition~\ref{assumptionsOnDomain}. Let $L$ be an operator of the form \eqref{oparatorForm} satisfying conditions \eqref{coefficientCondition} with the coefficients $A,b\in C^{\beta-1}_p(\overline{\Omega})$. For $i=1,2$ let $u_i$ be a solution to 
		\begin{equation}\label{equationForTwoSolutions}
			\left\lbrace\begin{array}{rcl l}
			\partial_t u_i + Lu_i & =& f_i &\text{in } \Omega\cap Q_1\\
			u_i& =& 0 &\text{on } \partial\Omega\cap Q_1,\\ 
			\end{array} \right.
		\end{equation}
		with $f_i\in C^{\beta-1}_p(\Omega\cap Q_1)$. Assume that $|u_2|\geq c_2 d$ with $c_2>0$ and $||f_2||_{C^{\beta-1}_p(\Omega\cap Q_1)} + ||u_2||_{L^\infty(\Omega\cap Q_1)}\leq C_2.$
		
		Then we have
		$$\left|\left| \frac{u_1}{u_2}\right|\right| _{C^\beta_p(\overline{\Omega}\cap Q_{1/2})}\leq C \left(||f_1||_{C^{\beta-1}_p(\Omega\cap Q_1)} + ||u_1||_{L^\infty(\Omega\cap Q_1)}\right),$$
		with $C$ depending only on $n,\beta,c_2,C_2,\Omega$, ellipticity constants, $||A||_{C^{\beta-1}_p(\overline{ \Omega})}$ and $||b||_{C^{\beta-1}_p(\overline{ \Omega})}$.
	\end{theorem}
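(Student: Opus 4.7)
My plan is to extend the blow-up and contradiction scheme behind Theorem~\ref{regularityOfQuotient2} to the higher-order setting. First I flatten the boundary through a $C^\beta_p$ change of variables, reducing to the case $\Omega \cap Q_1 = \{x_n > 0\} \cap Q_1$ at the cost of modifying $A,b,f_i$ while keeping their regularity in $C^{\beta-1}_p$. Writing $k := \lfloor \beta \rfloor \geq 1$ and $\alpha := \beta - k \in (0,1)$, the objective is to show $w := u_1/u_2 \in C^\beta_p(\overline{\Omega} \cap Q_{1/2})$ with the stated norm bound.

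\textbf{Reduction to a polynomial approximation at boundary points.} By a Campanato-type characterization, it suffices to exhibit, at each boundary point $(z_0,s_0) \in \partial\Omega \cap Q_{1/2}$, a parabolic polynomial $P_{z_0,s_0}$ of parabolic degree $\leq k$, with coefficients controlled by the right-hand side of the estimate, such that
$$\bigl| u_1(x,t) - u_2(x,t)\, P_{z_0,s_0}(x-z_0, t-s_0) \bigr| \leq C \rho^{\beta+1} \quad \text{for every } (x,t) \in Q_\rho(z_0,s_0) \cap \Omega$$
and every small $\rho$. Since $|u_2| \geq c_2 d$, dividing by $u_2$ converts this into the expected $|w - P_{z_0,s_0}| \leq C\rho^\beta$, and continuous dependence of $P_{z_0,s_0}$ on $(z_0,s_0)$ then delivers the $C^\beta_p$ estimate.

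\textbf{Blow-up.} The polynomial approximation will be proved by contradiction. If it fails, I extract a sequence of admissible data $(\Omega_k, L_k, u_1^{(k)}, u_2^{(k)}, f_i^{(k)})$, boundary points $(z_k, s_k)$, and radii $\rho_k \to 0$ along which the best-fit polynomial $P_k$, defined as the weighted $L^2$-projection minimizing $\|u_1^{(k)} - u_2^{(k)} P\|$ over parabolic polynomials of degree $\leq k$, produces a deficit $M_k := \rho_k^{-(\beta+1)} \|u_1^{(k)} - u_2^{(k)} P_k\|_{L^\infty(Q_{\rho_k})} \to \infty$. I rescale
$$v_k(x,t) := \frac{u_1^{(k)}(z_k + \rho_k x, s_k + \rho_k^2 t) - u_2^{(k)}(z_k+\rho_k x, s_k + \rho_k^2 t)\, P_k(\rho_k x, \rho_k^2 t)}{M_k \rho_k^{\beta+1}},$$
so that $\|v_k\|_{L^\infty(Q_1)} \sim 1$ while $v_k$ inherits orthogonality against $u_2^{(k)}$ times every parabolic polynomial of degree $\leq k$. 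Applying the $C^{1-\varepsilon}_p$ estimate from Theorem~\ref{regularityOfQuotient2} to the quotient $v_k/u_2^{(k)}$ and combining it with standard boundary parabolic estimates supplies the equicontinuity up to $\{x_n = 0\}$ needed to pass (along a subsequence) to a locally uniform limit $v_\infty$ on the half-space $\{x_n > 0\}$.

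\textbf{Liouville and closure.} The limit $v_\infty$ solves a constant-coefficient parabolic equation in the half-space with zero Dirichlet data, satisfies the growth bound $|v_\infty(x,t)| \leq C(|x|+|t|^{1/2})^{\beta+1}$, and inherits in the limit the orthogonality against $x_n \cdot \{\text{parabolic polynomials of degree } \leq k\}$. A half-space Liouville theorem for the model parabolic operator forces $v_\infty(x,t) = x_n\, Q(x,t)$ with $Q$ a parabolic polynomial of degree $\leq k$, and the orthogonality then forces $Q \equiv 0$, contradicting the normalization. The principal obstacle is twofold: calibrating the ``best polynomial subtraction'' so that the inherited orthogonality exactly cancels the space of polynomial modes permitted by the Liouville theorem, and establishing the half-space Liouville statement with the sharp growth threshold $\beta+1$, which I would obtain via iterated even reflection across $\{x_n = 0\}$ combined with an interior Liouville argument for polynomial-growth solutions of the constant-coefficient parabolic equation. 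Once these pieces are in place, the Campanato claim yields the theorem.
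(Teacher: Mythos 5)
Your overall scheme --- blow-up, orthogonality against a polynomial space, compactness, half-space Liouville --- is the same as the paper's, and you correctly flag the two principal difficulties. However, several steps in your sketch either break or are left open, and a couple of your design choices are not harmless.

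\textbf{The boundary flattening is not a free reduction.} You propose to change variables by $\phi(x',x_n,t)=(x',x_n-F(x',t),t)$ and claim the new coefficients remain $C^{\beta-1}_p$. But $\partial_t\phi$ introduces a drift term $-\partial_t F\,\partial_{y_n}$, and for $F\in C^\beta_p$ one only has $\partial_t F\in C^{\beta-2}_p$; for $\beta\in(1,2)$ the time derivative $\partial_t F$ need not exist classically at all (Definition~\ref{parabolicHolderSpaces} controls $F$ in time only at order $\beta/2<1$). So after flattening the hypothesis $b\in C^{\beta-1}_p$ fails. The paper avoids this by never flattening the PDE: it works in the curved domain, uses the regularized distance of Lemma~\ref{generalisedDistance} and the barriers of Lemmas~\ref{barrier1}--\ref{barrier2}, and only obtains a flat half-space in the blow-up limit.

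\textbf{Several pieces of the compactness/closure are missing or misstated.} (i) You assert the growth bound $\|v_\infty\|_{L^\infty(Q_R)}\leq CR^{\beta+1}$ is ``inherited,'' but this is exactly the delicate point: it requires the telescoping control $|p^{(\alpha)}_{k,\rho}-p^{(\alpha)}_{k,2\rho}|\leq C\theta(\rho)\rho^{\beta-1-|\alpha|_p}$ for the best-fit coefficients, together with the monotone quantity $\theta(r)$ used to select $r_m,k_m$ (cf.\ the proofs of Propositions~\ref{shauderExpansion} and~\ref{harnackExpansion}). (ii) You also gloss over the fact that the rescaled right-hand side $(\partial_t+\tilde L_m)v_m$ does \emph{not} vanish in the limit: it converges to a nontrivial polynomial $P\in\textbf{P}_{\lfloor\beta-1\rfloor,p}$, whose boundedness itself must be proved (this is what the decomposition $v_m=v_{m,1}+v_{m,2}$ and Lemma~\ref{boundednesOfPolynomial} handle). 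Your Liouville statement must therefore cover a polynomial inhomogeneity, not a homogeneous equation. (iii) Obtaining equicontinuity by applying Theorem~\ref{regularityOfQuotient2} to $v_k/u_2^{(k)}$ is an odd detour: $v_k$ solves a problem with a right-hand side you have not shown to be bounded, whereas the paper gets compactness directly from the boundary Lipschitz bound (Corollary~\ref{lipschitzBounds}) once boundedness of the right-hand side is established.

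\textbf{The reflection Liouville is the wrong tool here.} For zero Dirichlet data the appropriate reflection is \emph{odd}, not even. Even so, odd reflection of $v$ produces a solution of the reflected equation only if the operator has no cross terms $a_{in}\partial_{in}$ (which requires an extra upper-triangular change of variables), and more importantly, the odd extension of the polynomial right-hand side $P$ across $\{x_n=0\}$ is in general not a polynomial and not even $C^1$. Note that Proposition~\ref{Liouville} in the paper avoids reflection entirely: it combines the boundary Lipschitz estimate (Corollary~\ref{lipschitzBounds}) with iterated finite differences in $(x',t)$ to reduce the growth exponent, and then solves a one-dimensional ODE in $x_n$ for the extremal coefficients. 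This is the route that actually closes the argument; you will want to replace your reflection step by something of this kind.

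Finally, two smaller points. The paper's blow-up subtracts $p^{(0)}u_2+p^{(1)}d$ rather than $Pu_2$ directly --- the conversion uses Proposition~\ref{shauderExpansion} ($u_2=p_2 d+v_2$) and exploits the cancellation $|p^{(1)}v_2|\lesssim|x|^{\beta+1}$; the point is that the $d$-term has a clean polynomial Taylor structure under rescaling, whereas $u_2$ itself is only $C^\beta_p$. And going from the boundary expansion to the full $C^\beta_p$ norm of $u_1/u_2$ is not a one-line Campanato argument: one needs the $C^\beta_p$ regularity of $u_2$ up to the boundary (Corollary~\ref{boundaryRegularity}), a Leibniz expansion of $\partial^\gamma(u_1/u_2)$, and the cone-covering Lemma~\ref{fullRegularityFromCylinders}; your sketch should acknowledge this machinery.
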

	
	Theorem \ref{regularityOfQuotient} is already known under a bit stricter assumptions, see \cite{BG16}. We provide a new, different proof, which relaxes the assumption on regularity of the domain from  $C^{1,\alpha}$ in space and time, to $C^{1,\alpha}_p$.

	One motivation for studying such equations comes from the parabolic obstacle problem, where we look for a function $v\colon Q_1 \to\R$ solving
	\begin{equation}\label{theParabolicObstacleProblem}
	\partial_t v - \Delta v = -f\chi_{\{v>0\}},\quad v\geq 0,
	\end{equation}
	for some function $f$ depending only on $x$,
	together with some boundary data on $\partial_p Q_1 = \partial Q_1 \backslash( B_1\times \{1\}).$ Of particular interest is the so called \textit{contact set} $K = \{v=0\},$ and its boundary $\partial K,$ called the \textit{free boundary.} If we denote $\Omega = \{v>0\}$, then in particular the solution $v$ solves
	$$\left\lbrace\begin{array}{rcll}
	\partial_t v-\Delta v &=& -f &\text{in }\Omega\cap Q_1\\
	v&=&0&\text{on }\partial\Omega\cap Q_1.
	\end{array}\right.$$

	Theorem \ref{regularityOfQuotient2} and Theorem \ref{regularityOfQuotient} give a simple proof of the higher regularity of the free boundary in the parabolic obstacle problem, without making use of a hodograph transform as in \cite[Theorem 3]{KN77}. 
	Analogous boundary Harnack inequalities give rise to the bootstrap argument, which yields the higher order regularity of the free boundaries in similar obstacle problems. Some examples are \cite{DS15}, where they establish it in the classical case, \cite{JN18} for the case when the operator is the fractional Laplacian and \cite{BGZ17} for the parabolic thin obstacle problem.
	
	\begin{corollary}\label{infiniteRegularityOfFreeBoundary}
		Let $v\colon Q_1\to \R$ solve the parabolic obstacle problem
		\eqref{theParabolicObstacleProblem}
		with $f\geq c_0>0$, $f\in C^{\theta}(B_1)$, for some $\theta>1$.
		Assume that $(0,0)\in\partial\{v>0\}$, that $\partial\{v>0\}$ is $C^{1}_p$ in $Q_1$ in the sense of Definition \ref{assumptionsOnDomain}. Then $\partial\{v>0\}$ is $C^{\theta+1}_p$ in $Q_{1/2}$.
		
		In particular, if $f\in C^\infty(B_1),$ then the free boundary is $C^\infty$ near the origin in space and time.
	\end{corollary}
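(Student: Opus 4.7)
The plan is a bootstrap built on the two boundary Harnack theorems above, combined with the standard observation that derivatives of the free boundary graph are boundary traces of quotients of partial derivatives of $v$. Write $\Omega = \{v>0\}$ and $\partial\Omega\cap Q_1 = \{x_n = F(x',t)\}$ as in Definition~\ref{assumptionsOnDomain}. By the standard theory of the parabolic obstacle problem, $v$ is $C^{1,1}$ in space with bounded $\partial_t v$, and $v\equiv 0$ on the coincidence set, so $\nabla v = 0$ and $\partial_t v = 0$ on $\partial\Omega$. Each of $\partial_i v$ ($i=1,\dots,n$) and $\partial_t v$ solves the heat equation in $\Omega$, with right-hand sides $-\partial_i f$ and $0$, respectively, and vanishes on $\partial\Omega\cap Q_1$. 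Differentiating the identity $v(x',F(x',t),t)\equiv 0$ on $\partial\Omega$ gives, for $i=1,\dots,n-1$,
$$\partial_i F = -\frac{\partial_i v}{\partial_n v}\bigg|_{\partial\Omega},\qquad \partial_t F = -\frac{\partial_t v}{\partial_n v}\bigg|_{\partial\Omega}.$$
Since the free boundary is $C^1_p$ and passes through the origin with inward normal $e_n$, the only admissible blow-up of $v$ at $0$ is $\tfrac12 f(0)(x_n)_+^2$; together with nondegeneracy and the $C^{1,1}$ bound this yields $\partial_n v(x,t)\geq c\, d(x,t)$ in a parabolic neighbourhood of $0$, after possibly shrinking $Q_1$.

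\textbf{Base step.} Apply Theorem~\ref{regularityOfQuotient2} with $u_2=\partial_n v$ and $u_1\in\{\partial_1 v,\dots,\partial_{n-1}v,\partial_t v\}$. All hypotheses are met: the domain is $C^1_p$, the operator is $\partial_t-\Delta$, and the right-hand sides are bounded (since $\theta>1$ gives $\nabla f\in L^\infty$). The conclusion is that each quotient lies in $C^{1-\varepsilon}_p(\overline\Omega\cap Q_{1/2})$ for every $\varepsilon\in(0,1)$. Evaluating on $\partial\Omega$ and using the identities above yields $\nabla_{x'}F,\partial_t F\in C^{1-\varepsilon}_p$, hence $F\in C^{2-\varepsilon}_p$.

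\textbf{Bootstrap.} Assume inductively that $F\in C^{\beta}_p$ with $1<\beta\leq\theta$, $\beta\notin\N$. Then $\Omega$ is $C^{\beta}_p$ and $\partial_i f\in C^{\theta-1}_p\subset C^{\beta-1}_p$, while $\partial_t f=0$. Theorem~\ref{regularityOfQuotient} applied to $u_2=\partial_n v$ and $u_1\in\{\partial_i v,\partial_t v\}$ with parameter $\beta$ gives that the relevant quotients lie in $C^{\beta}_p(\overline\Omega\cap Q_{r'})$, hence $\partial_i F,\partial_t F\in C^{\beta}_p$ on the boundary, and so $F\in C^{\beta+1}_p$. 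Iterating, each step gains one derivative on $F$ (choosing the exponent slightly below the integers along the way); finitely many iterations reach $F\in C^{\theta+1}_p(Q_{1/2})$. The $C^\infty$ conclusion follows by letting $\theta\to\infty$.

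\textbf{Main obstacles.} The two nontrivial ingredients are (i) establishing the quantitative lower bound $\partial_n v\geq c\,d$ near the origin starting only from $C^1_p$ regularity of the free boundary, which requires a blow-up classification at regular points together with nondegeneracy; and (ii) managing the iteration so that the exponent $\beta$ stays non-integer and below $\theta+1$, and so that the derivatives of the graph identity can be taken (which is legitimate once the previous bootstrap step is in place). Once these are handled, lifting Hölder regularity of $\nabla_{x',t}F$ on $\partial\Omega$ to the same regularity of $F$ is immediate from the definition of $C^\beta_p$.
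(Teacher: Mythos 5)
Your proof is correct and follows essentially the same route as the paper: use the obstacle-problem regularity to see that $\partial_i v,\partial_t v$ solve a parabolic equation in $\Omega$ vanishing on $\partial\Omega$ with $\partial_n v\geq c\,d$; apply Theorem~\ref{regularityOfQuotient2} as the base step to get $C^{1-\varepsilon}_p$ quotients and hence $F\in C^{2-\varepsilon}_p$; then bootstrap with Theorem~\ref{regularityOfQuotient}, gaining one derivative per step until the regularity of $f$ is exhausted.

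Two small remarks. First, your formula $\partial_iF=-\partial_iv/\partial_nv|_{\partial\Omega}$ is a $0/0$ expression on the free boundary; you should phrase it as the continuous extension of $\partial_iv/\partial_nv$ from $\Omega$ to $\partial\Omega$, obtained either by passing through the level sets $\{v=t\}$ as $t\downarrow0$ or, as the paper does, by writing the unit normal of $\{v=t\}$ in terms of the quotients and letting $t\downarrow0$. Second, your justification of $\partial_nv\geq c\,d$ is quite terse; the paper isolates this in a separate lemma (Lemma~\ref{preliminaryResults}) and proves it by a uniform-in-the-base-point blow-up argument that also yields the needed continuity of $\partial_tv$ up to the free boundary (so that $\partial_tv$ genuinely vanishes on $\partial\Omega$ and is eligible as $u_1$). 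These facts are standard but are exactly the technical backbone that makes the base step legitimate, so a careful write-up should spell them out.
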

	\begin{remark}
		In the classical one-phase Stefan problem the function $f$ in the right-hand side is constantly equal to $1$ and moreover one has $\partial_t v\geq 0$. Then the initial $C^{1,\alpha}$ regularity of the free boundary near regular free boundary points\footnote{Regular points are those at which the blow-up is a $1$-dimensional profile of the form $\frac{1}{2}(x\cdot e)_+^2$; see \cite{LM15} for more details.} 
		is established in \cite{Ca77}. For general right-hand side $f$ in \eqref{theParabolicObstacleProblem}, we refer to \cite[Theorem 1.7]{LM15}. There the provided initial regularity is indeed $C^{1}_p$ only.
	\end{remark}

	Let us stress, that one often does not have $C^{1,\alpha}$ initial regularity. Another example is the free boundary problem for fully non-linear parabolic equations, studied in \cite{FS15}. There they establish the $C^1$ regularity of the free boundary in both space and time. This is why it is important to have Theorem \ref{regularityOfQuotient2} in $C^1_p$ domains in order to establish the higher regularity of free boundaries.
	

	\subsection{Strategy of the proofs}
	The main ingredient for proving Theorem \ref{regularityOfQuotient2} and Theorem \ref{regularityOfQuotient} is establishing an expansion result of the form 
	$$|u_1(x,t)-p(x,t)u_2(x,t)|\leq  C\left(|x|^{\beta+1}+|t|^{\frac{\beta+1}{2}}\right),\quad (x,t)\in Q_1,$$
	for some polynomial $p$ of parabolic degree $\lfloor\beta\rfloor$. To get such an expansion, we perform a contradiction in combination with a blow-up argument. The contradiction is reached with Liouville theorem in half-space for the parabolic setting. We follow the ideas of an analogous result in the non-local elliptic setting \cite{AR20}. In particular our proof is different from the one of De Silva-Savin in \cite{DS15}.
	
	\subsection{Notation}
	For a real number, we denote $\lfloor \cdot\rfloor $ its integer part and $\langle x\rangle = x - \lfloor x\rfloor$. 
	
	The ambient space is $\R^{n+1} = \R^n\times\R,$ where the first $n$ coordinates we denote with $x = (x_1,\ldots,x_n)$ and the last one with $t$. Sometimes we furthermore split $x=(x',x_n)$, for $x'=(x_1,\ldots,x_{n-1})$. 
	Accordingly we use the multi-index notation $\alpha\in\N_0^{n+1}$, $\alpha = (\alpha_1,\ldots,\alpha_n,\alpha_t)$, $|\alpha|=\alpha_1+\ldots+\alpha_n+\alpha_t$, and furthermore
	$$\partial^\alpha = \left(\frac{\partial}{\partial x_1}\right)^{\alpha_1}\circ\ldots\circ \left(\frac{\partial}{\partial x_n}\right)^{\alpha_n}\circ \left(\frac{\partial}{\partial t}\right)^{\alpha_t}.$$
	The gradient operator $\nabla$ and Laplace operator $\Delta$  are taken only in $x$ variables. When we want to use the full coordinate operators, we denote it with a subscript $\nabla_{(x,t)}$.
	
	For $\Omega\subset\R^{n+1}$ we denote the time slits with $\Omega_t = \{x\in\R^n;\text{ }(x,t)\in\Omega \}$ and the distance function to the boundary in space directions only with $d_t(x) = d_x(x,t) = \inf_{z\in\partial\Omega_t}|z-x|.$ Moreover its parabolic boundary $\partial_p\Omega$ is the set $\{(x,t)\in\partial\Omega;\text{ } \forall r>0: \text{ }B_r(x)\times(t-r,t)\not\subset\Omega \}.$
	
	We denote with $Q_r(x_0,t_0)$ the following parabolic cylinder of radius $r$ centred at $(x_0,t_0)$, 
	$$Q_r(x_0,t_0)= B_r(x_0)\times (t_0-r^2,t_0+r^2).$$ 
	When $(x_0,t_0) = (0,0)$, we denote it simply $Q_r$. Sometimes we also denote 
	$$Q_r^+ = Q_r\cap\{x_n>0\}.$$
	
	Finally, $C$ indicates an unspecified constant not depending on any of the relevant quantities, and whose value is allowed to change from line to line. We make use of sub-indices whenever we will want to underline the  dependencies of the	constant.

	\subsection{Organisation of the paper}
	In Section \ref{preliminaries}, we present the definitions of parabolic polynomials, parabolic derivatives and parabolic H\"older spaces, notion of viscosity solutions, and cite the results about them which we use. 
	In Section \ref{boundaryRegularityResults} we establish basic results regarding the behaviour of solutions of \eqref{theEquation} near the boundary.  Well equipped in Section \ref{sectionBoundaryHarnackC1pDomains} we prove the boundary Harnack estimate in $C^1_p$ domains and Theorem \ref{regularityOfQuotient2}. In the following Section \ref{higherOrdaryBoundaryShcauder...} we prove the higher order boundary Schauder estimates and Theorem \ref{regularityOfQuotient}. Finally, the proof of Corollary \ref{infiniteRegularityOfFreeBoundary} is presented in Section \ref{higherRegularityOfFree...}. At the end there is an appendix, where we establish the results regarding the interior regularity of solutions of \eqref{theEquation} needed for our specific setting. Moreover we there prove technical auxiliary results, to lighten the body of the paper.

\section{Preliminaries}	\label{preliminaries}
\subsection{Parabolic H\"older spaces}

	We follow the definitions of the parabolic H\"older spaces from \cite{Li96}. We start with defining parabolic derivatives and parabolic polynomial spaces.
	\begin{definition}
		For a multi-index $\alpha\in \N_0^{n+1}$, we denote $\alpha_x$ the first $n$ components and $\alpha_t$ the last one and define
		$$|\alpha|_p = |\alpha_x| + 2\alpha_t = \sum_{i=1}^n\alpha_i + 2\alpha_t.$$
		Furthermore, we define the parabolic derivatives with
		$$D^k_p = \{\partial^\alpha;\text{ }|\alpha|_p = k\}.$$
		Similarly, we define the parabolic polynomial spaces as follows:
		$$\textbf{P}_{k,p} = \left\{ \sum_{|\alpha|_p\leq k}c_\alpha (x,t)^\alpha;\text{ }c_\alpha \in\R\right\}.$$
		We say that $k$ is the parabolic degree of polynomial $p$, if $k$ is the least integer so that $p\in\textbf{P}_{k,p}.$
		For a polynomial $p = \sum_\alpha c_\alpha  (x,t)^\alpha$, we denote
		$$||p|| = \sum_\alpha|c_\alpha|.$$ 
	\end{definition}

	Next we define parabolic H\"older seminorms and spaces.

	\begin{definition}\label{parabolicHolderSpaces}
		Let $\Omega$ be an open subset of $\R^{n+1}$. For $\alpha\in(0,1]$ we define the parabolic H\"older seminorm of order $\alpha$ as follows
		$$\left[ u\right]_{C^\alpha_p(\Omega)}= \sup_{(x,t),(y,s)\in\Omega}\frac{|u(x,t)-u(y,s)|}{|x-y|^\alpha+|t-s|^{\frac{\alpha}{2}}},$$
		and
		$$\left[ u\right]_{C^{\alpha}_t(\Omega)} = \sup_{(x,t),(x,s)\in\Omega}\frac{|u(x,t)-u(x,s)|}{|t-s|^{\alpha}}.$$
		If $\alpha\in(1,2]$, we set
		\begin{align*}
			\left[ u\right]_{C^\alpha_p(\Omega)}&=  \left[ \nabla u\right]_{C^{\alpha-1}_p(\Omega)}+\left[ u\right]_{C^{\frac{\alpha}{2}}_t(\Omega)}.
		\end{align*}
		For bigger numbers $\alpha>2$, we set
		$$\left[ u\right]_{C^\alpha_p(\Omega)}  = \left[ \nabla u\right]_{C^{\alpha-1}_p(\Omega)}+\left[\partial_t u\right]_{C^{\alpha-2}_p(\Omega)}.$$
		When $\alpha\not\in\N$ we say that $u\in C^\alpha_p(\Omega)$, when $\left[ u\right]_{C^\alpha_p(\Omega)}<\infty$, and define 
		$$||u||_{C^\alpha_p(\Omega)} = \sum_{k\leq \lfloor\alpha\rfloor} ||D^k_p u||_{L^\infty(\Omega)} + 	\left[ u\right]_{C^\alpha_p(\Omega)}.$$
		
		If $\alpha\in\N$, we say that $u\in C^\alpha_p(\Omega)$, if there exists a modulus of continuity $\omega\colon \left[ 0, \infty\right) \to \left[ 0,\infty\right) $ -- a continuous, increasing function with $\omega(0)=0$ -- so that for all $(x,t),(y,s)\in\Omega$
		$$|D^\alpha_p u(x,t) - D^\alpha_p u(y,s)|\leq \omega(|x-y|+|t-s|^{\frac{1}{2}}),$$
		and
		$$|D^{\alpha-1}_p u(x,t) - D^{\alpha-1}_p u(x,s)|\leq |t-s|^{\frac{1}{2}}\omega(|t-s|^{\frac{1}{2}}).$$
		We set
		$$||u||_{C^\alpha_p(\Omega)} = \sum_{k\leq \alpha} ||D^k_p u||_{L^\infty(\Omega)} .$$
		Moreover, we say $u\in C^{\alpha-1,1}_p(\Omega)$ if 
		$\left[ u\right]_{C^\alpha_p(\Omega)}<\infty$
		and we set
		$$||u||_{C^{\alpha-1,1}_p(\Omega)} = \sum_{k\leq \alpha-1} ||D^k_p u||_{L^\infty(\Omega)} +\left[ u\right]_{C^\alpha_p(\Omega)}  .$$
	\end{definition} 
    
    \begin{remark}
    	Breaking down the definition of the parabolic H\"older seminorms, we get that
    	$$\left[ u\right]_{C^\alpha_p(\Omega)}  = \left[ D^{\lfloor\alpha\rfloor}_p u\right]_{C^{\langle\alpha\rangle}_p(\Omega)}+\left[D^{\lfloor\alpha\rfloor-1}_p u\right]_{C^{\frac{\langle\alpha\rangle+1}{2}}_t(\Omega)}.$$
    	
    	Notice that the seminorms are compatible with parabolic scaling. If $u_r(x,t) = u(rx, r^2t)$, then 
    	$$\left[ u_r\right]_{C^\alpha_p(Q_1)} = r^\alpha \left[ u\right]_{C^\alpha_p(Q_r)}.$$
    	
    	
    	We want to remark as well, that for every power $\beta>0$, we have
    	$$ C^{-1}\left(|x|+|t|^{\frac{1}{2}}\right)^\beta\leq |x|^\beta + |t|^{\frac{\beta}{2}}\leq C\left(|x|+|t|^{\frac{1}{2}}\right)^\beta.$$
    \end{remark}    

\subsection{Viscosity solutions} 
	For the notion of solutions of the equation \eqref{theEquation}
	we use the viscosity solutions.
	
	\begin{definition}
		Let $\Omega\subset\R^{n+1}$ be an open set. We say that $u\in C(\overline{\Omega})$ is a viscosity sub-solution of 
		$$\partial_t u + Lu = f$$	
	 	in $\Omega$, if for any function $\phi\in C^2_p(\Omega)$ touching $u$ from below at some point $(x_0,t_0)\in\Omega$ it holds
		$$\partial_t\phi(x_0,t_0) + L\phi(x_0,t_0)\leq f(x_0,t_0).$$
		Analogously we define viscosity super-solutions; the opposite inequality has to hold for $C^2_p(\Omega)$ functions touching $u$ from above.
		
		We say that $u\in C(\overline{\Omega})$ is a viscosity solution of \eqref{theEquation} if it is both viscosity sub-solution and viscosity super-solution.
	\end{definition}
	
	One of the main reasons for using viscosity solutions is, because they are well behaved under uniform convergence. We often use the stability result \cite[Theorem 1.1]{Ba05}. The existence and uniqueness result is established in \cite[Theorem 5.15]{Li96}, while the interior regularity results are provided in \cite[Theorem 2]{PS19}, when the right-hand side is bounded only, and in \cite[Theorem 5.9]{Li96}, when the right-hand side is H\"older continuous.

\section{Basic boundary regularity results}\label{boundaryRegularityResults}
	Since solutions of \eqref{theEquation} satisfy the comparison principle (see for example \cite[Lemma 2.1]{Li96}), a first key to get estimates for solutions near the boundary is constructing suitable barriers. 
	
	Let us start with providing the existence of a so called \textit{generalised distance} function. It is comparable to the parabolic distance to the boundary, but moreover satisfies some additional interior regularity properties, which are very important later on. The parabolic distance at a point $(x,t)\in\Omega$ is given as $$\operatorname{dist}_p((x,t),\partial\Omega) = \inf_{(z,s)\in\partial\Omega}\left(|x-z|+|t-s|^{\frac{1}{2}}\right).$$
	
	\begin{lemma}\label{generalisedDistance}
		Let $\beta\in \left[ 1,2 \right) $ and let $\Omega\subset \R^{n+1}$ be $C^\beta_p$ in $Q_1$ in the sense of Definition \ref{assumptionsOnDomain}. Then there exists a function $d\colon \Omega\cap Q_1\to\R$, satisfying
		$$C^{-1}\operatorname{dist}_p(\cdot, \partial\Omega)\leq d\leq C \operatorname{dist}_p(\cdot, \partial\Omega),\quad d\in C^\beta_p(Q_1)\cap C^2_p(\Omega\cap Q_1),\quad |\partial_t d|+|D^2 d|\leq Cd^{\beta-2},$$
		$$ \frac{2}{3}\leq|\nabla d|\leq C.$$
		Moreover, when $\beta = 1$, we have
		$$|\partial_t d|+|D^2 d|\leq Cd^{-1}\omega(d),$$
		where $\omega$ is the modolus of continuity  of $\nabla F$. The constant $C$ depends only on $\beta$, $||F||_{C^\beta_p(Q'_1)}$ and $n$.
	\end{lemma}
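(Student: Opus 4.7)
The plan is to adapt the classical Stein regularised distance construction to the parabolic setting. In the local graph representation $\Omega \cap Q_1 = \{x_n > F(x', t)\}$ with $F \in C^\beta_p(Q_1')$, the naive candidate $\delta(x, t) = x_n - F(x', t)$ is already $C^\beta_p$ up to the boundary. Thanks to $\nabla_{x'} F(0, 0) = 0$ and the $C^\beta_p$ bound on $F$, after a harmless shrinkage of $Q_1$ one has $|\nabla \delta| \geq 2/3$ and $\delta \sim \operatorname{dist}_p(\cdot, \partial \Omega)$. What $\delta$ lacks is interior $C^2_p$ regularity with the prescribed blow-up rate of its second derivatives as one approaches $\partial \Omega$.

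To remedy this, fix a standard parabolic mollifier $\phi \in C_c^\infty(B'_1 \times (-1, 1))$ with $\int \phi = 1$, and set $F_\rho(x', t) = \int F(x' - \rho y', t - \rho^2 s) \phi(y', s) \, dy' \, ds$. Standard estimates on mollifications of $C^\beta_p$ functions yield, uniformly in $\rho \in (0, 1)$,
$$|F_\rho - F| \leq C \rho^\beta, \qquad |\nabla_{x'} F_\rho| \leq C, \qquad |D^2_{x'} F_\rho| + |\partial_t F_\rho| \leq C \rho^{\beta - 2},$$
with the last estimate replaced by $C \rho^{-1} \omega(\rho)$ when $\beta = 1$. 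Then I would define
$$d(x, t) = x_n - F_{\eta \delta(x, t)}(x', t)$$
for $\eta > 0$ small enough, to be fixed below, and extend $d$ outside $\Omega$ by an analogous symmetric construction so that $d$ is defined on all of $Q_1$.

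Verification of the listed properties is a chain-rule computation. The bound $|d - \delta| \leq C \eta^\beta \delta^\beta$ gives $d \sim \delta \sim \operatorname{dist}_p$ after possibly shrinking the neighbourhood. Differentiating, $\partial_n d = 1 - \eta \, \partial_\rho F_\rho\big|_{\rho = \eta \delta}$; since $|\partial_\rho F_\rho| \leq C \rho^{\beta - 1}$ is bounded, taking $\eta$ small forces $|\nabla d| \in [2/3, C]$. Differentiating once more produces terms of the form $D^2_{x'} F_\rho$, $\partial_\rho F_\rho \cdot D^2 \delta$ and $\partial^2_\rho F_\rho \cdot (\nabla \delta)^2$ evaluated at $\rho = \eta \delta$, which combined with the mollifier estimates and $\eta \delta \sim \operatorname{dist}_p$ give $|\partial_t d| + |D^2 d| \leq C \delta^{\beta - 2}$, or $C \delta^{-1} \omega(\delta)$ in the borderline case.

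The main obstacle will be verifying the $C^\beta_p$ regularity of $d$ up to $\partial \Omega$: the mollification scale $\eta \delta$ degenerates at the boundary, so the Hölder difference quotient for $d$ between two points $(x, t)$, $(y, s) \in \Omega \cap Q_1$ has to be split into the regimes $|x - y| + |t - s|^{1/2} \gtrsim \delta(x, t)$ and $|x - y| + |t - s|^{1/2} \ll \delta(x, t)$. In the first regime one uses $|F_\rho - F| \leq C \rho^\beta$ to reduce the estimate to the $C^\beta_p$ regularity of $F$ itself; in the second regime one integrates the interior estimates $|D^2 F_\rho|, |\partial_t F_\rho| \leq C \rho^{\beta - 2}$ along the segment between the two points, together with the Lipschitz dependence of $\eta \delta$ on $(x, t)$, to recover the optimal $C^\beta_p$ Hölder quotient. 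The $\beta = 1$ case follows by the same bookkeeping with $\rho^{\beta - 2}$ replaced by $\rho^{-1} \omega(\rho)$.
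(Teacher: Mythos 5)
The paper itself does not prove this lemma; it defers to Lieberman's construction of a parabolic regularised distance ([Li96, Section IV.5] and [Li85, Theorem 1.3]). Your proposal tries to reconstruct that argument, and the overall template (mollify the graph function at a scale proportional to the distance, then check the estimates by the chain rule) is the right one. However, the specific formula you write down contains a genuine gap that breaks the $C^2_p$ interior estimate.

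The problem is the choice of mollification scale $\eta\,\delta(x,t)$ with $\delta(x,t)=x_n-F(x',t)$ the raw, non-regularised distance. Write $\rho(x,t)=\eta\delta(x,t)$ and $d(x,t)=x_n-F_{\rho(x,t)}(x',t)$. Differentiating once in $x_i$ for $i<n$ gives
$$
\partial_i d = -\partial_i F_\rho - \eta\,(\partial_i\delta)\,\partial_\rho F_\rho
            = -\partial_i F_\rho + \eta\,(\partial_i F)\,\partial_\rho F_\rho,
$$
so $\nabla d$ already carries an explicit factor of $\nabla_{x'}F$. Differentiating again in $x_j$ produces a term $\eta\,(\partial_j\partial_i F)\,\partial_\rho F_\rho$, and the analogous computation for $\partial_t d$ produces $\eta\,(\partial_t F)\,\partial_\rho F_\rho$. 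For $F\in C^\beta_p$ with $\beta\in[1,2)$ the quantities $D^2_{x'}F$ and $\partial_t F$ do not exist pointwise, so $d$ is not $C^2_p$ in the interior with this definition; the chain-rule computation in your third paragraph, which you label as a straightforward verification, is exactly where the argument fails. The issue is not a Hölder-exponent bookkeeping matter but a missing regularity of the object being fed into the chain rule.

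The standard fix, and what Lieberman actually does, is to define $d$ implicitly as the solution of the fixed-point equation $d(x,t)=x_n-F_{\eta d(x,t)}(x',t)$ (existence and uniqueness of the solution for small $\eta$ follow from a contraction/implicit-function argument since $|\partial_\rho F_\rho|\le C\rho^{\beta-1}$). With that definition the chain rule closes on itself: differentiating yields a linear system for $\nabla d$ involving only $\nabla_{x'}F_\rho$, $\partial_\rho F_\rho$ and $\nabla d$, which one solves by inverting $1+\eta\partial_\rho F_\rho$; differentiating once more involves only $D^2_{x'}F_\rho$, $\partial_t F_\rho$, $\partial_\rho F_\rho$, $\partial^2_\rho F_\rho$, together with $\nabla d$ and $D^2 d$ themselves, never $D^2 F$ or $\partial_t F$. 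The mollifier estimates you listed are all correct and are exactly what make this implicit version work, but they must be applied to a self-consistent $d$ rather than to $\delta$. Your two-regime strategy for the boundary $C^\beta_p$ estimate is also the right idea and survives the change of definition, so once you replace the explicit $\eta\delta$ scale by the implicit one the rest of your outline goes through.
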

	
	\begin{proof}
		The construction is done in \cite[Section IV.5]{Li96}. The second part ($\beta = 1$) is established analogously as \cite[Theorem 1.3]{Li85}.
	\end{proof}
	
	With aid of the generalised distance we construct a barrier in $C^1_p$ and $C^{1,\alpha}_p$ domains. The barrier is a non-negative function, vanishing at the specific point of the boundary with positive right-hand side. When the boundary is $C^1_p$ only, the obtained barrier vanishes a bit slower than linearly.
	
	\begin{lemma}\label{barrier1}
		Let $\Omega\subset\R^{n+1}$ be $C^1_p$ in $Q_1$ in the sense of Definition \ref{assumptionsOnDomain}. Let $L$ be an operator of the form \eqref{oparatorForm} satisfying conditions \eqref{coefficientCondition}. Let $\varepsilon\in(0,1)$. Then for every boundary point $(z,s)\in\partial\Omega\cap Q_{1/2}$ there exists a function $\psi_{(z,s)}$ satisfying the following properties:
		$$\left\lbrace\begin{array}{rcl l}
		(\partial_t + L)\phi_{(z,s)} & \geq& d^{-1-\varepsilon}+1 &\text{in } \Omega\cap Q_1\cap \{d<r\}\\
		\phi_{(z,s)} & \geq& 1 &\text{in } \Omega\cap \partial_p\left( Q_1\cap \{d<r\}\right)\\
		\phi_{(z,s)}& \geq& 0 &\text{in } \Omega\cap Q_1\cap \{d<r\}\\ 
		\phi_{(z,s)}(z,s)& =& 0&\\
		\phi_{(z,s)}(z+\xi \nu_x,s)& \leq& Cd^{1-\varepsilon}(z+\xi \nu_x,s)&\text{for } \xi<r.
		\end{array} \right.$$
		The constants $C$ and $r>0$ depend only on $n,\varepsilon,\lambda,\Lambda$, $||F||_{C^{1}_p}$ and the modulus of continuity of $\nabla F$.
	\end{lemma}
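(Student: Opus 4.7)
The plan is to construct the barrier by the classical ansatz
\[
\phi_{(z,s)}(x,t) = A\, d^{1-\varepsilon}(x,t) + B\,\eta(x,t), \qquad \eta(x,t):=|x-z|^2 + (t-s)^2,
\]
using the generalised distance $d$ provided by Lemma~\ref{generalisedDistance}. The first summand is designed to produce the blow-up $d^{-1-\varepsilon}$ in the right-hand side and to control the value of $\phi$ along the normal ray, while $\eta$ is a smooth non-negative perturbation vanishing only at $(z,s)$ which will lift $\phi$ above $1$ on the part of $\partial_p Q_1$ lying in the strip $\{d<r\}$; this part is separated from $(z,s)\in Q_{1/2}$ by a parabolic distance at least $1/2$.

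The main computation is for the first piece. Differentiating and using $L=-\operatorname{tr}(AD^2\cdot)+b\cdot\nabla$ gives
\[
(\partial_t + L)d^{1-\varepsilon} = (1-\varepsilon)\varepsilon\, d^{-1-\varepsilon}\langle A\nabla d,\nabla d\rangle + (1-\varepsilon)\, d^{-\varepsilon}\bigl[\partial_t d-\operatorname{tr}(AD^2d)+b\cdot\nabla d\bigr].
\]
By uniform ellipticity and $|\nabla d|\geq 2/3$, the first term is bounded below by $c_0\varepsilon\, d^{-1-\varepsilon}$. Crucially, because $\Omega$ is only $C^1_p$, the bound from Lemma~\ref{generalisedDistance} is $|\partial_t d|+|D^2 d|\leq C d^{-1}\omega(d)$ (with $\omega$ the modulus of continuity of $\nabla F$), so the error term is at most $C d^{-1-\varepsilon}\omega(d)+Cd^{-\varepsilon}$. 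Choosing $r$ small enough so that $C\omega(r)\leq c_0\varepsilon/2$ absorbs the $\omega(d)$-error into half of the principal term, and shrinking $r$ further so that $d^{-\varepsilon}\leq d^{-1-\varepsilon}$ is dominated too, yields $(\partial_t+L)d^{1-\varepsilon}\geq c_1 d^{-1-\varepsilon}$ in $\{d<r\}$. On the other hand a direct computation gives $|(\partial_t+L)\eta|\leq C_\eta$, so the full ansatz satisfies
\[
(\partial_t+L)\phi_{(z,s)} \geq A c_1\, d^{-1-\varepsilon} - BC_\eta,
\]
and since $d^{-1-\varepsilon}\geq r^{-1-\varepsilon}\geq 1$ inside the strip, taking $A$ large (depending on $B$, $C_\eta$, $r$) yields $(\partial_t+L)\phi_{(z,s)}\geq d^{-1-\varepsilon}+1$.

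It remains to verify the boundary values. Non-negativity and $\phi_{(z,s)}(z,s)=0$ are immediate since both summands are non-negative and vanish at $(z,s)$. On the inner face $\{d=r\}$ one has $\phi\geq A r^{1-\varepsilon}$, which is $\geq 1$ once $A\geq r^{\varepsilon-1}$. On the lateral part $\partial B_1\times[-1,1]$ of $\partial_p Q_1$, we have $|x-z|\geq 1/2$, and on $B_1\times\{-1\}$ we have $|t-s|\geq 1/2$, since $(z,s)\in Q_{1/2}$; in both cases $\eta\geq 1/4$, giving $\phi\geq B/4\geq 1$ provided $B\geq 4$. Finally, along the spatial normal ray, $d(z+\xi\nu_x,s)\geq c\,\xi$ (a consequence of the $C^1_p$ regularity), so $\eta(z+\xi\nu_x,s)=\xi^2\leq C d^{2}(z+\xi\nu_x,s)\leq C d^{1-\varepsilon}(z+\xi\nu_x,s)$ for $\xi<1$, which gives the desired growth $\phi_{(z,s)}(z+\xi\nu_x,s)\leq C d^{1-\varepsilon}(z+\xi\nu_x,s)$.

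The only genuinely delicate step is the absorption of $Cd^{-1-\varepsilon}\omega(d)$ by $c_0\varepsilon d^{-1-\varepsilon}$: it works exactly because $\omega(d)\to 0$ as $d\to 0$, a property that is specific to $C^1_p$ domains (in the merely Lipschitz case one would only have $|D^2 d|\lesssim d^{-1}$, and no such strip $\{d<r\}$ would exist). This is what forces the smallness of $r$ to depend on the modulus of continuity of $\nabla F$, exactly as stated.
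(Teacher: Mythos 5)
Your proof is correct and follows essentially the same route as the paper: both take the ansatz $d^{1-\varepsilon}$ plus a quadratic perturbation centred at $(z,s)$, and both absorb the $d^{-1-\varepsilon}\omega(d)$ error using the $C^1_p$ bound $|\partial_t d|+|D^2 d|\leq Cd^{-1}\omega(d)$ from Lemma~\ref{generalisedDistance}, shrinking $r$ so that $\omega(r)$ and $r$ are small relative to $\varepsilon$.
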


	\begin{proof}
		Set first $\varphi = d^{1-\varepsilon}$. We compute
		\begin{align*}
			(\partial_t+L)\varphi& =    \varepsilon(1-\varepsilon) \nabla d^T A \nabla d\cdot d^{-1-\varepsilon}+ (1-\varepsilon)d^{-\varepsilon}(\partial_t+L)d \\
			&\geq   \varepsilon(1-\varepsilon) \frac{4\lambda}{9} d^{-1-\varepsilon} - C\Lambda(1-\varepsilon)d^{-1-\varepsilon}\omega(d) - C\Lambda (1-\varepsilon)d^{-\varepsilon},
		\end{align*}
		thanks to Lemma \ref{generalisedDistance}. If $(x,t)\in \{d<r\},$ for $r$ so that $\max(\omega(r),r)<\varepsilon\frac{\lambda}{9C\Lambda}$, we get
		$$(\partial_t+L)\varphi \geq  \varepsilon(1-\varepsilon) \frac{2\lambda}{9} d^{-1-\varepsilon}.$$
		The barrier can be taken as $\phi_{(z,s)}(x,t) = D\left(\varphi(x,t) + E(|x-z|^2+|t-s|^2)\right)$, for small enough $E>0$ and big enough $D$.
	\end{proof}
	
	In the case when the domain is $C^{1,\alpha}_p$, we can improve the barrier so that it grows linearly near the boundary.
	
	\begin{lemma}\label{barrier2}
		 Let $\beta \in (1,2)$ and let $\Omega\subset\R^{n+1}$ be $C^\beta_p$ in $Q_1$ in the sense of Definition \ref{assumptionsOnDomain}. Let $L$ be an operator of the form \eqref{oparatorForm} satisfying conditions \eqref{coefficientCondition}. Then for every boundary point $(z,s)\in\partial\Omega\cap Q_{1/2}$ there exists a function $\psi_{(z,s)}$ satisfying the following properties:
		 $$\left\lbrace\begin{array}{rcl l}
		 (\partial_t + L)\phi_{(z,s)} & \geq& d^{\beta-2}+1 &\text{in } \Omega\cap Q_1\cap \{d<r\}\\
		 \phi_{(z,s)} & \geq& 1 &\text{in } \Omega\cap \partial_p\left( Q_1\cap \{d<r\}\right)\\
		 \phi_{(z,s)}& \geq& 0 &\text{in } \Omega\cap Q_1\cap \{d<r\}\\ 
		 \phi_{(z,s)}(z,s)& =& 0&\\
		 \phi_{(z,s)}(z+\xi \nu_x,s)& \leq& Cd(z+\xi \nu_x,s)&\text{for } \xi<r.
		 \end{array} \right.$$
		 The constants $C$ and $r>0$ depend only on $n,\beta,\lambda,\Lambda$ and $||F||_{C^{\beta}_p}.$
	\end{lemma}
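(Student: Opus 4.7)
The plan is to imitate the construction in the proof of Lemma~\ref{barrier1}, but to replace the sub-linear ansatz $d^{1-\varepsilon}$ with one that exploits the extra regularity of the generalised distance when $\beta>1$ (namely, the clean power bound $|\partial_t d| + |D^2 d|\le C d^{\beta-2}$) in order to produce \emph{linear} vanishing at the boundary. Specifically, I would set
\[
\varphi(x,t) = d(x,t) - M\, d(x,t)^{\beta},
\]
with $M>0$ large, to be chosen. In the region $\{d<r\}$ with $M\beta r^{\beta-1}\le \tfrac12$ one has $\tfrac12 d\le \varphi\le d$, so $\varphi$ vanishes linearly at $(z,s)$ and stays non-negative.

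A direct computation, using $\partial_{ij}(d^\beta) = \beta(\beta-1) d^{\beta-2}\partial_i d\,\partial_j d + \beta d^{\beta-1}\partial_{ij}d$, yields
\[
(\partial_t + L)\varphi = \bigl[1 - M\beta d^{\beta-1}\bigr](\partial_t + L)d \;+\; M\beta(\beta-1)\,d^{\beta-2}\, a_{ij}\partial_i d\,\partial_j d.
\]
The second term is positive: by uniform ellipticity and $|\nabla d|\ge 2/3$ from Lemma~\ref{generalisedDistance}, it is bounded below by $\tfrac{4M\beta(\beta-1)\lambda}{9}\, d^{\beta-2}$. The first term is controlled using $|\partial_t d|+|D^2 d|\le C d^{\beta-2}$ and $|\nabla d|\le C$, giving $|(\partial_t+L)d|\le C(d^{\beta-2}+1)$, and hence $\bigl|[1-M\beta d^{\beta-1}](\partial_t+L)d\bigr|\le \tfrac{3}{2}C(d^{\beta-2}+1)$ once $M\beta r^{\beta-1}\le 1/2$. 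Choosing $M$ large depending on $\beta,\lambda,\Lambda,n,\|F\|_{C^\beta_p}$, one obtains
\[
(\partial_t + L)\varphi \;\ge\; 2\,d^{\beta-2} - C \qquad \text{in } \Omega\cap Q_1\cap\{d<r\}.
\]

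Next I would define
\[
\phi_{(z,s)}(x,t) = D\Bigl(\varphi(x,t) + E\bigl(|x-z|^2 + (t-s)^2\bigr)\Bigr),
\]
with $E,D$ to be chosen. Since $(\partial_t + L)(|x-z|^2+(t-s)^2)\ge -C_1$ for some $C_1=C_1(n,\Lambda)$, fixing $E=1$ and shrinking $r$ so that $2r^{\beta-2}\ge C+C_1$ gives $(\partial_t + L)\phi_{(z,s)}\ge d^{\beta-2}+1$ throughout, for every $D\ge 1$. The parabolic-boundary inequality $\phi_{(z,s)}\ge 1$ on $\Omega\cap \partial_p(Q_1\cap\{d<r\})$ splits into two pieces: on $\{d=r\}$ one has $\varphi\ge r/2$ and hence $\phi_{(z,s)}\ge Dr/2$; on $\partial Q_1\cap\{d<r\}$, since $(z,s)\in Q_{1/2}$, one has $|x-z|^2+(t-s)^2\ge c_0>0$ and hence $\phi_{(z,s)}\ge Dc_0$. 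Both are met for $D$ large. The identity $\phi_{(z,s)}(z,s)=0$ and non-negativity are immediate. For the linear growth along the inward normal, $|\nabla d|\ge 2/3$ yields $d(z+\xi\nu_x,s)\ge \xi/2$ for small $\xi$, so $E\xi^2\le 4E d^2\le 4E d$, whence $\phi_{(z,s)}(z+\xi\nu_x,s)\le D(1+4E)\, d(z+\xi\nu_x,s)\le Cd$.

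The main obstacle is to keep the order of choices straight: $M$ must be fixed first from the PDE balance (it depends only on $\beta,\lambda,\Lambda,n$), then $r$ taken small so that both $M\beta r^{\beta-1}\le 1/2$ and the additive constant in $(\partial_t+L)\varphi\ge 2d^{\beta-2}-C$ is absorbed by the $d^{\beta-2}$ blow-up on $\{d<r\}$, and only then $D$ chosen large (depending on $r$ and $c_0$) to meet the boundary bound. The structural reason the ansatz works is that $\partial_{ij}(d^\beta)$ contains the positive quadratic-form contribution $\beta(\beta-1)d^{\beta-2}\partial_i d\,\partial_j d$ precisely when $\beta>1$; subtracting $Md^\beta$ from $d$ therefore produces a positive $d^{\beta-2}$ term in $(\partial_t+L)\varphi$ that was unavailable in Lemma~\ref{barrier1} — exactly the gain afforded by the stronger $C^\beta_p$ regularity of $\Omega$.
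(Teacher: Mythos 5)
Your proof is correct and follows essentially the same path as the paper: the same ansatz $\varphi = d - Md^\beta$, the same computation of $(\partial_t+L)\varphi$ exploiting the positive quadratic-form term $\beta(\beta-1)d^{\beta-2}\nabla d^T A\nabla d$, and the same final barrier $\phi_{(z,s)}=D(\varphi+E(|x-z|^2+(t-s)^2))$. The only (cosmetic) difference is in the bookkeeping of the last step: the paper keeps the cushion $(\partial_t+L)\varphi\ge\frac12(d^{\beta-2}+1)$ and chooses $E$ small so the added quadratic term does not spoil it, then $D$ large, whereas you fix $E=1$ and instead shrink $r$ so that the blow-up of $d^{\beta-2}$ absorbs the extra constant --- both orderings work, though your parenthetical ``for every $D\ge1$'' should really be ``for $D$ large enough.''
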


	\begin{proof}
		Set $\varphi = d - M d^\beta$, for some positive constant $M$ specified later. We compute 
		$$(\partial_t + L)\varphi = (\partial_t + L)d (1 - M\beta d^{\beta-1}) + M\beta(\beta-1) d^{\beta-2} \nabla d^T A \nabla d.$$
		Hence we can estimate
		\begin{align*}
			(\partial_t + L)\varphi \geq & -|(\partial_t + L)d| (1 + M\beta d^{\beta-1}) + M\beta(\beta-1)d^{\beta-2} \nabla d^T A \nabla d \\ 
			\geq& -Cd^{\beta-2}(1+2Md^{\beta-1}) + \frac{4M}{9}\beta(\beta-1)d^{\beta-2}
		\end{align*}
		Choosing $M$ so that $\frac{4M}{9}\beta(\beta-1)>\max\{3C, 1\}$, and then $r$ so that $2Md^{\beta-1}< 1$, we get that for $x\in\{d<r\}\cap \{d<1\}$
		$$(\partial_t + L)\varphi \geq d^{\beta-2}\geq \frac{1}{2}(d^{\beta-2} + 1).$$
		We now set $\phi_{(z,s)}(x,t) = D\left(\varphi(x,t) + E((t-s)^2 + |z-x|^2)\right)$. Constant $E$ has to be small enough so that $D^{-1}(\partial_t + L)\phi_{(z,s)}\geq \frac{1}{4}(d^{\beta-2}+1)$, and then $D$ is chosen big enough, so that $(\partial_t + L)\phi_{(z,s)}\geq d^{\beta-2} + 1$ and that $\phi_{(z,s)}\geq 1$ on $\Omega\cap \partial_p \left(\{d<r\}\cap Q_1 \right)$.
		Since $\varphi(z+\xi \nu_x,s)\leq Cd(z+\xi\nu_x,s)$, provided that $r$ is small enough, the same inequality holds for $\phi_{(z,s)}$ as well.
	\end{proof}

	The existence of such barriers straight forward imply that solutions to parabolic equations grow near the boundary at most linearly (almost linearly if the boundary is $C^1_p$).
	
	\begin{corollary}
		Let $\beta\in [ 1,2).$ Let $\Omega\subset\R^{n+1}$ be $C^\beta_p$ in $Q_1$ in the sense of Definition \ref{assumptionsOnDomain} and let $L$ be an operator of the form \eqref{oparatorForm} satisfying conditions \eqref{coefficientCondition}. Suppose $u$ is a solution to 
		$$\left\lbrace\begin{array}{rcl l}
		\partial_t u + Lu & =& f &\text{in } \Omega\cap Q_1\\
		u& =& 0 &\text{on } \partial\Omega\cap Q_1.\\ 
		\end{array} \right.$$
		If $\beta = 1$,then for every $\varepsilon>0$ have 
		$$|u|\leq C \big (||fd||_{L^\infty(\Omega\cap Q_1)}+||u||_{L^\infty(\Omega\cap Q_1)}\big )d^{1-\varepsilon}\quad \text{in } \Omega\cap Q_{1/2},$$
		with $C$ depending only on $n,\varepsilon,\lambda,\Lambda,$ and $||F||_{C^{1}_p}.$
		 
		When $\beta>1$, we have 
		$$|u|\leq C \big (||fd^{2-\beta}||_{L^\infty(\Omega\cap Q_1)}+||u||_{L^\infty(\Omega\cap Q_1)}\big )d\quad \text{in } \Omega\cap Q_{1/2},$$
		with $C$ depending only on $n,\beta,\lambda,\Lambda,$ and $||F||_{C^{\beta}_p}.$
	\end{corollary}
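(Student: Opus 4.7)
The plan is a direct comparison argument using the barriers of Lemma \ref{barrier1} (when $\beta=1$) and Lemma \ref{barrier2} (when $\beta\in(1,2)$). Since the two cases are entirely parallel, I focus on $\beta=1$. The idea is: for each point $(x_0,t_0)\in\Omega\cap Q_{1/2}$ with small parabolic distance to $\partial\Omega$, I dominate $\pm u$ by a multiple $M\phi_{(z_0,t_0)}$ of the barrier centered at the spatial projection $(z_0,t_0)$ of $(x_0,t_0)$ onto $\partial\Omega$, then read off the pointwise bound from the last property of the barrier.

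First I dispose of the case $d(x_0,t_0)\geq r$, with $r$ the threshold from Lemma \ref{barrier1}, where the trivial bound $|u(x_0,t_0)|\leq\|u\|_{L^\infty}\leq r^{\varepsilon-1}\|u\|_{L^\infty}d(x_0,t_0)^{1-\varepsilon}$ already works. Otherwise, using that $\partial\Omega$ is a $C^1_p$ graph, for $r$ small the unique spatial projection $z_0$ of $x_0$ onto $\partial\Omega_{t_0}$ is well defined and satisfies $x_0=z_0+\xi_0\nu_x(z_0,t_0)$ with $\xi_0=d_x(x_0,t_0)$ comparable to $d(x_0,t_0)$, which is the form required by the final line of Lemma \ref{barrier1}. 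A standard covering/rescaling reduction ensures $(z_0,t_0)\in\partial\Omega\cap Q_{1/2}$ so the barrier is available. Setting $M=C_0\bigl(\|fd\|_{L^\infty(\Omega\cap Q_1)}+\|u\|_{L^\infty(\Omega\cap Q_1)}\bigr)$ for $C_0$ large, the function $M\phi_{(z_0,t_0)}\pm u$ is nonnegative on $\partial_p\bigl(\Omega\cap Q_1\cap\{d<r\}\bigr)$: indeed $u=0$ and $\phi\geq 0$ on $\partial\Omega$, while $\phi\geq 1\geq\|u\|_{L^\infty}/M$ on $\Omega\cap\partial_p(Q_1\cap\{d<r\})$. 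For the interior supersolution inequality
\[
(\partial_t+L)\bigl(M\phi_{(z_0,t_0)}\pm u\bigr)\geq M(d^{-1-\varepsilon}+1)-|f|\geq 0,
\]
the key observation is that since $d\leq 1$ one has $|f|\leq\|fd\|_{L^\infty}\,d^{-1}\leq\|fd\|_{L^\infty}\,d^{-1-\varepsilon}$, so the supersolution property of Lemma \ref{barrier1} exactly absorbs the singular right-hand side. The comparison principle (see e.g.\ \cite[Lemma 2.1]{Li96}) then gives $|u|\leq M\phi_{(z_0,t_0)}$ throughout $\Omega\cap Q_1\cap\{d<r\}$, and evaluating at $(x_0,t_0)$ via the barrier's final estimate yields the desired $|u(x_0,t_0)|\leq CMd(x_0,t_0)^{1-\varepsilon}$.

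The case $\beta\in(1,2)$ runs identically after swapping in Lemma \ref{barrier2}, using $|f|\leq\|fd^{2-\beta}\|_{L^\infty}d^{\beta-2}$ which is absorbed by the new supersolution bound $(\partial_t+L)\phi\geq d^{\beta-2}+1$, and noting that the barrier is now linearly comparable to $d$. The only delicate point is the closest-point projection onto $\partial\Omega_{t_0}$: for a merely $C^1_p$ graph this projection is single-valued only in a thin tubular neighborhood whose size depends on the modulus of continuity $\omega$ of $\nabla F$, so one must take $r$ smaller than a quantity depending on $\omega$ (as well as on $n,\varepsilon,\lambda,\Lambda$) in order to guarantee that $x_0$ does lie on the normal ray from some $z_0$ and so that Lemma \ref{barrier1} applies. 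With that caveat absorbed into the choice of $r$, the rest of the argument is entirely routine.
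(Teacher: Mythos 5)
Your argument is correct and follows essentially the same route as the paper: normalize $u$ and $f$, use the barriers of Lemma~\ref{barrier1} (resp.\ Lemma~\ref{barrier2}) and the comparison principle, and read off the bound from the barrier's final estimate along the normal ray, handling points with $d\geq r$ trivially. Your extra care about the well-posedness of the normal projection is fine but not really needed (for a $C^1$ boundary, any closest point $z_0\in\partial\Omega_{t_0}$ to $x_0$ automatically satisfies $x_0=z_0+d_x(x_0,t_0)\,\nu_x(z_0,t_0)$, uniqueness aside), and the paper leaves these details implicit.
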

	\begin{proof}
		Cases $\beta=1$ and $\beta>1$ are treated in the same way. Let us only prove the statement when $\beta>1.$
		
		Dividing $u$ with $||fd^{2-\beta}||_{L^\infty}+||u||_{L^\infty},$ we can assume that $|u|\leq 1$ and $|f|\leq d^{\beta-2}.$ Take $(x,t)\in \Omega\cap Q_{1/2}.$ If $d(x,t) < r/2$ for the constant $r$ and function $\phi_{(z,t)}$ from Lemma \ref{barrier2}, we apply the comparison principle (\cite[Lemma 2.1]{Li96})
		on $\phi_{(z,t)}\pm u$ and get that $|u(x,t)|\leq Cd(x,t).$ Taking $C$ bigger if necessary, the same holds also if $d_x(x,t) \geq  r/2$. Note that thanks to Definition \ref{assumptionsOnDomain} the function $d$ is also comparable to $d_x$. In case $\beta=1$ Lemma \ref{barrier1} together with comparison principle gives the desired result.
	\end{proof}

	Having bounds on the growth near the boundary, combined with interior estimates quickly give Lipschitz bounds up to the boundary (almost Lipschitz in $C^1_p$ domains).
	
	\begin{corollary}\label{lipschitzBounds}
		For $\beta\in [ 1,2)$ let $\Omega\subset\R^{n+1}$ be $C^\beta_p$ in $Q_1$ in the sense of Definition \ref{assumptionsOnDomain} and let $L$ be an operator of the form \eqref{oparatorForm} satisfying conditions \eqref{coefficientCondition}, with $A\in C^0(\overline{\Omega})$. Let $u$ be a solution to 
		$$\left\lbrace\begin{array}{rcl l}
		\partial_t u + Lu & =& f &\text{in } \Omega\cap Q_1\\
		u& =& 0 &\text{on } \partial\Omega\cap Q_1.\\ 
		\end{array} \right.$$
		
		If $\beta=1$ we have for every $\varepsilon>0$
		$$\left[ u\right]_{C^{1-\varepsilon}_p(\Omega\cap Q_{1/2})} \leq C \big (||fd^{2-\beta}||_{L^\infty(\Omega\cap Q_1)}+||u||_{L^\infty(\Omega\cap Q_1)}\big ),$$
		with $C$ depending only on $n,\varepsilon,\lambda,\Lambda,$ $||F||_{C^{1}_p},$ and the modulus of continuity of $A$.
		
		If $\beta>1$ we have
		$$\left[ u\right]_{C^{1}_p(\Omega\cap Q_{1/2})} \leq C \big (||fd^{2-\beta}||_{L^\infty(\Omega\cap Q_1)}+||u||_{L^\infty(\Omega\cap Q_1)}\big ),$$
		with $C$ depending only on $n,\beta,\lambda,\Lambda,$  $||F||_{C^{\beta}_p},$ and the modulus of continuity of $A$.
	\end{corollary}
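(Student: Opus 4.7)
The plan is the standard scaling argument that lifts the pointwise boundary estimate of the previous corollary to a H\"older seminorm bound, by applying interior parabolic regularity on cylinders whose size is comparable to the parabolic distance to the boundary. Write $\gamma = 1-\varepsilon$ in the case $\beta = 1$ and $\gamma = 1$ in the case $\beta > 1$, and set $M := \|fd^{2-\beta}\|_{L^\infty(\Omega \cap Q_1)} + \|u\|_{L^\infty(\Omega \cap Q_1)}$. The previous corollary already yields $|u(x,t)| \leq CM\, d(x,t)^{\gamma}$ on $\Omega \cap Q_{1/2}$; the task is to upgrade this pointwise bound to the claimed H\"older seminorm estimate.

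Fix $P_1, P_2 \in \Omega \cap Q_{1/2}$, set $\rho := |x_1 - x_2| + |t_1-t_2|^{1/2}$, and assume without loss of generality that $d_1 := d(P_1) \leq d(P_2) =: d_2$. If $\rho \geq d_2/4$, then both distances are bounded by $C\rho$ and the pointwise bound gives $|u(P_1) - u(P_2)| \leq CM(d_1^\gamma + d_2^\gamma) \leq CM\rho^\gamma$ directly. Otherwise $Q_{d_2/2}(P_2) \subset \Omega \cap Q_1$ and $P_1, P_2 \in Q_{d_2/4}(P_2)$, so I would consider the parabolic rescaling $\tilde u(y,s) := u(x_2 + d_2 y, t_2 + d_2^2 s)$, which solves a parabolic equation on $Q_{1/2}$ with the same ellipticity and modulus of continuity as $A$, scaled drift, and right-hand side $\tilde f(y,s) = d_2^2 f(x_2+d_2 y, t_2 + d_2^2 s)$. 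The pointwise bound gives $\|\tilde u\|_{L^\infty(Q_{1/2})} \leq CMd_2^\gamma$, and the assumption on $f$ gives $\|\tilde f\|_{L^\infty(Q_{1/2})} \leq CMd_2^{\beta}$. The interior parabolic H\"older estimate from the preliminaries then produces
\[
[\tilde u]_{C^\gamma_p(Q_{1/4})} \leq C\bigl(\|\tilde u\|_{L^\infty(Q_{1/2})} + \|\tilde f\|_{L^\infty(Q_{1/2})}\bigr) \leq CMd_2^\gamma,
\]
where the $\tilde f$ contribution is absorbed since $d_2 \leq 1$ and $\beta \geq \gamma$. Unscaling gives $[u]_{C^\gamma_p(Q_{d_2/4}(P_2))} \leq CM$ and in particular $|u(P_1) - u(P_2)| \leq CM\rho^\gamma$.

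The main obstacle is that $f$ may blow up at the boundary like $d^{\beta-2}$, so $f$ is not globally bounded; the parabolic rescaling is tuned precisely so that on each interior cylinder the rescaled right-hand side becomes uniformly bounded with a prefactor $d_2^\beta$, which matches and is dominated by the prefactor $d_2^\gamma$ coming from the boundary $L^\infty$ bound on $\tilde u$. For $\beta = 1$ the only additional input needed is an interior almost-Lipschitz estimate for parabolic operators with merely continuous coefficients and bounded right-hand side, which is exactly the content of \cite[Theorem 2]{PS19} cited in the preliminaries; for $\beta > 1$ one instead invokes the interior Lipschitz estimate of the same type.
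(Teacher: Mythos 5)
Your argument is correct and follows essentially the same route as the paper: rescale to parabolic cylinders of size comparable to $d$, apply the interior estimate from \cite[Theorem~2]{PS19} to the rescaled function and right-hand side (noting $\|\tilde f\|_{L^\infty}\lesssim Md_2^\beta$ with $\beta\geq\gamma$), and combine with the pointwise bound $|u|\leq CMd^\gamma$ from the previous corollary. The only stylistic difference is that the paper concludes by citing the general chaining Lemma~\ref{fullRegularityFromCylinders}, whereas you inline the chaining directly via the case split $\rho\gtrless d_2/4$; since here $\gamma\leq 1$ so the $C^\gamma_p$ seminorm consists of a single difference quotient, your explicit two-case argument fully covers what that lemma provides in this range, and is perfectly adequate.
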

	
	\begin{proof}
		Let first $\beta>1$.
		Take any $Q_r(x_0,t_0)\subset Q_{2r}(x_0,t_0)\subset \Omega$ so that $d(x_0,t_0)\leq C r$ with $C$ independent on $u,f,x_0,t_0$. \footnote{The constant $C$ can be taken as $\frac{\sqrt{S}-1}{S-1}$, where $S=\left[ F\right]_{C^{\frac{1}{2}}_t}$.} 
		Define $u_r(x,t) = u(x_0+2rx, t_0+4r^2t)$. It solves
		$$\partial_t u_r + Lu_r  = 4r^2f \quad \text{in }  Q_1.$$
		Hence by interior regularity results \cite[Theorem 2]{PS19}
		\begin{align*}
			\left[ u_r\right]_{C_p^{1}(Q_{1/2})} &\leq C \big (||4r^2f||_{L^\infty( Q_{2_r(x_0,t_0)})}+||u_r||_{L^\infty(Q_1)}\big )
			\\&\leq Cr\big (||fd^{2-\beta}||_{L^\infty(\Omega\cap Q_1)}+||u||_{L^\infty(\Omega\cap Q_1)}\big ), 
		\end{align*}
		where we applied the growth control on $u$. Translating it back to $u$, we get
		$$\left[ u\right]_{C_p^{1}(Q_{r})} \leq C \big (||fd^{2-\beta}||_{L^\infty(\Omega\cap Q_1)}+||u||_{L^\infty(\Omega\cap Q_1)}\big ). $$
		Due to Lemma \ref{fullRegularityFromCylinders} the claim is proven. 
		
		When $\beta =1$, the prove is analogous. The growth control of $u$ is $|u|\leq Cd^{1-\varepsilon}$, which assures that we can bound $\left[ u\right]_{C_p^{1-\varepsilon}(Q_{r})}$ instead.
	\end{proof} 

	Having boundary regularity estimates enables us to prove the following Liouville-type theorem in a half-space. Recall that $Q_R^+ = Q_R\cap\{x_n>0\}.$
	
	\begin{proposition}\label{Liouville}
		Assume that $v$ solves 
		$$\left\lbrace\begin{array}{rcl l}
		\partial_t v - \operatorname{tr}(AD^2v) & =& P &\text{on } \{x_n>0\}\\
		v&=&0&\text{in } \{x_n=0\}\\
		||v||_{L^\infty(Q_R^+ )}& \leq& C_0R^\gamma &\forall R>1,\\ 
		\end{array} \right.$$
		for some constant, uniformly elliptic matrix $A$, $\gamma>0,$ $\gamma\not\in\N$, and some polynomial $P\in\textbf{P}_{\lfloor\gamma-2\rfloor,p}$  (if $\gamma<2$, then $P=0$). Then $v = Q x_n$ for some $Q\in\textbf{P}_{\lfloor\gamma-1\rfloor,p}.$
	\end{proposition}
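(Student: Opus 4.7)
My plan is to reduce the statement to the classical full-space Liouville theorem for constant-coefficient parabolic equations on $\R^{n+1}$ (any solution with polynomial growth of rate $\gamma$ must be a polynomial of parabolic degree $\leq\lfloor\gamma\rfloor$), using odd reflection across $\{x_n=0\}$ after an appropriate linear change of variables. Before reflecting, two preparations are needed: absorbing the polynomial right-hand side $P$ into a particular solution vanishing on $\{x_n=0\}$, and removing the mixed coefficients $a_{in}$ (for $i<n$) that would otherwise obstruct the reflection.

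For the first preparation, I would seek $\tilde P\in\textbf{P}_{\lfloor\gamma\rfloor,p}$ with $\partial_t\tilde P-\operatorname{tr}(AD^2\tilde P)=P$ and $\tilde P|_{\{x_n=0\}}=0$. The operator $\mathcal L:=\partial_t-\operatorname{tr}(AD^2\cdot)$ maps $\textbf{P}_{k,p}$ onto $\textbf{P}_{k-2,p}$ by a short induction on $k$ (invert any leading monomial by multiplying by $-x_n^2/(2a_{nn})$ and iterate on the lower-order remainder), producing some polynomial solution $\tilde P_0\in\textbf{P}_{\lfloor\gamma\rfloor,p}$. Its trace $Q:=\tilde P_0|_{\{x_n=0\}}$ is a polynomial in $(x',t)$ of parabolic degree $\leq\lfloor\gamma\rfloor$, and the parabolic Cauchy-Kowalewski construction -- prescribing $K|_{\{x_n=0\}}=Q$ and $\partial_nK|_{\{x_n=0\}}=0$, then recursively determining the higher $\partial_n^jK|_{\{x_n=0\}}$ from the PDE -- yields a caloric polynomial $K\in\textbf{P}_{\lfloor\gamma\rfloor,p}$ whose Taylor series in $x_n$ terminates. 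Setting $\tilde P:=\tilde P_0-K$ and $w:=v-\tilde P$, I am left with the homogeneous equation, zero Dirichlet trace on $\{x_n=0\}$, and the same growth $\|w\|_{L^\infty(Q_R^+)}\leq CR^\gamma$. The second preparation is the linear map $(x',x_n)\mapsto(x'-(a_{\cdot n}/a_{nn})x_n,x_n)$, which preserves the half-space (it fixes $x_n$) and turns $A$ into a matrix with vanishing off-diagonal entries $a_{in}$ ($i<n$), while preserving parabolic degrees and $L^\infty$ growth rates.

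In these new coordinates the operator is invariant under $x_n\mapsto-x_n$, so the odd extension $\tilde w(x',x_n,t):=-w(x',-x_n,t)$ for $x_n<0$ glues $w$ into a solution of the same homogeneous parabolic equation on all of $\R^{n+1}$ -- the $C^1$ matching across $\{x_n=0\}$ being automatic from $w|_{\{x_n=0\}}=0$ combined with the absence of $a_{in}$ cross terms -- with $\|\tilde w\|_{L^\infty(Q_R)}\leq CR^\gamma$. Interior Schauder estimates applied to the parabolic rescalings $\tilde w_R(x,t):=R^{-\gamma}\tilde w(Rx,R^2t)$ yield $\|D^k_p\tilde w\|_{L^\infty(Q_R)}\leq C_kR^{\gamma-k}$ for every $k\in\N$; choosing $k=\lfloor\gamma\rfloor+1>\gamma$ (possible since $\gamma\not\in\N$) gives $D^k_p\tilde w\equiv 0$, so $\tilde w$ is a polynomial of parabolic degree $\leq\lfloor\gamma\rfloor$. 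Its oddness in $x_n$ forces $\tilde w=x_n\tilde Q$ with $\tilde Q\in\textbf{P}_{\lfloor\gamma-1\rfloor,p}$. Undoing the coordinate change (which fixes $x_n$) and recombining with $\tilde P$, which is itself of the form $x_n$ times a polynomial in $\textbf{P}_{\lfloor\gamma-1\rfloor,p}$, yields the claimed decomposition $v=Qx_n$ with $Q\in\textbf{P}_{\lfloor\gamma-1\rfloor,p}$.

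I expect the main obstacle to be the first preparation: producing $\tilde P$ with the prescribed zero boundary trace and the sharp degree bound $\lfloor\gamma\rfloor$. Although everything is finite-dimensional linear algebra, parabolic degrees must be tracked carefully through the surjectivity of $\mathcal L$ and through the caloric extension $K$, so that the boundary-trace correction does not push the final degree beyond $\lfloor\gamma\rfloor$. The remaining steps -- the coordinate change, the odd reflection, and the full-space scaling Liouville argument -- are all routine applications of classical facts about constant-coefficient parabolic equations.
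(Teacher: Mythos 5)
Your proposal is correct, but it takes a genuinely different route from the paper. The paper iterates a difference-quotient argument: rescaling plus the boundary Lipschitz estimate (Corollary \ref{lipschitzBounds}) gives $\left[v\right]_{C^1_p(Q^+_{R/2})}\leq CR^{\gamma-1}$; then tangential and time difference quotients $w_1(x,t)=\frac{v(x+h,t+\tau)-v(x,t)}{|h|+|\tau|^{1/2}}$ ($h_n=0$) satisfy the same type of problem with one degree less of growth and a right-hand polynomial of one degree less, so after $\lceil\gamma\rceil$ iterations $v$ is shown to be a polynomial in $(x',t)$ with $x_n$-dependent coefficients $v_\alpha(x_n)$; each such coefficient then solves a second-order ODE $-a_{nn}v_\alpha''=\partial^\alpha P$ with $v_\alpha(0)=0$, so $v$ is a polynomial, and the growth forces the claimed degree. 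You instead absorb $P$ into a polynomial particular solution with zero trace, diagonalize out the $a_{in}$ ($i<n$) mixed terms by a shear, reflect oddly, and invoke the full-space Liouville theorem (the paper's Proposition \ref{LiouvilleFullSpace}); this is equally standard but frontloads the bookkeeping into finite-dimensional linear algebra on polynomial spaces. Both approaches are valid. The paper's avoids the need to construct the correction $\tilde P$ and to verify that the odd reflection produces a genuine solution across the hyperplane (which you treat somewhat informally: you should say a word about why the glued $\tilde w$ is a solution on $\{x_n=0\}$ --- e.g., $C^1$ matching from boundary regularity plus absence of a distributional singular layer since $\partial_n\tilde w$ is even --- and your inversion of $\mathcal L$ on a monomial $c(x,t)^\alpha$ should use the factor $-x_n^2/\bigl(a_{nn}(\alpha_n+1)(\alpha_n+2)\bigr)$ rather than $-x_n^2/(2a_{nn})$ when $\alpha_n>0$, then iterate on the lower-$x_n$-power remainder). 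Your approach, in exchange, reduces cleanly to a single application of the full-space Liouville theorem rather than tracking $\lceil\gamma\rceil$ successive finite-difference problems.
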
  

	\begin{proof}
		Again define
		$$v_R(x,t) := R^{-\gamma} v(Rx,R^2t).$$
		Then $||v_R||_{L^\infty(Q_{1})}\leq C_0$, $(\partial_t - \operatorname{tr}(AD^2))v_R(x,t) = R^{2-\gamma} P(Rx,R^2t),$ and $v_R=0$ on $\{x_n=0\}.$ Hence applying Corollary \ref{lipschitzBounds}, we get
		$$\left[ v_R\right] _{C_p^{1}(Q_{1/2}^+)}\leq C(||P|| + C_0),$$
		which translates to
		$$\left[ v\right] _{C_p^{1}(Q_{R/2}^+)}\leq CR^{\gamma-1}(||P|| + C_0),$$
		with $C$ independent of $R$ (same as in the previous proposition). Now we take arbitrary $h\in\R^n$ with $h_n=0$ and $\tau\in\R$, and define 
		$$w_1(x,t):= \frac{1}{|h|+|\tau|^{\frac{1}{2}}}(v(x+h,t+\tau)-v(x,t)),$$
		so that $||w_1||_{L^\infty(Q_R^+)}\leq \left[ v\right] _{C_p^{1}(Q_{R}^+)}\leq CR^{\gamma-1}$, as well as 
		$$\left\lbrace\begin{array}{rcl l}
		\partial_t w_1 - \operatorname{tr}(AD^2w_1) & =& P_1 &\text{in } \{x_n>0\}\\
		w_1&=&0&\text{in } x_n=0,\\
		\end{array} \right.$$
		for some polynomial $P_1\in \textbf{P}_{\lfloor\gamma-3\rfloor,p}$. After $K=\lceil \gamma\rceil$ same steps we conclude that $w_K$ is constant which implies that 
		$$v(x,t)=\sum_{|\alpha|\leq \gamma} v_{\alpha}(x_n) (x',t)^\alpha,$$
		for some functions $v_\alpha$ depending only on $x_n$.
		Choose now some maximal multi-index $\alpha$. Then $\partial^\alpha v = c_\alpha v_\alpha $ satisfies 
		$$\left\lbrace\begin{array}{rcl l}
		- a_{nn}\frac{d^2}{dx_n^2} v_\alpha  & =& \partial^\alpha P &\text{in } \{x_n>0\}\\
		v_\alpha&=&0&\text{in } \{x_n=0\},\\
		\end{array} \right.$$
		and hence $v_\alpha$ is a polynomial. Continuing in a similar manner and treating higher order coefficients as right-hand side, we conclude that $v$ is a polynomial itself. Noticing that $v$ satisfies suitable growth control and vanishes on $\{x_n=0\}$ we conclude the wanted result.
	\end{proof}

\section{Boundary Harnack estimate in $C^1_p$ domains}\label{sectionBoundaryHarnackC1pDomains}

	In this section we prove Theorem \ref{regularityOfQuotient2}.
	To establish it we first need to show the following expansion result, saying that solutions of \eqref{equationForTwoSolutions} can be well approximated near the boundary by another non-trivial solution.

	\begin{proposition}\label{HarnackExpansion0}
	Let $\Omega\subset\R^{n+1}$ be $C^1_p$ in $Q_1$ in the sense of Definition \ref{assumptionsOnDomain}, with $||F||_{C^{1}_p(Q_1')}\leq 1$.  Let $L$ be an operator of the form \eqref{oparatorForm} satisfying conditions \eqref{coefficientCondition}, with $A\in C^{0}(\Omega)$. For $i=1,2$ let $u_i$ be a solution to 
	$$\left\lbrace\begin{array}{rcl l}
	\partial_t u_i + Lu_i & =& f_i &\text{in } \Omega\cap Q_1\\
	u_i& =& 0 &\text{on } \partial\Omega\cap Q_1,\\ 
	\end{array} \right.$$
	with $f_i\in L^\infty(\Omega\cap Q_1)$. Assume that $|u_2|\geq c_0 d$ with $c_0>0$, $||u_i||_{L^\infty(\Omega)}\leq 1$ and $||f_i||_{L^\infty(\Omega)}\leq 1$. Let $\varepsilon\in(0,1)$. Then for every $(z,s)\in\partial\Omega\cap Q_{1/2}$ there exists a constant $c_{(z,s)}\in \R$, so that 
	$$|u_1(x,t)-c_{(z,s)} u_2(x,t)|\leq C\big (|x-z|^{2-\varepsilon} + |t-s|^{\frac{2-\varepsilon}{2}}\big ).$$
	The constant $C$ depends only on $n,\varepsilon,c_0$, the modulus of continuity of $A$ and ellipticity constants.
	
	Moreover for every $(x_0,t_0)\in \Omega\cap Q_{1/2}$, such that $d_t(x_0) = |x_0-z| = c_\Omega r,$ we have 
	\begin{equation}\label{interRegForExpansion0}
	\left[ u_1 - c_{(z,t_0)} u_2 \right] _{C^{2-\varepsilon}_p(Q_r(x_0,t_0))}\leq C.
	\end{equation}
\end{proposition}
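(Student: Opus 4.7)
The plan is to argue by contradiction and blow-up, reducing the estimate to the Liouville theorem Proposition \ref{Liouville} in a half-space, following the philosophy of the non-local elliptic analogue \cite{AR20}. If the conclusion fails one finds sequences of admissible data $\Omega_k, L_k, u_i^{(k)}, f_i^{(k)}$ with uniform bounds together with boundary points $(z_k, s_k) \in \partial\Omega_k \cap Q_{1/2}$ along which the best approximation constant blows up. For each $k$ introduce
\[
\phi_k(r) := r^{-(2-\varepsilon)} \inf_{c \in \R} \|u_1^{(k)} - c u_2^{(k)}\|_{L^\infty(Q_r(z_k,s_k) \cap \Omega_k)},
\]
and select $r_k \to 0$ which (essentially) realizes $\theta_k(r) := \sup_{\rho \geq r} \phi_k(\rho)$, so that $M_k := \phi_k(r_k) \to \infty$ and $\phi_k(\rho) \leq C M_k$ for every $\rho \geq r_k$. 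Let $\bar c_k$ be a minimizer in the definition of $\phi_k(r_k)$; the $L^\infty$ bound on $u_1^{(k)}$ together with the non-degeneracy $|u_2^{(k)}| \geq c_0 d$ evaluated at a point $P$ with $d(P)$ of order one gives $|\bar c_k|$ bounded uniformly in $k$.

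Parabolically rescale
\[
v_k(x,t) := \frac{(u_1^{(k)} - \bar c_k u_2^{(k)})(z_k + r_k x, s_k + r_k^2 t)}{r_k^{2-\varepsilon} M_k}, \qquad
\tilde u_2^{(k)}(x,t) := \frac{u_2^{(k)}(z_k + r_k x, s_k + r_k^2 t)}{\|u_2^{(k)}\|_{L^\infty(Q_{r_k}(z_k,s_k))}},
\]
so that on the rescaled domain $\tilde\Omega_k := r_k^{-1}(\Omega_k - (z_k,s_k))$ one has $\|v_k\|_{L^\infty(Q_1 \cap \tilde\Omega_k)} = \|\tilde u_2^{(k)}\|_{L^\infty(Q_1 \cap \tilde\Omega_k)} = 1$. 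Both vanish on $\partial\tilde\Omega_k$ and solve second-order parabolic equations with preserved ellipticity constants, coefficients inheriting the modulus of continuity of $A$, and right-hand sides going to $0$ in $L^\infty$ (using $r_k \to 0$ and $M_k \to \infty$). The key estimate is the polynomial growth
\[
\|v_k\|_{L^\infty(Q_R \cap \tilde\Omega_k)} \leq C R^\gamma, \qquad 1 \leq R \leq r_k^{-1},
\]
for some non-integer $\gamma \in (2-\varepsilon, 2)$. This is obtained by splitting $u_1^{(k)} - \bar c_k u_2^{(k)} = (u_1^{(k)} - c_k(Rr_k) u_2^{(k)}) + (c_k(Rr_k) - \bar c_k) u_2^{(k)}$, bounding the first term by the maximality of $\phi_k$ at $r_k$, and telescoping $|c_k(2^{j+1}r_k) - c_k(2^j r_k)|$ over dyadic scales via the non-degeneracy of $u_2^{(k)}$ at interior points. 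This is the main obstacle: since in a $C^1_p$ domain the solutions are only almost-Lipschitz (Corollary \ref{lipschitzBounds}), the telescoping produces a small loss that forces $\gamma$ to be taken strictly larger than $2-\varepsilon$, which is the conceptual reason why one cannot hope for $\gamma$ exactly equal to $2-\varepsilon$.

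The uniform $C^1_p$ character of $\Omega_k$, together with $\nabla F_k(0,0)=0$, implies that $\tilde\Omega_k$ flattens to a half-space $\{x_n > 0\}$ locally uniformly. Combining the growth control with the interior and boundary almost-Lipschitz estimates of Corollary \ref{lipschitzBounds}, the Arzel\`a–Ascoli theorem and the stability of viscosity solutions (\cite[Theorem 1.1]{Ba05}) yield, along a subsequence, locally uniform limits $v_k \to v_\infty$ and $\tilde u_2^{(k)} \to u_\infty$ on $\overline{\{x_n > 0\}}$. Both limits vanish on $\{x_n = 0\}$ and solve $\partial_t w - \operatorname{tr}(\bar A D^2 w) = 0$ in $\{x_n > 0\}$ for some constant uniformly elliptic matrix $\bar A$. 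Applying Proposition \ref{Liouville} with $P = 0$ and the above non-integer $\gamma$ forces $v_\infty(x,t) = \alpha x_n$ and $u_\infty(x,t) = \beta x_n$, with $|\beta| > 0$ since $\|u_\infty\|_{L^\infty(Q_1 \cap \{x_n > 0\})} = 1$.

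Finally, the optimality of $\bar c_k$ at scale $r_k$ reads $\|u_1^{(k)} - (\bar c_k + t) u_2^{(k)}\|_{L^\infty(Q_{r_k}(z_k,s_k) \cap \Omega_k)} \geq r_k^{2-\varepsilon} M_k$ for every $t \in \R$. Dividing by $r_k^{2-\varepsilon} M_k$ and setting $\tau = t \|u_2^{(k)}\|_{L^\infty(Q_{r_k}(z_k,s_k))} / (r_k^{2-\varepsilon} M_k)$ rewrites this as $\|v_k - \tau \tilde u_2^{(k)}\|_{L^\infty(Q_1 \cap \tilde\Omega_k)} \geq 1$ for every $\tau \in \R$. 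Passing to the limit gives $\|\alpha x_n - \tau \beta x_n\|_{L^\infty(Q_1 \cap \{x_n > 0\})} \geq 1$ for every $\tau \in \R$, which is absurd upon choosing $\tau = \alpha/\beta$. This contradiction establishes the expansion. The interior estimate \eqref{interRegForExpansion0} then follows by applying parabolic interior Schauder-type estimates (\cite[Theorem 2]{PS19}) to $u_1 - c_{(z,t_0)} u_2$ on $Q_{r/2}(x_0,t_0)$: the expansion gives $\|u_1 - c_{(z,t_0)} u_2\|_{L^\infty(Q_r(x_0,t_0))} \leq C r^{2-\varepsilon}$, and a parabolic rescaling of $Q_r(x_0,t_0)$ to $Q_1$ converts this $L^\infty$ bound into the claimed $C^{2-\varepsilon}_p$ bound on $Q_r(x_0,t_0)$.
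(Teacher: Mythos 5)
Your overall strategy (contradiction, blow-up, Liouville in the half-space) is the same as the paper's, but your execution differs in two structural choices. You replace the paper's $L^2$-projections $c_{k,\rho}=\int_{Q_\rho} u_1 u_2\big/\int_{Q_\rho} u_2^2$ by the $L^\infty$-best constants $\bar c_k$, and you derive the final contradiction from the $L^\infty$-optimality of $\bar c_k$ (passing $\|v_k-\tau\tilde u_2^{(k)}\|_{L^\infty}\geq 1$ to the limit) rather than from the paper's orthogonality condition $\int_{Q_1} v\,(x_n)_+=0$. This is a legitimate alternative route, and normalizing $\tilde u_2^{(k)}$ by its $L^\infty(Q_{r_k})$-norm to identify the second blow-up limit is a clean way to organize the argument.

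There is, however, a genuine gap in the polynomial-growth step, and it is caused precisely by the $L^\infty$-minimizer choice. The paper defines $\theta(r)=\sup_k\sup_{\rho>r}\rho^{\varepsilon-2}\|u_{1,k}-c_{k,r}u_{2,k}\|_{L^\infty(Q_\rho)}$ with the constant frozen at scale $r$; then $\|u_{1,k_m}-c_{k_m,r_m}u_{2,k_m}\|_{L^\infty(Q_{Rr_m})}\leq\theta(r_m)(Rr_m)^{2-\varepsilon}$ is immediate from the definition, so $\|v_m\|_{L^\infty(Q_R)}\leq R^{2-\varepsilon}$ without any telescoping (the telescoping is only used to show $|c_{k,\rho}|/\theta(\rho)\to 0$). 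Your $\theta_k(r)=\sup_{\rho\geq r}\phi_k(\rho)$ compares $u_1^{(k)}$ against the optimal constant \emph{at each scale $\rho$}, so to bound $\|v_k\|_{L^\infty(Q_R)}$ you must write $u_1^{(k)}-\bar c_k u_2^{(k)}=(u_1^{(k)}-c_k(Rr_k)u_2^{(k)})+(c_k(Rr_k)-\bar c_k)u_2^{(k)}$ and control the second piece. Telescoping gives $|c_k(Rr_k)-\bar c_k|\leq CM_k(Rr_k)^{1-\varepsilon}$, but in a $C^1_p$ domain the available growth of $u_2^{(k)}$ is only $\|u_2^{(k)}\|_{L^\infty(Q_{Rr_k}(z_k,s_k))}\leq C_\eta(Rr_k)^{1-\eta}$ (almost-Lipschitz, Corollary \ref{lipschitzBounds}), not $\leq CRr_k$. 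The resulting bound on the second piece, divided by $r_k^{2-\varepsilon}M_k$, is $CR^{2-\varepsilon-\eta}r_k^{-\eta}$, which for fixed $R$ diverges as $k\to\infty$. This is not a question of shifting $\gamma$ to something in $(2-\varepsilon,2)$, as you suggest — the loss is a divergent factor in $k$, not an extra power of $R$ — so as written the compactness step has no uniform bound to feed into Arzel\`a–Ascoli. To repair it you would either have to switch to the paper's frozen-at-$r_k$ normalization (which makes the growth automatic) or find a sharper bound on $u_2^{(k)}$ than the barrier gives in $C^1_p$.

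A secondary imprecision: you assert that $|\bar c_k|$ is bounded by testing the approximation at a point $P$ with $d(P)$ of order one, but the $L^\infty$-optimality of $\bar c_k$ only controls $u_1^{(k)}-\bar c_k u_2^{(k)}$ on $Q_{r_k}(z_k,s_k)$, a cylinder of radius $r_k\to 0$, so you cannot evaluate at such a $P$. Testing at an interior point of $Q_{r_k}$ only yields $|\bar c_k|\leq C(r_k^{-\eta}+r_k^{1-\varepsilon}M_k)$, which need not be bounded. This is harmless for your purposes — the right-hand side of the rescaled equation still tends to $0$ since $r_k^\varepsilon(1+|\bar c_k|)/M_k\to 0$ — but the stated claim and justification are not correct. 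The paper sidesteps this entirely by proving the weaker (and sufficient) statement $|c_{k,\rho}|/\theta(\rho)\to 0$.
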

\begin{proof}
	With the same reasoning as in Proposition \ref{shauderExpansion} we can assume that $(z,s)=(0,0)$. We prove the claim by contradiction. Suppose for $i=1,2$ and $k\in\N$, there exist $\Omega_k$ which are $C^1_p$ in the sense of Definition \ref{assumptionsOnDomain} with $(0,0)\in\partial\Omega_k$ and $||F_k||_{C^1_p(Q_1')}\leq 1$,  functions $u_{i,k}$, $f_{i,k}$ with $||u_{i,k}||_{L^\infty}\leq 1$, $u_{2,k}\geq c_0 d$, $||f||_{L^\infty}\leq1$ and operators $L_k =-\operatorname{tr}(A_kD^2) + b_k\nabla $, with the modulus of continuity of $A_k$ independent of $k$, so that
	$$\left\lbrace\begin{array}{rcl l}
	\partial_t u_{i,k} + L_ku_{i,k} & =& f_{i,k} &\text{in } \Omega_k\cap Q_1\\
	u_{i,k}& =& 0 &\text{on } \partial\Omega_k\cap Q_1, 
	\end{array} \right.$$
	but 
	$$\sup_k\sup_{r>0} r^{-2+\varepsilon}||u_{1,k}-c_ku_{2,k} ||_{L^\infty(Q_r)}  = \infty$$
	for any choice of constants $c_k$.
	We extend functions $u_{i,k}$ with $0$ outside of $\Omega_k$ to functions defined on whole $Q_1$. We define 
	$$c_{k,r} = \frac{\int_{Q_r} u_{1,k}u_{2,k}}{\int_{Q_r}u_{2,k}^2},$$
	so that $\int_{Q_r}(u_{1,k}-c_{k,r}u_{2,k})u_{2,k}=0$. Furthermore we define
	$$\theta(r) = \sup_k\sup_{\rho>r} \rho^{-2+\varepsilon}||u_{1,k}-c_{k,r}u_{2,k}||_{L^\infty(Q_\rho)}.$$
	Thanks to Lemma \ref{copied4.5} $\theta(r)\to\infty$ as $r\downarrow 0.$ Choose a sequence $r_m,k_m$, with $r_m\leq \frac{1}{m}$, so that 
	$$\frac{1}{r_m^{2-\varepsilon}\theta(r_m)}||u_{1,km}-c_{k_m,r_m}u_{2,k_m}||_{L^\infty(Q_{r_m})}\geq \frac{1}{2}.$$
	We define the blow-up sequence 
	$$v_m(x,t) := \frac{1}{r_m^{2-\varepsilon}\theta(r_m)}\left(u_{1,k_m}-c_{k_m,r_m}u_{2,k_m}\right)(r_mx,r_m^2t).$$
	Due to the choice of $r_m,k_m$ and $c_{k_m,r_m},$ we have $1/2\leq ||v_m||_{L^\infty(Q_1)}\leq 1$ and $\int_{Q_1} v_m u_{2,k_m} = 0$.
	
	We now turn our attention to constants $c_{k,\rho}$. Since $u_{2,k}>c_0d_k$, we can estimate
	\begin{align*}
	\rho|c_{k,\rho}-c_{k,2\rho}|\leq & C||c_{k,\rho}u_{2,k}-c_{k,2\rho}u_{2,k}||_{L^\infty(Q_\rho\cap\{d_k>\rho/2\})}\\
	\leq & C||u_{1,k}-c_{k,\rho}u_{2,k}||_{L^\infty(Q_\rho)} + C||u_{1,k}-c_{k,2\rho}u_{2,k}||_{L^\infty(Q_{2\rho})}\\
	\leq & C\theta(\rho) \rho^{2-\varepsilon} + C\theta(2\rho)(2\rho)^{2-\varepsilon}\leq C\theta(\rho)\rho^{2-\varepsilon},
	\end{align*}
	so that it holds
	$$|c_{k,\rho}-c_{k,2\rho}|\leq C\theta(\rho)\rho^{1-\varepsilon}.$$
	Iterating the inequality above we get for any $j\in\N$
	\begin{align*}
	|c_{k,\rho}-c_{k,2^j\rho}|\leq  \sum_{i=0}^{j-1}   |c_{k,2^i\rho}-c_{k,2^{i+1}\rho}|      \leq & C\sum_{i=0}^{j-1}\theta(2^i\rho)(2^i\rho)^{1-\varepsilon}\\
	\leq & C\theta(\rho) \rho^{1-\varepsilon}\sum_{i=0}^{j-1} \frac{\theta(2^i\rho)}{\theta(\rho)} 2^{i(1-\varepsilon)}\\
	\leq & C\theta(\rho)(2^j\rho)^{1-\varepsilon}.
	\end{align*}
	It follows that for any $R>1$, we have
	$$|c_{k,\rho}-c_{k,R\rho}|\leq C\theta(\rho)(R\rho)^{1-\varepsilon},$$
	and therefore
	$$||c_{k,\rho}u_{2,k} - c_{k,R\rho}u_{2,k}  ||_{L^\infty(Q_{R\rho})}\leq C\theta(\rho)(R\rho)^{2-\varepsilon}.$$
	Hence
	\begin{align*}
	&||v_m||_{L^\infty(Q_R)} =  \frac{1}{r_m^{2-\varepsilon}\theta(r_m)}||u_{1,k_m}-c_{k_m,r_m}u_{2,k_m}||_{L^\infty(Q_{Rr_m})}\\
	&\leq \frac{1}{r_m^{2-\varepsilon}\theta(r_m)}\left(||u_{1,k_m}-c_{k_m,Rr_m}u_{2,k_m}||_{L^\infty(Q_{Rr_m})} + ||c_{k_m,Rr_m}u_{2,k_m}-c_{k_m,r_m}u_{2,k_m}||_{L^\infty(Q_{Rr_m})} \right)\\
	&\leq \frac{1}{r_m^{2-\varepsilon}\theta(r_m)}\left( \theta(Rr_m)(Rr_m)^{2-\varepsilon} + C\theta(r_m)(Rr_m)^{2-\varepsilon} \right)\leq CR^{2-\varepsilon}.
	\end{align*}
	Moreover for each $\rho>0$ we have 
	$$\frac{|c_{k,\rho} - c_{k,2^j\rho}|}{\theta(\rho)}\leq C\sum_{i=0}^{j} \frac{\theta(2^{j-i}\rho)}{\theta(\rho)}(2^{j-i}\rho)^{1-\varepsilon},$$
	and choosing $j\in \N$ such that $2^j\rho\in \left[ 1,2\right) $, we deduce 
	$$\frac{|c_{k,\rho} - c_{k,2^j\rho}|}{\theta(\rho)}\leq C\sum_{i=0}^{j} \frac{\theta(2^{-i}\rho)}{\theta(\rho)}(2^{-i})^{1-\varepsilon}\longrightarrow 0 \quad \text{as }\rho\downarrow0.$$
	Hence, since $c_{k,\rho}$ is bounded for $\rho\in \left[ 1,2\right) $, we get
	$$\frac{|c_{k,\rho}|}{\theta(\rho)}\longrightarrow 0\quad \text{as }\rho\downarrow0,$$ uniformly in $k$.
	
	We compute 
	$$(\partial_t + \tilde{L}_{k_m})v_m(x,t) = \frac{r_m^\varepsilon}{\theta(r_m)}\left(f_{1,k_m}-c_{k_mr_m}f_{2,k_m}\right)(r_mx,r_m^2t),$$
	and hence
	$$|(\partial_t + \tilde{L}_{k_m})v_m(x,t)|\leq \frac{r_m^\varepsilon (1+c_{k_m,r_m})}{\theta(r_m)}.$$
	The right hand side converges to $0$ uniformly in compact sets in $\{x_n>0\}.$ Hence it is also bounded uniformly in $m$, and so by Lemma \ref{lipschitzBounds} we get uniform H\"older bounds $\left[ v_m\right]_{C^{1-\varepsilon}(Q_R)}\leq C_R$. Hence passing to a subsequence $v_m$ converges to $v$ locally uniformly in $\R^{n+1}$, that by \cite[Theorem 1.1]{Ba05} satisfies
	$$\left\lbrace\begin{array}{rcll}
	(\partial_t+L)v &=& 0&\text{in }\{x_n>0\}\\
	v&=&0&\text{on }\{x_n\leq0\},\\
	||v||_{L^\infty(Q_R)}&\leq &CR^{2-\varepsilon}&\text{for all } R\geq 1\\
	||v||_{L^\infty(Q_1)}&\geq& \frac{1}{2}&\\
	\int_{Q_1}v \cdot(x_n)_+ &=& 0,& 
	\end{array}\right.$$
	where $L$ is some constant coefficient $2$-homogeneous uniformly elliptic operator. Hence Theorem \ref{Liouville} says that $v = c (x_n)_+$ for some $c\in\R$. Therefore the last two properties of $v$ contradict each other.
	
	To prove \eqref{interRegForExpansion0}, we apply interior estimates \cite[Theorem 2]{PS19} on rescaled function $v_r(x,t) = (u_1-c_{(z,t_0)}v_2)(x_0+2rx,t_0+4r^2t)$, and take into account the above proven growth control and the uniform boundedness of $|c_{(z,t_0)}|$, see Lemma \ref{uniformBoundOnPolynomials}.
\end{proof}

	Combining the above result with estimates for $u_2$ yields Theorem \ref{regularityOfQuotient2}.
	
	\begin{proof}[Proof of Theorem  \ref{regularityOfQuotient2}]
		The proof goes along the same lines as \cite[Theorem 1.3]{AR20}.		
		
		Choose a cylinder $Q_r(x_0,t_0)$, so that $Q_{2_r}(x_0,t_0)\subset \Omega\cap Q_{3/4},$ and $d(x_0,t_0)\leq Cr$, with $C$ independent of $x_0,t_0$ and $r$. Let $z$ be the closest point to $x_0$ in $\partial\Omega_{t_0}$.
		Take arbitrary $(x,t)$ and $(y,s)$ in $Q_r(x_0,t_0)$, denote $c = c_{(z,t_0)}\in\R$ from the previous proposition, and compute
		$$\left|\frac{u_1}{u_2}(x,t) -\frac{u_1}{u_2}(y,s)\right| = 
		\left|\left(\frac{u_1}{u_2}(x,t) -c \right) -\left(\frac{u_1}{u_2}(y,s)-c \right)\right| \leq $$
		\begin{align}\label{termWithExpanisons1}
		\begin{split}
		\leq &  \left| \left( u_1 - c u_2 \right)(x,t) - \left( u_1 - c u_2 \right)(y,s) \right|\left|  u_2^{-1}(x,t) \right|+\\
		&+  \left| \left( u_1 - c u_2 \right)(y,s) \right|\left|  u_2^{-1}(x,t)- u_2^{-1}(y,s) \right|.\\
		\end{split}
		\end{align}
		By Proposition \ref{HarnackExpansion0} and Lemma \ref{fullRegularityFromCylinders} 
		we have that $u_1 - c u_2$ is $C^{2-\varepsilon}_p(\overline{\mathcal{C}}),$ for a cone $\mathcal{C} = \cup_{d_{t_0}(x_0) = c_\Omega r=|x_0-z|} Q_r(x_0,t_0)$, with $D^k_p(u_1 - p u_2)(z,t_0)=0$, for all $k< 2$. Hence by Lemma \ref{polynomialAproximation} we can estimate 
		$$|D^k_p (u_1 - p u_2) (x,t)|\leq C (|x-z|^{2-\varepsilon-k}+|t-t_0|^{\frac{2-\varepsilon-k}{2}}), \quad (x,t)\in\mathcal{C}, k< 2.$$
		
		We denote $v = u_1-cu_2$ and continue  estimating \eqref{termWithExpanisons1}:
		\begin{align*}
		| v(x,t)- v(y,s)|\leq& | v(x,t)- v(y,t)|+| v(y,t)-v(y,s)|\leq \\
		\leq& ||D^{1}_p v||_{L^\infty(Q_r(x_0,t_0))} |x-y| + \left[v\right]_{C^{2-\varepsilon}(Q_r(x_0,t_0))} |t-s|^{\frac{2-\varepsilon}{2}}\\
		\leq & C r^{1-\varepsilon} r^{\varepsilon}|x-y|^{1-\varepsilon} + C r^1 |t-s|^{\frac{1-\varepsilon}{2}}\\
		\leq & Cr(|x-y|^{1-\varepsilon}+|t-s|^{\frac{1-\varepsilon}{2}})
		\end{align*}
		Finally we also estimate
		\begin{align*}
			\left|  u_2^{-1}(x,t)- u_2^{-1}(y,s) \right|&\leq \left|  u_2^{-1}(x,t) u_2^{-1}(y,s) \right|\left|  u_2(x,t)- u_2(y,s) \right|\\
			&\leq Cr^{-2} \left[u_2\right]_{C^{1-\varepsilon}(\Omega\cap Q_{3/4})} (|x-y|^{1-\varepsilon}+|t-s|^{\frac{1-\varepsilon}{2}})\\
			&\leq C r^{-2+\varepsilon}(|x-y|^{1-2\varepsilon}+|t-s|^{\frac{1-2\varepsilon}{2}}),
		\end{align*}
		where we used Corollary \ref{lipschitzBounds} for $u_2$.
		Putting it all together we get
		$$\left|\frac{u_1}{u_2}(x,t) -\frac{u_1}{u_2}(y,s)\right|\leq C(|x-y|^{1-2\varepsilon}+|t-s|^{\frac{1-2\varepsilon}{2}}).$$
		
		Up to choosing a different $\varepsilon$ at the beginning, the claim is proven, thanks to Lemma \ref{fullRegularityFromCylinders}. 
	\end{proof}

\section{Higher order boundary Schauder and boundary Harnack estimates}\label{higherOrdaryBoundaryShcauder...}

	We proceed with finding a finer, higher order description of solutions near the boundary. The first goal is to get Schauder type expansions of solutions at boundary points - approximations with polynomials multiplied with the distance function. Note that the boundary Schauder estimates are already known; see for example \cite[Section IV.7]{Li96} or \cite[Section 10.3]{Kr96}. Still, the proof we present here is different and it is useful for the main result: the higher order parabolic boundary Harnack estimate. 
	
	\begin{proposition}\label{shauderExpansion}
		Let $\beta>1$, $\beta\not\in\N$. Let $\Omega\subset\R^{n+1}$ be $C^\beta_p$ in $Q_1$ in the sense of Definition \ref{assumptionsOnDomain}  with $||F||_{C^{\beta}_p(Q_1')}\leq 1$. Let $L$ be an operator of the form \eqref{oparatorForm} satisfying conditions \eqref{coefficientCondition}, with $A,b\in C^{\beta-2}_p(\overline{\Omega})$, if $\beta>2$ and $A\in C^0(\overline{\Omega})$, $b\in L^\infty(\Omega)$, if $\beta<2$. Let $u$ be a solution to 
		$$\left\lbrace\begin{array}{rcl l}
		\partial_t u + Lu & =& f &\text{in } \Omega\cap Q_1\\
		u& =& 0 &\text{on } \partial\Omega\cap Q_1,\\ 
		\end{array} \right.$$
		with $||f||_{C^{\beta-2}_p(\Omega)}\leq 1$ if $\beta>2$ and $|f|\leq d^{\beta-2}$ if $\beta<2.$ Assume that $||u||_{L^\infty(\Omega)}\leq 1$. Then for every $(z,s)\in\partial\Omega\cap Q_{1/2}$ there exists a polynomial $p_{(z,s)}\in \textbf{P}_{\lfloor\beta-1\rfloor,p}$, so that 
		$$|u(x,t)-p_{(z,s)}(x,t)d(x,t)|\leq C(|x-z|^\beta + |t-s|^{\frac{\beta}{2}}).$$
		The constant $C$ depends only on $n,\beta$, ellipticity constants and $||A||_{C^{\beta-2}_p(\overline{\Omega})}, ||b||_{C^{\beta-2}_p(\overline{\Omega})}$ if $\beta>2$, and the modulus of continuity of $A$, if $\beta<2$..
	\end{proposition}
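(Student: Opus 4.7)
The plan is to adapt the contradiction and blow-up scheme used in Proposition \ref{HarnackExpansion0}, replacing the comparison solution $c\, u_2$ with $p \cdot d$, where $p \in \textbf{P}_{\lfloor\beta-1\rfloor,p}$ and $d$ is the generalised distance from Lemma \ref{generalisedDistance}. By translating we reduce to $(z,s) = (0,0)$. The heuristic for the model is the half-space Liouville theorem (Proposition \ref{Liouville}): solutions of the constant-coefficient parabolic equation in $\{x_n>0\}$ that vanish on $\{x_n=0\}$ and grow at rate $\beta$ are precisely of the form $Q(x,t)\,x_n$ with $Q \in \textbf{P}_{\lfloor\beta-1\rfloor,p}$, which corresponds to $p\cdot d$ after flattening the boundary.

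For each $r\in(0,1/2]$ define $p_r \in \textbf{P}_{\lfloor\beta-1\rfloor,p}$ as the orthogonal projection of $u/d$ onto the polynomial space with respect to the weighted inner product $\langle \varphi,\psi\rangle_r = \int_{Q_r\cap \Omega} \varphi\, \psi\, d^2$, i.e.
$$\int_{Q_r\cap \Omega} (u - p_r d)\, q\, d = 0 \qquad \text{for every } q\in \textbf{P}_{\lfloor\beta-1\rfloor,p}.$$
Assume the statement fails; index the hypothetical counterexamples by $k$ and set
$$\theta(r) = \sup_{k}\sup_{\rho \geq r} \rho^{-\beta}\,\|u_k - p_{k,\rho}\, d_k\|_{L^\infty(Q_\rho)}.$$
A standard dichotomy (as in Proposition \ref{HarnackExpansion0}, together with Lemma \ref{copied4.5}) gives $\theta(r)\to\infty$ as $r\downarrow 0$. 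Choose $r_m\downarrow 0$ and $k_m$ nearly attaining the supremum, and define the blow-up
$$v_m(x,t) = \frac{1}{r_m^\beta\, \theta(r_m)}\, \bigl( u_{k_m} - p_{k_m,r_m}\, d_{k_m}\bigr)(r_m x, r_m^2 t).$$
On $Q_\rho \cap\{d_k > \rho/2\}$ one has $d_k \geq c\rho$, and combining this with the triangle inequality applied to $\|u_k - p_{k,\rho}d_k\|$ and $\|u_k - p_{k,2\rho}d_k\|$ yields, coefficient by coefficient, an iterated estimate on $p_{k,\rho}-p_{k,2^j\rho}$ (this is the parabolic polynomial analogue of the bound on $|c_{k,\rho}-c_{k,2\rho}|$ in Proposition \ref{HarnackExpansion0}). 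From this one reads off both the growth bound $\|v_m\|_{L^\infty(Q_R^+)}\leq C R^\beta$ for every $R\geq 1$, the normalization $\|v_m\|_{L^\infty(Q_1^+)}\in [1/2,1]$, and $\|p_{k_m,r_m}\|/\theta(r_m)\to 0$.

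Letting $m\to\infty$, the rescaled coefficient matrices $A_{k_m}(r_m\cdot,r_m^2\cdot)$ converge to a constant elliptic matrix $A_0$ (using $A\in C^{\beta-2}_p$ or $A\in C^0$ as in the hypotheses), and the rescaled domains $\Omega_{k_m}/r_m$ converge to $\{x_n > 0\}$ because each $\Omega_{k_m}$ is $C^1_p$ with $F_{k_m}(0,0)=0$ and $\nabla_{x'} F_{k_m}(0,0)=0$. The inhomogeneous term of the equation for $v_m$, namely a rescaling of $f_{k_m} - (\partial_t + L_{k_m})(p_{k_m,r_m} d_{k_m})$, tends to $0$ locally uniformly in $\{x_n>0\}$, using the bound $|D^2 d_k|\leq C d_k^{\beta-2}$ from Lemma \ref{generalisedDistance} together with the decay $\|p_{k_m,r_m}\|/\theta(r_m)\to 0$ and the growth hypothesis on $f$. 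Uniform boundary Hölder estimates from Corollary \ref{lipschitzBounds} at each fixed scale $R$ provide compactness, so up to a subsequence $v_m\to v$ locally uniformly. The limit satisfies $\partial_t v - \operatorname{tr}(A_0 D^2 v) = 0$ in $\{x_n>0\}$, vanishes on $\{x_n = 0\}$, obeys $\|v\|_{L^\infty(Q_R^+)}\leq C R^\beta$ and $\|v\|_{L^\infty(Q_1^+)}\geq 1/2$, and inherits the orthogonality $\int_{Q_1^+} v\, q\, x_n = 0$ for every $q \in \textbf{P}_{\lfloor\beta-1\rfloor,p}$. Proposition \ref{Liouville} forces $v(x,t) = Q(x,t)\,x_n$ with $Q\in \textbf{P}_{\lfloor\beta-1\rfloor,p}$; taking $q = Q$ in the orthogonality relation yields $Q\equiv 0$, contradicting the lower bound.

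The step I expect to be the main obstacle is the control of $(\partial_t + L)(p_r\, d)$ as a source term for $u - p_r d$. When $\beta < 2$ the second derivatives of the generalised distance are only bounded by $C d^{\beta-2}$, exactly at the parabolic scaling threshold, and the growth hypothesis $|f|\leq d^{\beta-2}$ operates at the same threshold. One must combine the smallness of $\|p_{k_m,r_m}\|/\theta(r_m)$ coming from the iterated projection estimate with these singular bounds in order to ensure that the rescaled inhomogeneity actually vanishes in the limit, rather than producing a nontrivial source for $v$. The regime $\beta > 2$ is conceptually similar but technically cleaner, since the Hölder regularity available on $A$, $b$ and $f$ allows one to treat $L(p_r d)$ as a smooth perturbation and the convergence of the source is routine.
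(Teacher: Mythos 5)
Your scheme matches the paper's proof in structure (blow-up + Liouville), and the handling of the $\beta<2$ case and the projection/iteration estimates is essentially right. But there is a genuine gap in your treatment of the source term when $\beta>2$, and it is precisely the step you dismiss as ``routine.'' You assert that the inhomogeneous term in the equation for $v_m$, namely
$$\frac{r_m^{2-\beta}}{\theta(r_m)}\bigl(f_{k_m}-(\partial_t+\tilde L_{k_m})(p_{k_m,r_m}d_{k_m})\bigr)(r_m x,r_m^2 t),$$
tends to $0$, so that the limit $v$ solves the homogeneous equation. This is false for $\beta>2$: the functions $f_{k_m}$ and $(\partial_t+\tilde L_{k_m})(p_{k_m,r_m}d_{k_m})$ are merely $C^{\beta-2}_p$, with nonvanishing Taylor polynomials of parabolic degree up to $\lfloor\beta-2\rfloor$. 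The terms of that Taylor polynomial, after rescaling by $r_m^{2-\beta}$, carry factors $r_m^{2-\beta+|\alpha|_p}$ with $|\alpha|_p\le\beta-2$; these do not go to zero, and indeed are not even \emph{a priori} controlled by $\theta(r_m)$. Only the Taylor \emph{remainder}, which is $O(|x|^{\beta-2}+|t|^{\frac{\beta-2}{2}})$, contributes a vanishing source. The bound $|D^2 d|\le Cd^{\beta-2}$ from Lemma \ref{generalisedDistance}, which you invoke, is stated only for $\beta\in[1,2)$ and is not the right tool here anyway; for $\beta>2$ one uses $d(x,t)=x_n-F(x',t)\in C^\beta_p$ and Lemma \ref{polynomialAproximation}.

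The fix is the one the paper carries out: split the source as $P_m+\frac{1}{\theta(r_m)}o_{m,\beta}$, where $P_m\in\textbf{P}_{\lfloor\beta-2\rfloor,p}$ is the rescaled Taylor polynomial, and only the residue $\frac{1}{\theta(r_m)}o_{m,\beta}$ vanishes. One must then separately show that $\|P_m\|$ is uniformly bounded; the paper achieves this with the decomposition $v_m=v_{m,1}+v_{m,2}$ and Lemma \ref{boundednesOfPolynomial}, which controls the polynomial source of a solution vanishing on the parabolic boundary by the $L^\infty$ norm of the solution. The limit equation then reads $\partial_t v-\operatorname{tr}(A_0D^2 v)=P$ for a (generally nonzero) polynomial $P$, and Proposition \ref{Liouville} is stated precisely so that this polynomial source is allowed; the contradiction with the orthogonality relation then follows exactly as you say. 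So your overall architecture is correct and the conclusion is salvageable, but the claim that the limit equation is homogeneous is wrong for $\beta>2$, and the boundedness of $P_m$ is a nontrivial step that must be supplied. A minor point: the dichotomy $\theta(r)\to\infty$ here comes from Lemma \ref{copied4.3}, not Lemma \ref{copied4.5}; the latter is the version needed for Proposition \ref{harnackExpansion}.
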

	
	\begin{proof}
		Thanks to assumption on $\Omega$ (translations and scaling preserves the assumptions), we can assume that $(z,s) = (0,0).$ We prove the claim with contradiction argument. Assume that the claim is false. Then there exist $\Omega_k$ which are $C^\beta_p$ in $Q_1$ in the sense of Definition \ref{assumptionsOnDomain}, with $||F_k||_{C^\beta_p(Q_1')}\leq 1$ and $(0,0)\in\partial\Omega_k$, and $u_k,f_k,$ $L_k=\operatorname{tr}(A_kD^2) + b_k\cdot\nabla$, with uniformly bounded $||A_k||_{C^{\beta-2}_p(\overline{\Omega}_k)},||b_k||_{C^{\beta-2}_p(\overline{\Omega}_k)}$ if $\beta>2,$ and the modulus of continuity of $A_k$ independent of $k$, if $\beta<2$, satisfying $\partial_tu_k+L_ku_k = f_k$ in $\Omega_k\cap Q_1$. Moreover, $u_k=0$ on $\partial\Omega\cap Q_1,$ $|u_k|\leq 1$, $||f||_{C^{\beta-2}_p}\leq 1$ (or $|f_k|\leq d_k^{\beta-2}$ if $\beta<2$), $L_k$ is $(\lambda,\Lambda)$-uniformly elliptic, but for every polynomial $p_k\in\textbf{P}_{\lfloor\beta-1\rfloor,p}$ we have 
		$$\sup_{r>0}r^{-\beta}||u_k-p_kd_k||_{L^\infty(Q_r)}>k.$$
		We extend the functions $u_k$ and $d_k$ with zero in $Q_1\cap \Omega^c$ so that they are defined in the full cylinder $Q_1$, and denote them still $u_k,d_k$. Let $p_{k,\rho}d_k$ be the $L^2(Q_\rho)$ projection of $u_k$ to space $\textbf{P}_{\lfloor\beta-1\rfloor,p}d_k,$ so that we have
		$$\int_{Q_\rho} (u_k-p_{k,\rho}d_k)pd_k = 0,$$
		for every polynomial $p\in  \textbf{P}_{\lfloor\beta-1\rfloor,p}.$
		Then define the monotone quantity
		$$\theta(r) = \sup_k\sup_{\rho>r} \rho^{-\beta}||u_k-p_{k,\rho}d_k||_{L^\infty(Q_\rho)}.$$
		It follows from Lemma \ref{copied4.3} that $\lim_{r\downarrow0}\theta(r)=\infty.$
		Choose a sequence $k_m, r_m$ with $r_m\leq 1/m$, so that 
		$$\frac{1}{2}\theta(r_m)\leq r^{-\beta}||u_{k_m}-p_{k_m,r_m}d_{k_m}||_{L^\infty(Q_{r_m})},$$
		and define the blow-up sequence
		$$v_m(x,t) = \frac{1}{r_m^\beta\theta(r_m)}\big ( u_{k_m}(r_mx, r_m^2t) - p_{k_m,r_m}d_{k_m}(r_mx,r_m^2t) \big ).$$
		Notice that $||v_m||_{L^\infty(Q_1)}\geq 1/2$ and $\int_{Q_1} v_m p d_{k_m}(r_m\cdot,r_m^2\cdot) = 0$ for every polynomial $p\in\textbf{P}_{\lfloor\beta-1\rfloor,p}.$
		
		Let us now turn our attention to the polynomials $p_{k,\rho}$. We write 
		$$p_{k,\rho}(x,t) = \sum_{|\alpha|_p\leq \lfloor\beta-1\rfloor} p_{k,\rho}^{(\alpha)}(x,t)^\alpha,\quad p_{k,\rho}^{(\alpha)}\in\R.$$
		Using a rescaled version of \cite[Lemma A.10]{AR20} and that $d_k\geq c \rho$ in $Q_\rho\cap\{d_k>\rho/2\}$, we estimate for any $\alpha$ such that $|\alpha|_p\leq \lfloor\beta-1\rfloor$
		\begin{align*}
			\rho^{|\alpha|_p+1}|p_{k,\rho}^{(\alpha)}-p_{k,2\rho}^{(\alpha)}|\leq &C \rho ||p_{k,\rho}-p_{k,2\rho}||_{L^\infty(Q_\rho\cap\{d_k>\rho/2\})}\\
			\leq& C||p_{k,\rho}d_k-p_{k,2\rho}d_k||_{L^\infty(Q_\rho\cap\{d_k>\rho/2\})}\\
			\leq & C||u_k-p_{k,\rho}d_k||_{L^\infty(Q_\rho)} + C||u_k-p_{k,2\rho}d_k||_{L^\infty(Q_{2\rho})}\\
			\leq & C\theta(\rho) \rho^{\beta} + C\theta(2\rho)(2\rho)^\beta\leq C\theta(\rho)\rho^\beta,
		\end{align*}
		so that it holds
		$$|p_{k,\rho}^{(\alpha)}-p_{k,2\rho}^{(\alpha)}|\leq C\theta(\rho)\rho^{\beta-1-|\alpha|_p}.$$
		Iterating the inequality above we get for any $j\in\N$
		\begin{align*}
			|p_{k,\rho}^{(\alpha)}-p_{k,2^j\rho}^{(\alpha)}|\leq  \sum_{i=0}^{j-1}   |p_{k,2^i\rho}^{(\alpha)}-p_{k,2^{i+1}\rho}^{(\alpha)}|      \leq & C\sum_{i=0}^{j-1}\theta(2^i\rho)(2^i\rho)^{\beta-1-|\alpha|_p}\\
			\leq & C\theta(\rho) \rho^{\beta-1-|\alpha|_p}\sum_{i=0}^{j-1} \frac{\theta(2^i\rho)}{\theta(\rho)} 2^{i(\beta-1-|\alpha|_p)}\\
			\leq & C\theta(\rho)(2^j\rho)^{\beta-1-|\alpha|_p}.
		\end{align*}
		It follows that for any $R>1$, we have
		$$|p_{k,\rho}^{(\alpha)}-p_{k,R\rho}^{(\alpha)}|\leq C\theta(\rho)(R\rho)^{\beta-1-|\alpha|_p},$$
		and therefore
		$$||p_{k,\rho}d_k - p_{k,R\rho}d_k  ||_{L^\infty(Q_{R\rho})}\leq C\theta(\rho)(R\rho)^{\beta}.$$
		Hence
		\begin{align*}
			&||v_m||_{L^\infty(Q_R)} =  \frac{1}{r_m^{\beta}\theta(r_m)}||u_{k_m}-p_{k_m,r_m}d_{k_m}||_{L^\infty(Q_{Rr_m})}\\
			&\leq \frac{1}{r_m^{\beta}\theta(r_m)}\left(||u_{k_m}-p_{k_m,Rr_m}d_{k_m}||_{L^\infty(Q_{Rr_m})} + ||p_{k_m,Rr_m}d_{k_m}-p_{k_m,r_m}d_{k_m}||_{L^\infty(Q_{Rr_m})} \right)\\
			&\leq \frac{1}{r_m^{\beta}\theta(r_m)}\left( \theta(Rr_m)(Rr_m)^\beta + C\theta(r_m)(Rr_m)^\beta \right)\leq CR^\beta.
		\end{align*}
		Moreover for each $\rho>0$ we have 
		$$\frac{|p_{k,\rho}^{(\alpha)} - p_{k,2^j\rho}^{(\alpha)}|}{\theta(\rho)}\leq C\sum_{i=0}^{j} \frac{\theta(2^{j-i}\rho)}{\theta(\rho)}(2^{j-i}\rho)^{\beta-1-|\alpha|_p},$$
		and choosing $j\in \N$ such that $2^j\rho\in \left[ 1,2\right) $, we deduce 
		$$\frac{|p_{k,\rho}^{(\alpha)} - p_{k,2^j\rho}^{(\alpha)}|}{\theta(\rho)}\leq C\sum_{i=0}^{j} \frac{\theta(2^{-i}\rho)}{\theta(\rho)}(2^{-i})^{\beta-1-|\alpha|_p}\longrightarrow 0 \quad \text{as }\rho\downarrow0.$$
		Hence, since $p_{k,\rho}^{(\alpha)}$ is bounded for $\rho\in \left[ 1,2\right) $, we get
		$$\frac{|p_{k,\rho}^{(\alpha)}|}{\theta(\rho)}\longrightarrow 0\quad \text{as }\rho\downarrow0.$$
		
		We compute
		\begin{align*}
			(\partial_t+\tilde{L}_{k_m})v_m (x,t) &= \frac{r_m^{2-\beta}}{\theta(r_m)}\big ( f_{k_m} - (\partial_t - \tilde{L}_{k_m})(p_{k_m,r_m} d_{k_m}) \big )(r_mx, r_m^2t)\\
			&=\left\lbrace\begin{array}{cl}
			 \frac{r_m^{2-\beta}}{\theta(r_m)}o_{m,\beta}&\text{ if }\beta<2\\
			 P_m + \frac{1}{\theta(r_m)} o_{m,\beta}&\text{ if }\beta>2,
			\end{array}\right.
		\end{align*}
		where $P_m\in \textbf{P}_{\lfloor\beta-2\rfloor,p}$ is a suitable Taylor polynomial of quantities on the right-hand side.
		Note that $f_{k_m}$ and $ (\partial_t - \tilde{L}_{k_m})(p_{k_m,r_m} d_{k_m})$ are both $C^{\beta -2}_p$ functions, and they can be approximated with a polynomial up to order $|x|^{\beta-2}+|t|^{\frac{\beta-2}{2}}$ (see Lemma \ref{polynomialAproximation}), if $\beta>2$ and otherwise we bound the $L^\infty$ norm. The reminder we denote with $o_{m,\beta}$, so
		$$o_{m,\beta}(x,t) = \left\lbrace\begin{array}{cl}
		\big ( f_{k_m} - (\partial_t - \tilde{L}_{k_m})(p_{k_m,r_m} d_{k_m}) \big )(r_mx, r_m^2t)    &\text{ if }\beta<2\\
		\big ( f_{k_m}  - (\partial_t - \tilde{L}_{k_m})(p_{k_m,r_m} d_{k_m}) - P_m \big )(r_mx, r_m^2t)  &\text{ if }\beta>2,
		\end{array}\right.$$
		In the case $\beta<2$ we can estimate $\frac{r_m^{2-\beta}}{\theta(r_m)}|o_{m,\beta}|\leq C\frac{|p_{k_m,r_m}|}{\theta(r_m)} d_m^{\beta-2}(x,0)$, with $C$ independent of $m$, using the assumption on $f_m$,  estimates on $|\partial_t d|$ and $|D^2d_m|$ (see Lemma \ref{generalisedDistance}), while $\frac{p_{k_m,r_m}}{\theta(r_m)}$ converges to zero. In the case $\beta>2$, we have $|\theta(r_m)^{-1}o_{m,\beta}|\leq C\frac{||p_{k_m,r_m}||}{\theta(r_m)}(|x|^{\beta-2}+|t|^{\frac{\beta-2}{2}})$.
		Since $||v_m||_{L^\infty(Q_R)}\leq CR^\beta$ independently of $m$ and since $\frac{1}{\theta(r_m)} o_{m,\beta}$ are bounded uniformly in $m$, we get that $P_m$ are uniformly bounded as well (see the end of the proof of Proposition \ref{harnackExpansion}).
		Therefore in both cases the term with $o_{m,\beta}$ converges to zero in $L^\infty_\loc(\{x_n>0\}).$ Thanks to Corollary \ref{lipschitzBounds} we get uniform Lipschitz bounds for $v_m$ on every $Q_R$. Passing to a subsequence, the convergence result \cite[Theorem 1.1]{Ba05} assures the local uniform convergence of $v_m$ to some function $v$ defined in $\R^{n+1}\cap\{x_n>0\},$ satisfying
		$$\left\lbrace\begin{array}{rcll}
		(\partial_t+L)v &=& P&\text{in }x_n>0\\
		v&=&0&\text{in }x_n=0,\\
		||v||_{L^\infty(Q_R^+)}&\leq &CR^\beta&\\
		||v||_{L^\infty(Q_1^+)}&\geq& \frac{1}{2}&\\
		\int_{Q_1}v px_n &=& 0& \text{for every }p\in  \textbf{P}_{\lfloor\beta-1\rfloor,p},
		\end{array}\right.$$
		where $L$ is a constant coefficient, second order $(\lambda,\Lambda)$-elliptic operator and $P \in \textbf{P}_{\lfloor\beta-2\rfloor,p}$ if $\beta>2$ and $P=0$ if $\beta<2.$
		Hence by Liouville theorem (Proposition \ref{Liouville}) $v$ has to be equal to $qx_n$ for some polynomial $q$ in $ \textbf{P}_{\lfloor\beta-1\rfloor,p}$, which gives the contradiction.
	\end{proof}

	\begin{remark}
		When $\beta\in(1,2)$ the function $d$ is taken from Lemma \ref{generalisedDistance}. When $\beta>2$, we take the composition of the boundary flattening map ($\phi(x',x_n,t) = (x',x_n-F(x',t),t),$ with the $n$-th projection. Concretely, $d(x,t)=x_n-F(x',t)$, which is $C^\beta_p(Q_1).$ In both cases we get that $r^{-1}d(rx,r^2t)$ converges to $x_n$ locally uniformly.
		
		Additionally, the estimate $d(rx,r^2t)\leq C rd(x,rt)\leq C_1rd(x,0)$, works because the boundary is flat enough.
	\end{remark} 

	The obtained description of solutions near the boundary implies the regularity up to the boundary, as follows.
	
	\begin{corollary}\label{boundaryRegularity}
		Let $\beta>1$, $\beta\not\in\N$. Let $\Omega\subset\R^{n+1}$ be $C^\beta_p$ in $Q_1$ in the sense of Definition \ref{assumptionsOnDomain}. Let $L$ be an operator of the form \eqref{oparatorForm} satisfying conditions \eqref{coefficientCondition}, with $A,b\in C^{\beta-2}_p(\overline{\Omega})$, if $\beta>2$ and $A\in C^0(\overline{\Omega})$ if $\beta<2$. Let $u$ be a solution to 
		$$\left\lbrace\begin{array}{rcl l}
		\partial_t u + Lu & =& f &\text{in } \Omega\cap Q_1\\
		u& =& 0 &\text{on } \partial\Omega\cap Q_1,\\ 
		\end{array} \right.$$
		with $||f||_{C^{\beta-2}_p(\overline{\Omega}\cap Q_1)}\leq C_0$ if $\beta>2$ and $|f|\leq C_0 d^{\beta-2}$ if $\beta<2.$ 
		
		Then 
		$$\left| \left| u\right|\right|_{C^\beta_p(\Omega\cap Q_{1/2})}\leq CC_0.$$
		The constant $C$ depends only on $n,\beta,\Omega$, ellipticity constants and $||A||_{C^{\beta-2}_p(\overline{\Omega})}, ||b||_{C^{\beta-2}_p(\overline{\Omega})}$ if $\beta>2$, and the modulus of continuity of $A$, if $\beta<2$.
	\end{corollary}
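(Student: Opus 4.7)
The plan is to extract the corollary from Proposition~\ref{shauderExpansion} by combining the boundary polynomial expansion with interior estimates on cylinders of size comparable to the parabolic distance to $\partial\Omega$, and then assembling the two via Lemma~\ref{fullRegularityFromCylinders}. The key observation is that the expansion $|u(x,t) - p_{(z,s)}(x,t)\,d(x,t)| \leq C(|x-z|^\beta + |t-s|^{\beta/2})$ already encodes the correct $C^\beta_p$ Taylor jet of $u$ at every boundary point, so what remains is to produce matching interior derivative bounds and to glue the two pieces.

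First, I would fix an interior point $(x_0,t_0)\in\Omega\cap Q_{1/2}$, choose a boundary point $(z,s)$ whose parabolic distance to $(x_0,t_0)$ is comparable to $r:=d(x_0,t_0)$, and work on a cylinder $Q_{r/C}(x_0,t_0)\subset\Omega$ for a dimensional $C$. The difference $v:=u-p_{(z,s)}\,d$ solves $\partial_t v + Lv = f - (\partial_t + L)(p_{(z,s)}\,d)$, and the right-hand side is controlled by $C\,C_0$ -- using the $C^{\beta-2}_p$ hypothesis on $f$ when $\beta>2$, or the pointwise $d^{\beta-2}$ bound when $\beta<2$, together with the derivative estimates on $d$ from Lemma~\ref{generalisedDistance} and a uniform bound on the coefficients of $p_{(z,s)}$. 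Proposition~\ref{shauderExpansion} itself gives $\|v\|_{L^\infty(Q_r(x_0,t_0))}\leq C\,C_0\,r^\beta$. After the parabolic rescaling $\tilde v(x,t) := r^{-\beta} v(x_0+rx, t_0+r^2 t)$, the interior estimates cited in the preliminaries (Krylov--Safonov type via \cite{PS19} when $1<\beta<2$, higher order Schauder via \cite{Li96} when $\beta>2$) yield $\|\tilde v\|_{C^\beta_p(Q_{1/2})}\leq C\,C_0$, which translates back to the $r$-scaled bounds on every parabolic derivative of $v$ inside $Q_{r/2}(x_0,t_0)$ dictated by the $C^\beta_p$ norm.

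Since $d\in C^\beta_p$ by Lemma~\ref{generalisedDistance} and $p_{(z,s)}$ is a polynomial of parabolic degree $\lfloor\beta-1\rfloor$ with uniformly bounded coefficients, the product $p_{(z,s)}\,d$ is $C^\beta_p$ with a uniform bound; hence the decomposition $u = v + p_{(z,s)}\,d$ gives uniform cylinder-wise $C^\beta_p$ interior estimates on every cylinder of radius comparable to the distance to $\partial\Omega$. These interior estimates, together with the pointwise boundary expansion, match the hypotheses of Lemma~\ref{fullRegularityFromCylinders}, which assembles them into the global $C^\beta_p(\Omega\cap Q_{1/2})$ bound. The main obstacle I anticipate is the uniformity in $(z,s)$: one needs a bound on the coefficients of $p_{(z,s)}$ independent of the base point. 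This is essentially a by-product of the contradiction/blow-up proof of Proposition~\ref{shauderExpansion} (or can be obtained directly from $\|u\|_{L^\infty}\leq C_0$ and the non-degeneracy $d\geq c\operatorname{dist}_p(\cdot,\partial\Omega)$ via a rescaled version of Lemma~\ref{uniformBoundOnPolynomials}), and it ensures that the final constant depends only on the quantities listed in the statement.
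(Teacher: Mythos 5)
Your proposal matches the paper's own proof almost step for step: the same decomposition $v = u - p_{(z,t_0)}d$ from Proposition~\ref{shauderExpansion}, the same rescaling to cylinders at distance-comparable radius, the same appeal to interior estimates (\cite{PS19} for $\beta<2$, higher order interior Schauder for $\beta>2$), the same uniform control of the polynomial coefficients via Lemma~\ref{uniformBoundOnPolynomials}, and finally Lemma~\ref{fullRegularityFromCylinders} to assemble the global bound. This is the correct argument and not a genuinely different route.
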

	
	\begin{proof}
		Dividing the equation with $C_0$ if necessary, we may assume that $C_0=1$.
		Thanks to Lemma \ref{fullRegularityFromCylinders}, it is enough to prove
		$$\left[ u\right]_{C^\beta_p(Q_r(x_0,t_0))}\leq C,$$
		whenever $d_{t_0}(x_0) = |x_0-z|\leq Cr$, with $C$ independent of $x_0,t_0,r$, and $Q_{2r}(x_0,t_0)\subset\Omega\cap Q_{1/2}$. To prove that, take $p_{(z,t_0)}$ from Proposition \ref{shauderExpansion}, and define 
		$$u_r(x,t) := (u-p_{(z,t_0)}d)(x_0+2rx, t_0+(2r)^2t), \quad (x,t)\in Q_1.$$
		Then in the case $\beta>2$ use interior regularity estimates from Proposition \ref{interiorEstimates}, to get
		$$\left[ u_r\right]_{C^{\beta}_p(Q_{1/2})}\leq C\left(r^\beta||f +(\partial_t+L)(p_{(z,t_0)}d)||_{C^{\beta-2}_p(\Omega\cap Q_1)} + ||u - p_{(z,t_0)}d||_{L^\infty(Q_{2r}(x_0,t_0))}  \right)$$
		and hence 
		$$\left[ u-p_{(z,t_0)}d \right]_{C^{\beta}_p(Q_r(x_0,t_0))}\leq C,$$ thanks to Proposition \ref{shauderExpansion}, the fact that $p_{(z,t_0)}$ are uniformly bounded (see Lemma \ref{uniformBoundOnPolynomials}) and the fact that $d\in C^{\beta}_p(\overline{\Omega}\cap Q_1).$
		
		In case $\beta<2$, use \cite[Theorem 2]{PS19}
		to get
		$$\left[ u_r\right]_{C^{\beta}_p(Q_{1/2})}\leq C\left(r^2||f +(\partial_t+L)(p_{(z,t_0)}d)||_{L^\infty(Q_{2r}(x_0,t_0))} + ||u - p_{(z,t_0)}d||_{L^\infty(Q_{2r}(x_0,t_0))}  \right)$$
		and hence due to similar reasons as before
		$$\left[ u-p_{(z,t_0)}d \right]_{C^{\beta}_p(Q_r(x_0,t_0))}\leq C.$$
		
		This implies that 
		$$\left[ u\right]_{C^{\beta}_p(Q_r(x_0,t_0))}\leq C,$$
		thanks to regularity of $d$ and boundedness of $p_{(z,t_0)},$ as wanted.
	\end{proof}

	We can now establish expansions of one solution with respect to the other. This result is the key ingredient to prove the boundary Harnack estimate. It is the higher order version of Proposition \ref{HarnackExpansion0} Since the order exceeds $2$, the constant in the expansion is replaced with a polynomial, which causes some difficulties in the proof. 
	
	\begin{proposition}\label{harnackExpansion}
		Let $\beta>1$, $\beta\not\in\N$. Let $\Omega\subset\R^{n+1}$ be $C^\beta_p$ in $Q_1$ in the sense of Definition~\ref{assumptionsOnDomain}, with $||F||_{C^{\beta}_p(Q_1')}\leq 1$.  Let $L$ be an operator of the form \eqref{oparatorForm} satisfying conditions \eqref{coefficientCondition}, with $A,b\in C^{\beta-1}_p(\overline{\Omega})$. For $i=1,2$ let $u_i$ be a solution to 
		$$\left\lbrace\begin{array}{rcl l}
		\partial_t u_i + Lu_i & =& f_i &\text{in } \Omega\cap Q_1\\
		u_i& =& 0 &\text{on } \partial\Omega\cap Q_1,\\ 
		\end{array} \right.$$
		with $f_i\in C^{\beta-1}_p(\overline{\Omega}\cap Q_1)$. Assume that $|u_2|\geq c_0 d$ with $c_0>0$, $||u_i||_{L^\infty(\Omega)}\leq 1$ and $||f_i||_{C^{\beta-1}_p(\overline{\Omega})}\leq 1$. Then for every $(z,s)\in\partial\Omega\cap Q_{1/2}$ exists a polynomial $p_{(z,s)}\in \textbf{P}_{\lfloor\beta\rfloor,p}$, so that 
		$$|u_1(x,t)-p_{(z,s)}(x,t)u_2(x,t)|\leq C\big (|x-z|^{\beta+1} + |t-s|^{\frac{\beta+1}{2}}\big ).$$
		The constant $C$ depends only on $n,\beta,c_0$, $||A||_{C^{\beta-1}_p(\overline{\Omega})},||b||_{C^{\beta-1}_p(\overline{\Omega})}$ and ellipticity constants.
		
		Moreover for every $(x_0,t_0)\in \Omega\cap Q_{1/2}$, such that $d_t(x_0) = |x_0-z| = c_\Omega r,$ we have 
		\begin{equation}\label{interRegForExpansion}
			\left[ u_1 - p_{(z,t_0)} u_2 \right] _{C^{\beta+1}_p(Q_r(x_0,t_0))}\leq C.
		\end{equation}
	\end{proposition}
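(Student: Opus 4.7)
The plan is to mimic the blow-up and contradiction argument of Proposition \ref{shauderExpansion} and Proposition \ref{HarnackExpansion0}, but with the distance $d$ replaced by the second solution $u_2$, and with the constant $c_{(z,s)}$ replaced by the polynomial $p_{(z,s)}\in\textbf{P}_{\lfloor\beta\rfloor,p}$. First, by translation invariance we reduce to $(z,s)=(0,0)$. Suppose for contradiction that the statement fails along sequences $(\Omega_k, u_{1,k}, u_{2,k}, f_{1,k}, f_{2,k}, L_k)$ satisfying all the hypotheses with uniform constants. Extend everything by zero outside $\Omega_k$, and let $p_{k,\rho}\in\textbf{P}_{\lfloor\beta\rfloor,p}$ be the $L^2(Q_\rho)$ projection of $u_{1,k}$ onto the subspace $\textbf{P}_{\lfloor\beta\rfloor,p}\cdot u_{2,k}$, i.e.\
$$\int_{Q_\rho}(u_{1,k}-p_{k,\rho}u_{2,k})\,q\,u_{2,k}=0\quad\text{for every }q\in\textbf{P}_{\lfloor\beta\rfloor,p}.$$
Introduce the monotone quantity $\theta(r)=\sup_k\sup_{\rho>r}\rho^{-\beta-1}\|u_{1,k}-p_{k,\rho}u_{2,k}\|_{L^\infty(Q_\rho)}$; by the failure of the thesis and a standard argument (as in Lemma \ref{copied4.3}/\ref{copied4.5}) one has $\theta(r)\to\infty$ as $r\downarrow0$. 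Pick $k_m,r_m$ with $r_m\le 1/m$ attaining half the supremum and define the blow-ups
$$v_m(x,t)=\frac{1}{r_m^{\beta+1}\theta(r_m)}\bigl(u_{1,k_m}-p_{k_m,r_m}u_{2,k_m}\bigr)(r_m x, r_m^2 t).$$

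The key estimate is the coefficient control: using the nondegeneracy $u_{2,k}\ge c_0 d_k$ together with a rescaled version of \cite[Lemma A.10]{AR20} applied on $Q_\rho\cap\{d_k>\rho/2\}$ (where $u_{2,k}$ is comparable to $\rho$), one proves, for each multi-index $\alpha$ with $|\alpha|_p\le\lfloor\beta\rfloor$,
$$|p_{k,\rho}^{(\alpha)}-p_{k,2\rho}^{(\alpha)}|\le C\,\theta(\rho)\,\rho^{\beta-|\alpha|_p}.$$
Summing a geometric series as in the proof of Proposition \ref{shauderExpansion} yields $\|p_{k,\rho}u_{2,k}-p_{k,R\rho}u_{2,k}\|_{L^\infty(Q_{R\rho})}\le C\theta(\rho)(R\rho)^{\beta+1}$ for all $R>1$, from which one reads off the growth bound $\|v_m\|_{L^\infty(Q_R)}\le CR^{\beta+1}$, together with $\|v_m\|_{L^\infty(Q_1)}\ge 1/2$ and the orthogonality $\int_{Q_1}v_m\,q\,u_{2,k_m}(r_m\cdot,r_m^2\cdot)=0$ for every $q\in\textbf{P}_{\lfloor\beta\rfloor,p}$. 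The usual comparison with scales $2^j\rho\in[1,2)$ shows that $|p_{k,\rho}^{(\alpha)}|/\theta(\rho)\to0$ as $\rho\downarrow0$, uniformly in $k$.

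The main obstacle is controlling the right-hand side of the equation for $v_m$, since now $u_2$ is a solution rather than the explicit function $d$. Expanding by the Leibniz rule,
$$(\partial_t+L_{k_m})(p_{k_m,r_m}u_{2,k_m})=p_{k_m,r_m}f_{2,k_m}+u_{2,k_m}(\partial_t+L_{k_m})p_{k_m,r_m}-2\sum_{i,j}a_{ij,k_m}\,\partial_i p_{k_m,r_m}\,\partial_j u_{2,k_m}.$$
After rescaling the equation becomes
$$(\partial_t+\tilde L_{k_m})v_m(x,t)=\frac{r_m^{1-\beta}}{\theta(r_m)}\bigl(f_{1,k_m}-p_{k_m,r_m}f_{2,k_m}-u_{2,k_m}(\partial_t+L_{k_m})p_{k_m,r_m}+2a_{ij,k_m}\partial_i p_{k_m,r_m}\partial_j u_{2,k_m}\bigr)(r_m x, r_m^2 t).$$
Using that $f_{i,k_m}, A_{k_m}, b_{k_m}\in C^{\beta-1}_p$ and, by Corollary \ref{boundaryRegularity} applied with exponent $\beta+1$, that $u_{2,k_m}\in C^{\beta+1}_p(\overline\Omega_{k_m}\cap Q_{1/2})$ with uniform bounds, one Taylor-expands each factor: $f_{i,k_m}$ and the coefficients to parabolic order $\lfloor\beta-1\rfloor$ and $u_{2,k_m}$ together with its first derivatives to order $\lfloor\beta\rfloor$. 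The polynomial pieces combine into a single polynomial $P_m\in\textbf{P}_{\lfloor\beta-1\rfloor,p}$ (up to the factor $\theta(r_m)^{-1}$ on the non-leading terms), and the remainder is $o(1)$ in $L^\infty_\loc(\{x_n>0\})$ because of the estimate $|p_{k_m,r_m}^{(\alpha)}|/\theta(r_m)\to0$ established above. Uniform bounds on $P_m$ follow from the uniform bound $\|v_m\|_{L^\infty(Q_R)}\le CR^{\beta+1}$ together with interior estimates, exactly as at the end of the proof of Proposition \ref{shauderExpansion}.

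The boundary-flattening change of variables combined with Corollary \ref{lipschitzBounds} gives uniform local Hölder bounds for $v_m$, so a subsequence converges locally uniformly on $\R^n\times(-\infty,\infty)\cap\{x_n\geq 0\}$ to a limit $v$, and $u_{2,k_m}(r_m\cdot,r_m^2\cdot)/r_m\to\mu x_n$ for some $\mu\ne 0$. By the stability result \cite[Theorem 1.1]{Ba05}, $v$ solves a limiting constant coefficient equation
$$\begin{cases}\partial_t v-\operatorname{tr}(AD^2 v)=P & \text{in }\{x_n>0\},\\ v=0 & \text{on }\{x_n=0\},\end{cases}$$
with $P\in\textbf{P}_{\lfloor\beta-1\rfloor,p}$, the growth control $\|v\|_{L^\infty(Q_R^+)}\le CR^{\beta+1}$, the normalization $\|v\|_{L^\infty(Q_1^+)}\ge 1/2$, and the orthogonality $\int_{Q_1}v\,q\,(x_n)_+=0$ for every $q\in\textbf{P}_{\lfloor\beta\rfloor,p}$. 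The Liouville result (Proposition \ref{Liouville}) with $\gamma=\beta+1$ then forces $v=q(x,t)\,x_n$ for some $q\in\textbf{P}_{\lfloor\beta\rfloor,p}$, and the orthogonality condition kills $q$, contradicting $\|v\|_{L^\infty(Q_1^+)}\ge 1/2$. Finally, \eqref{interRegForExpansion} is obtained by applying the interior Schauder estimate \cite[Theorem 2]{PS19} to the rescaled function $(u_1-p_{(z,t_0)}u_2)(x_0+2rx,t_0+4r^2t)$ on $Q_{1/2}$, using the expansion together with the uniform bound on $\|p_{(z,t_0)}\|$ provided by Lemma \ref{uniformBoundOnPolynomials}.
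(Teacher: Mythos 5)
Your overall architecture (blow-up, projection, coefficient control, passage to a half-space Liouville problem) matches the paper's, but you diverge at a structural point, and your divergence introduces an error.

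The paper does not project onto $\textbf{P}_{\lfloor\beta\rfloor,p}\cdot u_2$. Using Proposition~\ref{shauderExpansion} to write $u_2 = p_2\,d + v_2$ with $|v_2|\lesssim(|x|+|t|^{1/2})^\beta$, and splitting $p = p^{(0)} + p^{(1)}$ into constant and non-constant parts, the paper first shows that the target estimate is equivalent to the estimate
$$|u_1 - \tilde p^{(0)} u_2 - \tilde p^{(1)} d| \le C\big(|x|^{\beta+1}+|t|^{\frac{\beta+1}{2}}\big),$$
i.e.\ it projects onto $\{q^{(0)}u_2 + q^{(1)}d:\ q\in\textbf{P}_{\lfloor\beta\rfloor,p}\}$. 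The whole point of this reduction is that $(\partial_t + L)$ applied to $\tilde p^{(0)} u_2$ produces cleanly $\tilde p^{(0)} f_2$ (a scalar times $f_2\in C^{\beta-1}_p$), while $(\partial_t + L)(\tilde p^{(1)} d)$ is handled via the \emph{explicit} regularized distance $d$: one uses $d\in C^\beta_p$, $|\partial_t d|+|D^2 d|\le C d^{\beta-2}$ for $\beta\in(1,2)$, and crucially the fact that $\tilde p^{(1)}$ vanishes at the origin so that the lower-order Taylor error of $(\partial_t+L)d$ picks up an extra factor $|x|+|t|$. No derivatives of the (only $C^\beta_p$) function $u_2$ ever need to be Taylor-expanded. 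This is precisely why Lemma~\ref{copied4.5} is stated for the mixed space $\{q^{(0)}u_2 + q^{(1)}d\}$.

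By working instead with $p\,u_2$ and Leibniz, your right-hand side contains the extra terms $u_2(\partial_t+L)p$ and $-2\sum a_{ij}\partial_i p\,\partial_j u_2$. To Taylor-expand these, you invoke ``Corollary~\ref{boundaryRegularity} applied with exponent $\beta+1$'' to conclude $u_{2,k_m}\in C^{\beta+1}_p$. This is false: Corollary~\ref{boundaryRegularity} with exponent $\gamma$ requires the domain to be $C^\gamma_p$, and here $\Omega$ is only assumed $C^\beta_p$. The optimal regularity obtainable is $u_2\in C^\beta_p$ (Corollary~\ref{boundaryRegularity} with exponent $\beta$, using only $f_2\in C^{\beta-2}_p\supset C^{\beta-1}_p$), so $\nabla u_2\in C^{\beta-1}_p$ and nothing better. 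You also implicitly need a version of Lemma~\ref{copied4.5} for the space $\{q\,u_2\}$, which is not the statement proved in the appendix.

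That said, the flaw is in the justification, not necessarily in the conclusion: a more careful accounting shows that $u_2\in C^\beta_p$ does suffice for your Leibniz expansion. In the term $u_2(\partial_t+L)p$, the lowest-degree part of $(\partial_t+L)p$ is a constant, so only $u_2$ itself needs expansion, and $u_2\in C^\beta_p$ with $u_2(0,0)=0$ gives error $O\big(\|p\|\,(|x|+|t|^{1/2})^{\beta}\big)$, which after rescaling is $o(1)$ since $\|p\|/\theta(r_m)\to 0$ and $r_m\to 0$. In $a_{ij}\partial_i p\,\partial_j u_2$, the lowest-degree part of $\partial_i p$ is a constant, so $\partial_j u_2\in C^{\beta-1}_p$ suffices for an error $O(\|p\|(|x|+|t|^{1/2})^{\beta-1})$, which again vanishes after dividing by $r_m^{\beta-1}\theta(r_m)$. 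So your route can be repaired, but as written it rests on a regularity statement that the hypotheses cannot deliver, and hides exactly the difficulty that the paper's $p^{(0)}u_2+p^{(1)}d$ decomposition was designed to sidestep.

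Everything else (coefficient estimates via \cite[Lemma~A.10]{AR20} on $Q_\rho\cap\{d>\rho/2\}$, the bound $\|v_m\|_{L^\infty(Q_R)}\le CR^{\beta+1}$, a-posteriori boundedness of $P_m$ via the split $v_m=v_{m,1}+v_{m,2}$ and Lemma~\ref{boundednesOfPolynomial}, the convergence $u_{2,k_m}(r_m\cdot,r_m^2\cdot)/r_m\to\mu(x_n)_+$, the Liouville step, and the derivation of \eqref{interRegForExpansion} from interior Schauder plus Lemma~\ref{uniformBoundOnPolynomials}) matches the paper's argument.
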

	
	\begin{proof}
		With the same reasoning as in Proposition \ref{shauderExpansion} we can assume $(z,s)=(0,0)$. 
		
		We write 
		$$p(x,t) = \sum_{|\alpha|_p\leq \beta} p_{\alpha}\cdot(x,t)^\alpha= p^{(0)}+\sum_{1\leq|\alpha|_p\leq \beta} p_{\alpha}(x,t)^\alpha= p^{(0)}+ p^{(1)}(x,t),$$
		for some constants $p_\alpha$.
		In view of Proposition \ref{shauderExpansion}
		$$u_2(x,t) = p_2(x,t)d(x,t) + v_2(x,t),\quad\quad p_2\in \textbf{P}_{\lfloor\beta-1\rfloor,p},\text{  }|v_2(x,t)|\leq C(|x|^{\beta}+|t|^{\frac{\beta}{2}}),$$
		and then the claim is equivalent to 
		$$\left|u_1(x,t) - p^{(0)}u_2(x,t) - p^{(1)}(x,t) p_2(x,t) d(x,t) - p^{(1)}(x,t)v_2(x,t)\right|\leq C\big (|x-z|^{\beta+1} + |t-s|^{\frac{\beta+1}{2}}\big ),$$
		which is furthermore equivalent to 
		$$\left|u_1(x,t) - \tilde{p}^{(0)}u_2(x,t) - \tilde{p}^{(1)}(x,t)  d(x,t)\right|\leq C\big (|x-z|^{\beta+1} + |t-s|^{\frac{\beta+1}{2}}\big ),$$
		for a suitable polynomial $\tilde{p}\in\textbf{P}_{\lfloor\beta\rfloor,p}$.
		
		We prove that by contradiction. So suppose for $i=1,2$, and $k\in\N$ there exist $\Omega_k,$ which are $C^\beta_p$ in $Q_1$ in the sense of Definition \ref{assumptionsOnDomain}, with $0\in\partial\Omega_k$ and $||F_k||_{C^\beta_p(Q_1')}\leq 1$, $u_{i,k},$ $f_{i,k}$ and $(\lambda,\Lambda)$-elliptic operator $L_k = -\operatorname{tr}(A_kD^2) + b_k\cdot\nabla$, with uniformly bounded norms $||A_k||_{C^{\beta-1}_p(\overline{\Omega}_k)},||b_k||_{C^{\beta-1}_p(\overline{\Omega}_k)}$, so that 
		$||u_{i,k}||_{L^\infty}\leq 1,$ $||f_{i,k}||_{C^{\beta-1}_p}\leq 1,$ $u_{2,k}\geq c_0d$ and 
		$$\left\lbrace\begin{array}{rcl l}
		\partial_t u_{i,k} + L_ku_{i,k} & =& f_{i,k} &\text{in } \Omega_k\cap Q_1\\
		u_{i,k}& =& 0 &\text{on } \partial\Omega_k\cap Q_1, 
		\end{array} \right.$$
		but 
		$$\sup_k\sup_{r>0} r^{-\beta-1}||u_{1,k}-p^{(0)}_ku_{2,k} - p^{(1)}_kd||_{L^\infty(Q_r)}  = \infty$$
		for every choice of  polynomials $p_k\in \textbf{P}_{\lfloor\beta\rfloor,p}$. We extend $u_{i,k}$ and $d_k$ to functions defined on whole $Q_1$ with zero outside of $\Omega_k$.
		Define $p_{k,r}$ as the $L^2(Q_r)$ projection of $u_{1,k}$ on the space $\{q^{(0)}u_{2,k} + q^{(1)}d_k; \text{ }q\in \textbf{P}_{\lfloor\beta\rfloor,p} \}$, so that 
		$$\int_{Q_r}(u_{1,k}-p_{k,r}^{(0)}u_{2,k}-p_{k,r}^{(1)}d_k)(q^{(0)}u_{2,k}+q^{(1)}d_k)=0$$
		for every polynomial $q\in\textbf{P}_{\lfloor\beta\rfloor,p}.$
		Furthermore, we define
		$$\theta(r) = \sup_k\sup_{\rho>r}\rho^{-\beta-1}||u_{1,k}-p_{k,\rho}^{(0)}u_{2,k} - p_{k,\rho}^{(1)}d_k||_{L^\infty(Q_\rho)}.$$
		Lemma \ref{copied4.5}, ensures that $\theta(r)\to \infty$ as $r\downarrow 0$. Pick a sequence $r_m, k_m$, with $r_m\leq 1/m$, so that 
		$$\frac{1}{r_m^{\beta+1}\theta(r_m)}||u_{1,k_m}-p_{r_m,k_m}^{(0)}u_{2,k_m} - p_{r_m,k_m}^{(1)}d_{k_m}||_{L^\infty(Q_{r_m})}\geq \frac{1}{2}.$$
		We define the blow-up sequence
		$$v_m(x,t) := \frac{1}{r_m^{\beta+1}\theta(r_m)}\left( u_{1,k_m}-p_{r_m,k_m}^{(0)}u_{2,k_m} - r_{r_m,k_m}^{(1)}d_{k_m}\right)(r_mx,r_m^2t).$$
		Note that $||v_m||_{L^\infty(Q_1)}\leq 1/2$, as well as $\int_{Q_1}v_m(q^{(0)}u_{2,k_m}+q^{(1)}d_{k_m}) = 0$ for every polynomial $q\in \textbf{P}_{\lfloor\beta\rfloor,p}.$
		With same arguments as in Proposition \ref{shauderExpansion}, we get $||v_m||_{L^\infty(Q_R)}\leq C R^{\beta+1}$, and $\frac{|p_{r,k}^{(\alpha)}|}{\theta(r)}\to 0$ uniformly in $k$ as $r\downarrow 0.$ 
		
		Now we turn our attention to 
		$$(\partial_t+\tilde{L}_{k_m})v_m(x,t) = \frac{1}{r_m^{\beta-1}\theta(r_m)}\left(f_{1,k_m} - p_{k_m,r_m}^{(0)}f_{2,k_m} - (\partial_t+\tilde{L}_{k_m})(p_{k_m,r_m}^{(1)}d_{k_m})\right)(r_mx,r_m^2t).$$
		We want to approximate the right-hand side as well as possible with a polynomial. By assumption $f_{i,k_m}\in C^{\beta-1}_p$ and hence we can approximate it up to order $|x|^{\beta-1} +|t|^{\frac{\beta-1}{2}}$, that is 
		$$|f_{i,m}(x,t) - P_{i,m}(x,t)|\leq C\left(|x|^{\beta-1} + |t|^{\frac{\beta-1}{2}}\right),$$ 
		for suitable polynomials $P_{i,m}$.
		Let us turn now to the term with the distance function
		\begin{align*}
			(\partial_t+\tilde{L}_{k_m})(p_{k_m,r_m}^{(1)}d_{k_m}) = &(\partial_t+\tilde{L}_{k_m})p_{k_m,r_m}^{(1)} d_{k_m} + p_{k_m,r_m}^{(1)}(\partial_t+\tilde{L}_{k_m}) d_{k_m} \\ & - 2 \nabla p_{k_m,r_m}^{(1)} A_{k_m} \nabla d_{k_m}
		\end{align*}
		Let us first consider the case $\beta>2$. Then $d_{k_m}\in C^{\beta}_p$, and hence $\nabla d_{k_m}\in C^{\beta-1}_p$, and $(\partial_t+\tilde{L}_{k_m})d_{k_m}\in C^{\beta-2}_p$. 
		Therefore $d_{k_m}$ and $\nabla d_{k_m}$ can be approximated up to order $|x|^{\beta-1}+|t|^{\frac{\beta-1}{2}}$, while the remaining term can only be approximated up to order $|x|^{\beta-2}+|t|^{\frac{\beta-2}{2}}$. But this term is multiplied with $p_{k_m,r_m}^{(1)}$, which can be estimated with $|x|+|t|$, and hence we get
		$$ \left|(\partial_t + \tilde{L}_{k_m})(p_{k_m,r_m}^{(1)}d_{k_m})(x,t) - P'_m(x,t)\right|\leq C \left(|x|^{\beta-1} + |t|^{\frac{\beta-1}{2}}\right).$$
		Thus we deduce
		$$ |(\partial_t+\tilde{L}_{k_m})v_m(x,t)-P_m(x,t)| \leq \frac{C ||p_{k_m,r_m}||}{\theta(r_m)}\left(|x|^{\beta-1} + |t|^{\frac{\beta-1}{2}}\right),$$
		for a suitable polynomial $P_m\in \textbf{P}_{\lfloor\beta-1\rfloor,p}.$ Note that $\frac{ ||p_{k_m,r_m}||}{\theta(r_m)}\to 0$, as $m\to\infty.$
		
		In the case $\beta\in(1,2)$, we have $d_{k_m}\in C^{\beta}$, and $|D^2d_{k_m}|\leq Cd^{\beta-2}.$ Hence we get 
		$$|(\partial_t+\tilde{L}_{k_m})v_m(x,t)-P_m(x,t)| \leq \frac{C ||p_{k_m,r_m}||}{\theta(r_m)}\left(|x|^{\beta-1} + |t|^{\frac{\beta-1}{2}} + (|x|+|t|)d_{k_m}^{\beta-2}\right).$$
		
		In both cases we get local uniform convergence of the residue $o_m:=(\partial_t+\tilde{L}_{k_m})v_m(x,t)-P_m(x,t)$ to zero. Denote $\Omega_m = \{(x,t);\text{ }(r_mx,r_m^2t)\in \Omega_{k_m} \} $ and split $v_m = v_{m,1}+v_{m,2}$, with
		$$\left\lbrace\begin{array}{rcl l}
			(\partial_t  + \tilde{L}_{k_m})v_{m,1} & =& P_m &\text{in } \Omega_m\cap Q_1\\
			v_{m,1}& =& 0 &\text{in } \partial_p(\Omega_{m}\cap Q_1), 
		\end{array} \right.$$ and 
		$$		\left\lbrace\begin{array}{rcl l}
		(\partial_t  + \tilde{L}_{k_m})v_{m,2} & =& o_m &\text{in } \Omega_{m}\cap Q_1\\
		v_{m,2}& =& v_m &\text{in } \partial_p(\Omega_{m}\cap Q_1).
		\end{array} \right.$$ 
		Note that the existence of $v_{m,1},v_{m,2}$ is given by \cite[Theorem 5.15]{Li96}.
		Since $v_m$ are bounded independently of $m$, we can get a bound $|v_{m,2}|\leq C d_{m},$ with $C$ independent of $m$, using a barrier constructed in Lemma \ref{barrier2}. Since $\Omega_m$ are closer and closer to $\{x_n>0\},$ we can estimate $|v_{m,2}|\leq C$, independently of $m$. Hence we have uniform bounds for $||v_{m,1}||_{L^\infty(Q_1)}$ as well, which by Lemma \ref{boundednesOfPolynomial} implies also the uniform boundedness of $||P_m||.$ Then we apply Corollary \ref{lipschitzBounds} rescaled to any $Q_R$, to get uniform Lipschitz bounds for $v_m$. Together with the convergence result \cite[Theorem 1.1]{Ba05} this ensures that a subsequence of the blow-up sequence converges to some function $v$ satisfying
		$$\left\lbrace\begin{array}{rcll}
		(\partial_t+L)v &=& P&\text{in }\{x_n>0\}\\
		v&=&0&\text{on }\{x_n=0\},\\
		||v||_{L^\infty(Q_R)}&\leq &CR^{\beta+1}&\text{for all } R\geq 1\\
		||v||_{L^\infty(Q_1)}&\geq& \frac{1}{2}&\\
		\int_{Q_1}v p(x_n)_+ &=& 0& \text{for every }p\in  \textbf{P}_{\lfloor\beta\rfloor,p},
		\end{array}\right.$$
		where $P\in \textbf{P}_{\lfloor\beta-1\rfloor,p}.$ Proposition \ref{Liouville} then says that $v$ equals $qx_n$, $q\in \textbf{P}_{\lfloor\beta\rfloor,p}$, which gives the contradiction with the last two properties of $v$, as wanted.
		
		Finally we prove also \eqref{interRegForExpansion}. If $c_\Omega$ is big enough, then for every point $(x_0,t_0)$ with $d(x_0,t_0) = c_\Omega r$ we have $Q_{2r}(x_0,t_0)\subset \Omega$. It suffices applying Proposition \ref{interiorEstimates} on  the rescaled function $v_r(x,t) = (u_1-p_{z,t_0}u_2)(x_0+2rx, t_0+4r^2t)$, take into account the just proven bound and noticing that $||p_{(z,t_0)}||$ can be uniformly bounded thanks to Lemma \ref{uniformBoundOnPolynomials}.
	\end{proof}

	We now have all ingredients to prove Theorem \ref{regularityOfQuotient}.

	\begin{proof}[Proof of Theorem \ref{regularityOfQuotient}]
		The proof goes along the same lines as Theorem \ref{regularityOfQuotient2}, just that the estimates with higher order parabolic seminorms are a bit more complicated.
		
		Note that 
		$$\left[ \frac{u_1}{u_2}\right] _{C^\beta_p(\Omega\cap Q_{1/2})} = \left[ D^{\lfloor\beta\rfloor}_p\frac{u_1}{u_2}\right] _{C^{\langle\beta\rangle}_p(\Omega\cap Q_{1/2})} + \left[ D^{\lfloor\beta\rfloor-1}_p\frac{u_1}{u_2}\right] _{C^{\frac{1+\beta}{2}}_t(\Omega\cap Q_{1/2})}.$$
		We start with estimating the first term.
		
		Choose $\gamma\in \N^{n+1}$, with $|\gamma|_p = \lfloor\beta\rfloor$ and a cylinder $Q_r(x_0,t_0)$, so that $Q_{2_r}(x_0,t_0)\subset \Omega\cap Q_{3/4},$ and $d(x_0,t_0)\leq Cr$, with $C$ independent of $x_0,t_0$ and $r$. Let $z$ be the closest point to $x_0$ in $\partial\Omega_{t_0}$.
		Take arbitrary $(x,t)$ and $(y,s)$ in $Q_r(x_0,t_0)$, denote $p = p_{(z,t_0)}\in\textbf{P}_{\lfloor\beta\rfloor,p}$ and compute
		$$\left|\partial^\gamma\frac{u_1}{u_2}(x,t) -\partial^\gamma\frac{u_1}{u_2}(y,s)\right| = 
		\left|\partial^\gamma\left(\frac{u_1}{u_2}(x,t) -p(x,t) \right) -\partial^\gamma\left(\frac{u_1}{u_2}(y,s)-p(y,s) \right)\right| \leq $$
		\begin{align}\label{termWithExpanisons}
			\begin{split}
			 \leq & \sum_{\alpha\leq\gamma} \left| \partial^\alpha\left( u_1 - p u_2 \right)(x,t) - \partial^\alpha\left( u_1 - p u_2 \right)(y,s) \right|\left| \partial^{\gamma-\alpha} u_2^{-1}(x,t) \right|+\\
			&+  \sum_{\alpha\leq\gamma} \left| \partial^\alpha\left( u_1 - p u_2 \right)(y,s) \right|\left| \partial^{\gamma-\alpha} u_2^{-1}(x,t)-\partial^{\gamma-\alpha} u_2^{-1}(y,s) \right|.
			\end{split}
		\end{align}
		By Proposition \ref{harnackExpansion} and Lemma \ref{fullRegularityFromCylinders} 
		we have that $u_1 - p u_2$ is $C^{\beta+1}_p(\overline{\mathcal{C}}),$ for a suitable cone $\mathcal{C} = \cup_{d_{t_0}(x_0) = c_\Omega r=|x_0-z|} Q_r(x_0,t_0)$, with $D^k_p(u_1 - p u_2)(z,t_0)=0$, for all $k< \beta+1$. Hence by Lemma \ref{polynomialAproximation} we can estimate 
		$$|D^k_p (u_1 - p u_2) (x,t)|\leq C (|x-z|^{\beta+1-k}+|t-t_0|^{\frac{\beta+1-k}{2}}), \quad (x,t)\in\mathcal{C}, k< \beta+1.$$
		
		Next take $\alpha \in \N^{n+1}$  and compute 
		$$\partial^\alpha u_2^{-1} = \sum_{l\leq |\alpha|} \frac{1}{u_2^{l+1}}\sum_{\alpha_1+\ldots+\alpha_l=\alpha}c_{\alpha_1,\ldots,\alpha_l}\partial^{\alpha_1}u_2\cdots \partial^{\alpha_l}u_2.$$
		Since by Corollary \ref{boundaryRegularity} $u_2\in C^\beta_p(\Omega\cap Q_1)$
		this implies 
		$$||\partial^\alpha u_2^{-1}||_{L^\infty(Q_r(x_0,t_0))}\leq C r^{-(|\alpha|+1)},$$
		when $|\alpha|_p<\beta,$ as well as 
		\begin{equation}\label{temporary1}
			\left[ \partial^\alpha u_2^{-1}\right]_{C^{\langle\beta\rangle}_p(Q_r(x_0,t_0))}\leq C r^{-(|\alpha|+1)},
		\end{equation}
		when $|\alpha|_p = \lfloor\beta\rfloor,$
		and 
		\begin{equation}\label{temporary2}
			\left[ \partial^\alpha u_2^{-1}\right]_{C^{\frac{\langle\beta\rangle+1}{2}}_t(Q_r(x_0,t_0))}\leq C r^{-(|\alpha|+1)},
		\end{equation}
		when $|\alpha|_p = \lfloor\beta\rfloor-1.$
		
		We are now equipped to estimate \eqref{termWithExpanisons}. Take $\alpha\leq \gamma$, with $|\alpha|_p<\beta$  and estimate
		\begin{align*}
			|\partial^\alpha v(x,t)-\partial^\alpha v(y,s)|\leq& |\partial^\alpha v(x,t)-\partial^\alpha v(y,t)|+|\partial^\alpha v(y,t)-\partial^\alpha v(y,s)|\leq \\
			\leq& ||D^{|\alpha|_p+1}_p v|| |x-y| + ||D^{|\alpha|_p+2}_p v|| |t-s|\\
			\leq & C r^{\beta+1-|\alpha|_p-1}r^{1-\langle\beta\rangle}|x-y|^{\langle\beta\rangle} + r^{\beta+1-|\alpha|_p-2}r^{2(1-\frac{\langle\beta\rangle}{2})}|t-s|^{\frac{\langle\beta\rangle}{2}}\\
			\leq & C r^{\lfloor\beta\rfloor +1-|\alpha|_p}(|x-y|^{\langle\alpha\rangle} + |t-s|^{\frac{\langle\beta\rangle}{2}}).
		\end{align*}
		If $|\alpha|_p = \lfloor\beta\rfloor$, then we need to use the definition of $\left[\partial^\alpha v \right] _{C^{\frac{1+\langle\beta\rangle}{2}}_t}$, to get the same estimate.
		Similarly, if $|\gamma-\alpha|_p<\beta-2$,
		\begin{align*}
		|\partial^{\gamma-\alpha}u_2^{-1}(x,t)-\partial^{\gamma-\alpha}u_2^{-1}(y,s)|\leq & ||D^{|\gamma-\alpha|+1} u_2^{-1}|| (|x-y|+|t-s|)\\
		\leq &  C r^{-|\gamma-\alpha|-2}(r^{1-\langle\beta\rangle}|x-y|^{\langle\beta\rangle}+ r^{2(1-\frac{\langle\beta\rangle}{2})}|t-s|^{\frac{\langle\beta\rangle}{s}})\\
		\leq &  C r^{-|\gamma-\alpha|-1-\langle\beta\rangle}(|x-y|^{\langle\beta\rangle}+|t-s|^{\frac{\langle\beta\rangle}{2}}).
		\end{align*}
		When $|\gamma-\alpha|_p = \lfloor\beta\rfloor-1$ we have to split the difference into the spatial part and time part and on the time part use \eqref{temporary2}. When $|\gamma-\alpha|_p = \lfloor\beta\rfloor$, we directly use \eqref{temporary1}
		to get the same estimate in other cases. Plugging it all in expression \eqref{termWithExpanisons}, we get
		$$\eqref{termWithExpanisons}\leq \sum_{\alpha\leq\gamma} Cr^{\lfloor\beta\rfloor +1-|\alpha|_p}Cr^{-|\gamma-\alpha|-1} (|x-y|^{\langle\beta\rangle}+|t-s|^{\frac{\langle\beta\rangle}{2}})+$$
		$$ + 
		\sum_{\alpha\leq\gamma} Cr^{\beta+1-|\alpha|_p}r^{-|\gamma-\alpha|-1-\langle\beta\rangle}(|x-y|^{\langle\beta\rangle}+|t-s|^{\frac{\langle\beta\rangle}{2}}).$$
		It remains to notice that $|\gamma-\alpha|\leq|\gamma-\alpha|_p$, and hence $\eqref{termWithExpanisons}\leq C (|x-y|^{\langle\beta\rangle}+|t-s|^{\frac{\langle\beta\rangle}{2}})$.
		
		We argue similarly that $\left[ D^{\lfloor\beta\rfloor-1}_p\frac{u_1}{u_2}\right] _{C^{\frac{1+\beta}{2}}_t(\Omega\cap Q_{1/2})}\leq C$. Choose $|\gamma|_p = \lfloor\beta\rfloor-1$ remember the degree of $p$ and compute 
		$$\left|\partial^\gamma\frac{u_1}{u_2}(x,t) -\partial^\gamma\frac{u_1}{u_2}(x,s)\right| = 
		\left|\partial^\gamma\left(\frac{u_1}{u_2}(x,t) -p(x,t) \right) -\partial^\gamma\left(\frac{u_1}{u_2}(x,s)-p(x,s) \right)\right| \leq $$
		\begin{align*}
		\begin{split}
		\leq & \sum_{\alpha\leq\gamma} \left| \partial^\alpha\left( u_1 - p u_2 \right)(x,t) - \partial^\alpha\left( u_1 - p u_2 \right)(x,s) \right|\left| \partial^{\gamma-\alpha} u_2^{-1}(x,t) \right|+\\
		&+  \sum_{\alpha\leq\gamma} \left| \partial^\alpha\left( u_1 - p u_2 \right)(x,s) \right|\left| \partial^{\gamma-\alpha} u_2^{-1}(x,t)-\partial^{\gamma-\alpha} u_2^{-1}(x,s) \right|\\
		\leq & \sum_{\alpha\leq\gamma} ||D^{|\alpha|_p+2}_p v|| \cdot|t-s|\cdot||D^{|\gamma-\alpha|}u_2^{-1}||+\\
		&+  \sum_{1\leq|\alpha|\leq|\gamma|} ||D^{|\alpha|_p}_p v||\cdot || D^{|\gamma-\alpha|+1} u_2^{-1}||\cdot |t-s|\\
		& + ||v||\cdot \left[\partial^\gamma u_2^{-1}\right]_{C^{\frac{1+\langle\beta\rangle}{2}}_t} \cdot|t-s|^{\frac{1+\langle\beta\rangle}{2}}\\
		\leq & \sum_{\alpha\leq\gamma} C r^{\beta+1-|\alpha|_p-2}r^{2(\frac{1-\langle\beta\rangle}{2})}|t-s|^{\frac{1+\langle\beta\rangle}{2}} Cr^{-|\gamma-\alpha|-1}\\
		&+ \sum_{1\leq|\alpha|\leq|\gamma|}Cr^{\beta+1-|\alpha|_p}Cr^{-|\gamma-\alpha|-2}r^{2(\frac{1-\langle\beta\rangle}{2})}|t-s|^{\frac{1+\langle\beta\rangle}{2}} \\
		&+ Cr^{\beta+1}Cr^{-|\gamma|-1}r^{2(\frac{1-\langle\beta\rangle}{2})}|t-s|^{\frac{1+\langle\beta\rangle}{2}}\\
		\leq & C|t-s|^{\frac{1+\langle\beta\rangle}{2}}.
		\end{split}
		\end{align*}
		Therefore the claim is proven, thanks to Lemma \ref{fullRegularityFromCylinders}. 
		
		The same steps prove the claim in case $\beta=1$, but we use Proposition \ref{HarnackExpansion0} instead of Proposition \ref{harnackExpansion}, and the $C^\beta_p$  estimate for $u_2$ up to the boundary is replaced by $C^{1-\varepsilon}_p$ from Corollary \ref{lipschitzBounds}.
	\end{proof}
	
	\section{Higher regularity of free boundary in obstacle problem}\label{higherRegularityOfFree...}
	
	Our parabolic higher order boundary Harnack inequality gives a simple way to prove that once the free boundary in the parabolic obstacle problems is $C^{1}_p$, it is in fact $C^\infty$. The reason is that the normal to the free boundary can be expressed with the quotients of partial derivatives of the solution to obstacle problem, which are as smooth as the boundary. This gives the bootstrap argument, and does not require any use of a hodograph transform as in \cite{KN77}.
	
	Let us start with preliminary results that we need in the proof of Corollary \ref{infiniteRegularityOfFreeBoundary}. 
	
	\begin{lemma}\label{preliminaryResults}
		Let $v\colon Q_1\to\R$ solve \eqref{theParabolicObstacleProblem} with $f\in C^\theta(B_1)$, for some $\theta>0$, with $f(0) = 1$. Assume that $(0,0)\in\partial\{v>0\}$ is a regular free boundary point.
		
		Then $v\in C^{1,1}_p(Q_1)$, up to rotation of coordinates the free boundary is $C^1_p$ in $Q_r$ for some $r>0$ in the sense of Definition \ref{assumptionsOnDomain}, $v_t\in C(Q_r)$ and $\partial_\nu v \geq c_1 d$, with $c_1>0$, where $\nu$ is the unit spatial normal vector of the free boundary at $(0,0)$.
	\end{lemma}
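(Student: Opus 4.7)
The plan is to assemble the four conclusions from the standard parabolic obstacle problem literature, with the regular point hypothesis providing the crucial one-dimensional blow-up. Recall that a regular free boundary point at $(0,0)$ is one for which the parabolic rescalings $v_r(x,t) := r^{-2}v(rx,r^2t)$ converge along a subsequence to a one-dimensional profile $\tfrac12(x\cdot e)_+^2$ for some unit vector $e$; after rotating coordinates I take $e = e_n$, so $\nu = e_n$.

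For the $C^{1,1}_p$ regularity of $v$ on $Q_1$, I would invoke the optimal interior regularity for the parabolic obstacle problem with H\"older right-hand side. This is classical and can be proved by a parabolic ACF-type monotonicity formula together with suitable barrier constructions. Once this is in hand, combining it with the regular-point structure, the $C^1_p$ regularity of $\partial\{v>0\}$ in a small cylinder $Q_r$ (in the sense of Definition \ref{assumptionsOnDomain}) is exactly the conclusion of \cite[Theorem 1.7]{LM15}, as highlighted in the remark following Corollary \ref{infiniteRegularityOfFreeBoundary}. This step provides the function $F$ and the normalisation $F(0,0)=0$, $\nabla_{x'}F(0,0)=0$.

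For the nondegenerate bound $\partial_\nu v \geq c_1 d$, the heuristic is that under the regular-point rescaling $v_r \to \tfrac12(x_n)_+^2$, so $\partial_n v_r \to (x_n)_+$, which is comparable to the parabolic distance in the half-space. To make this quantitative I would run a compactness and contradiction argument: if the inequality failed, there would be points $(x_k,t_k)\in\{v>0\}\cap Q_r$ at parabolic distance $\rho_k\downarrow 0$ from $\partial\{v>0\}$ with $\partial_n v(x_k,t_k)/\rho_k \to 0$. Rescaling by $\rho_k$ about the closest free boundary point and using the $C^1_p$ flatness of $\partial\{v>0\}$ and the $C^{1,1}_p$ bound on $v$, one extracts a blow-up limit that must coincide with $\tfrac12(x_n)_+^2$ (since the origin is a regular point), yet whose $\partial_n$-derivative vanishes at an interior point of $\{x_n>0\}$, a contradiction. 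This step is the main technical point, since the $C^1_p$ (rather than $C^{1,\alpha}_p$) regularity of the free boundary forces us to rely purely on compactness rather than on quantitative estimates.

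Finally, $v_t\in C(Q_r)$ follows from the equation: in the open set $\{v>0\}$ we have $v_t = \Delta v - f$, and by the $C^{1,1}_p$ regularity of $v$ combined with the standard fact that $\Delta v \to f$ at every regular free boundary point (a consequence of optimal regularity and the identification of the blow-up as $\tfrac12(x\cdot e)_+^2$), one gets $v_t \to 0$ as one approaches $\partial\{v>0\}$ from inside. This matches $v_t=0$ on the open interior of $\{v=0\}$, giving global continuity of $v_t$ on $Q_r$.
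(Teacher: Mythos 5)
Your overall plan is sound and all four conclusions are addressed, but in two places you take a genuinely different route from the paper.

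For the nondegeneracy $\partial_\nu v \geq c_1 d$, you run a compactness/contradiction argument: extract a sequence of bad points, rescale, and get a blow-up limit with a vanishing normal derivative. The paper instead invokes \cite[Theorem 1.7]{LM11} together with Arzela--Ascoli to get \emph{uniform} (in the base free boundary point) $C^{0,1}_p$ convergence of the rescalings to the blow-up profile $\tfrac{f(x_0)}{2}(x\cdot\nu_{x_0})_+^2$, and then reads off the lower bound on $\partial_n v$ directly from $\partial_n v_r \approx f(x_0)(\nu_{x_0}\cdot e_n)(x\cdot\nu_{x_0})_+$ near the free boundary. The paper's argument is thus constructive and yields an explicit $c_1$ from the $C^{0,1}_p$ convergence rate; your contradiction argument gets the same qualitative conclusion but hides the quantitative information. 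One point to be careful about in your version: the blow-up limit at the nearby points $(x_k,t_k)$ is $\tfrac{f(x_*)}{2}(x\cdot e_*)_+^2$ for some accumulation point $(x_*,t_*)$ with normal $e_*$, not literally $\tfrac12(x_n)_+^2$; you should add that by $C^1_p$ regularity of the free boundary and continuity of $f$ one has $e_*$ close to $e_n$ and $f(x_*)$ close to $1$, which is still enough for the contradiction.

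For the continuity of $v_t$, you argue via the equation $v_t = \Delta v - f$ and the statement that $\Delta v \to f$ at regular points. The paper instead argues directly from the growth estimate: picking $(x,t)\in\Omega$ with $(x,\tau)\in\Omega^c$ for $\tau\le s$, the $C^{1,1}_p$ bound gives $v(x,t)\le C|x_n-F(x',t)|^2 = C|F(x',t)-F(x',s)|^2 \le C\bigl(|t-s|^{1/2}\omega(|t-s|^{1/2})\bigr)^2$, which yields $v(x,t)\le |t-s|\,\eta(|t-s|)$ for a modulus $\eta$, hence $\partial_t v\to 0$ at the free boundary. Your equation-based argument is also valid but less self-contained: $\Delta v\to f$ at the free boundary is not immediate from $C^{1,1}_p$ regularity alone, since the blow-up convergence is only in $C^{0,1}_p$; one needs to upgrade to second-order convergence via interior Schauder estimates on dyadic cylinders inside $\{v>0\}$, using that the blow-up error is $o(r^2)$. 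That upgrade is standard but should be spelled out; the paper's direct growth estimate avoids it, relying only on the $C^{1,1}_p$ bound of \cite[Theorem 1.6]{LM11} and the modulus of continuity of $\nabla F$.

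The remaining two conclusions ($C^{1,1}_p$ regularity via \cite[Theorem 1.6]{LM11} and $C^1_p$ regularity of the free boundary via \cite[Theorem 1.7]{LM15}) are cited by both you and the paper in essentially the same way.
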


	\begin{proof}
		The  $C^{1,1}_p(Q_1)$ regularity of solutions of the obstacle problem follows from \cite[Theorem 1.6]{LM11}. The $C^1_p$ regularity of the free boundary is provided in \cite[Theorem 1.7]{LM15}. Denote the boundary defining map with $F$. 
		
		Let us now prove the continuity of the time derivative.	This is well known when $f\equiv 1$ (see \cite{Ca77}). For completeness we prove it next in our setting.
		Since inside $\Omega$ the solution $v$ is $C^{\theta+2}_p$ and $v\equiv 0$ in $\Omega^c$, we have to show that 
		$$\lim_{(x,t)\to (z,s)} \partial_t v(x,t) = 0,$$ 
		for every free boundary point in $Q_r$. Choose a point $(x,t)\in\Omega\cap Q_r$, such that $(x,\tau)\in\Omega^c$ for $\tau\leq s$. Let $(x,s),(x',F(x',t),t)$ be free boundary points. Therefore by \cite[Theorem 1.6]{LM11} we have $v(x,t) \leq C |x_n-F(x',t)|^2.$ But since $F\in C^1_p$, and $x_n = F(x',s)$, we have $v(x,t)\leq C |F(x't)-F(x',s)|^2\leq  C(|t-s|^{\frac{1}{2}}\omega(|t-s|^\frac{1}{2}))^2$, where $\omega$ is the modulus of continuity of $\nabla F$. This gives 
		$|u(x,t)-u(x,s)| = u(x,t)\leq |t-s| \eta(|t-s|)$, for some modulus of continuity $\eta$.
		
		Finally, let us show that $\partial_\nu v \geq c_1 d$ where $c_1>0$, $d$ is the regularised distance to the $\partial\Omega$ and $\nu$ is its spatial normal vector at $(0,0)$. Without loss of generality assume that $\nu = e_n$. Having the bound $||u||_{C^{1,1}_p(Q_1)}\leq C$, together with \cite[Theorem 1.7]{LM11} and Arzela-Ascoli theorem imply that $v$ converges to its blow up at any free boundary point $(x_0,t_0)\in \partial\Omega\cap Q_r$ uniformly (independently of the free boundary point) in $C^{0,1}_p$, namely
		$$\left|\left|\frac{1}{r^2}v(x_0+rx,t_0+r^2t) - \frac{f(x_0)}{2}(x\nu_{x_0})_+^2\right|\right|_{C^{0,1}_p(Q_1)}\longrightarrow 0$$
		as $r\to 0,$ uniformly in $(x_0,t_0)$.
		Hence reading it for the $n-$th derivative, this means that there exists a number $r_0>0$ so that for all $r<r_0$ we have
		$$\left|\left|\frac{1}{r}v_n(x_0+rx,t_0+r^2t) - f(x_0)(\nu_{x_0} e_n)(x\nu_{x_0})_+ \right|\right|_{L^\infty (Q_1)}\leq \frac{1}{4}.$$
		Restricting ourselves to a potentially smaller neighbourhood of $(0,0)$, so that $\nu_{x_0}e_n>\frac{3}{4}$, $|\nu_{x_0}-e_n|\leq \frac{1}{5},$ $|f(x_0)-1|>\frac{1}{4}$ with a triangle inequality we deduce that for $(x,t_0)\in Q_r((x_0,t_0))\cap\{(x-x_0)e_n>\frac{r}{2}\}$ we have
		$$\partial_n v(x,t_0) \geq f(x_0)(\nu_{x_0}e_n)((x-x_0)\nu_{x_0})-\frac{r}{4}\geq \frac{r}{4}.$$ 
		But since $d_x(x,t_0)\leq |x-x_0| = r,$ and the estimate is independent of $(x_0,t_0)$, we get that $\partial_v\geq \frac{1}{4}d_x\geq \frac{1}{4C}d$, since the two distances are comparable.
	\end{proof}
	
	We are now equipped enough to prove Corollary \ref{infiniteRegularityOfFreeBoundary}.

	\begin{proof}[Proof of Corollary \ref{infiniteRegularityOfFreeBoundary}] 
		Denote $\Omega = \{v>0\}.$ By Lemma \ref{preliminaryResults}, rescaling and rotation if necessary $\Omega$ is $C^1_p$ in $Q_1$, the solution $v$ is $C^{1,1}$ in space and $C^1$ in time. This implies that in $\Omega^c$ both $v_t:=\partial_t v=0$ and for $i=1,\ldots,n$ also $v_i:=\partial_i v =0$.  Without loss of generality assume that the normal vector to $\Omega_0$ at $0$ is $e_n$. 
		Hence all the partial derivatives of $v$ solve 
		$$\left\lbrace\begin{array}{rcll}
		(\partial_t-\Delta)w &=& \partial_e f&\text{in }\Omega\cap Q_1\\
		w&=&0&\text{on }\partial\Omega\cap Q_1. 
		\end{array}\right.$$
		Remember that $f$ is independent of time and by assumption $f\in C^\theta(B_1)$ and hence $\partial_ef\in C^{\theta-1}_p(Q_1)$.
		Moreover Lemma \ref{preliminaryResults} says that $v_n\geq c_1 d$ with $c_1>0$ in a neighbourhood of $0$.
		Hence we can apply Theorem \ref{regularityOfQuotient}, which gives that all quotients $v_i/v_n$ and $v_t/v_n$ are $C^{1-\varepsilon}_p(\overline{\Omega}\cap Q_{r_2}),$ with bounds on the norms. 
		
		Now notice that every component the normal vector $\nu(x,t)$ to the level set $\{v=t\}$, $t>0$ can be expressed as
		$$\nu^i(x,t) = \frac{\partial_i v}{|\nabla_{(x,t)} v|}(x,t) = \frac{\partial_i v/\partial_nv}{\left(\sum_{j=1}^{n-1}(\partial_jv/\partial_nv)^2 + 1 + (\partial_tv/\partial_nv)^2\right)^{1/2}}.$$ 
		Letting $t\downarrow0$, we get that the normal vector is $C^{1-\varepsilon}_p(\partial \Omega\cap Q_{r/2}).$ Hence the free boundary is $C^{2-\varepsilon}_p$.
		
		The same reasoning gives that if the boundary is $C^\beta_p$ it is $C^{\beta+1}_p$, as long as $\beta \leq \theta$ and hence the claim is proven.
	\end{proof}

	\appendix
	
\section{Interior regularity results}\label{interiorRegularityResults}
	In this section we state the interior regularity results used in the body of the paper. Even though the results are not new, we provide the proofs since the claims are adapted to our specific setting.
	For overview of the theory of parabolic second order equations, we refer to \cite{Kr96,Li96}.
	
	Our techniques rely on contradiction and blow up arguments. The contradiction at the end is usually provided by Liouville type results. We start with establishing such kind of result, saying that if a function solves an equation in the full space $\R^{n+1}$ with a polynomial in the right-hand side, then the function is a polynomial as well. The proof is based on using interior estimates iteratively on arbitrarily big cylinders.
	
	\begin{proposition}\label{LiouvilleFullSpace}
		Assume that $v$ solves 
		$$\left\lbrace\begin{array}{rcl l}
		\partial_t v - \operatorname{tr}(AD^2v) & =& P &\text{in } \R^{n+1}\\
		||v||_{L^\infty(Q_R)}& \leq& C_0R^\gamma &\forall R>1,\\ 
		\end{array} \right.$$
		for some constant, uniformly elliptic matrix $A$, $\gamma>0,$ $\gamma\not\in\N$, and some polynomial $P$ of parabolic order less than $\gamma-2$ (if $\gamma<2$, then $P=0$). Then $v$ is a polynomial of parabolic order less than $\gamma$.
	\end{proposition}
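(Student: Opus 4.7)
The plan is to parallel the proof of the half-space Liouville result (Proposition~\ref{Liouville}), with interior regularity playing the role previously played by boundary regularity. The starting point is a scaling observation: for $R>1$, the rescaled function $v_R(x,t):=R^{-\gamma}v(Rx,R^2t)$ satisfies $\|v_R\|_{L^\infty(Q_1)}\leq C_0$ and solves $(\partial_t-\operatorname{tr}(AD^2))v_R = R^{2-\gamma}P(Rx,R^2t)$. Because every monomial $(x,t)^\alpha$ of $P$ has $|\alpha|_p<\gamma-2$, the coefficients of the rescaled right-hand side are uniformly bounded by those of $P$ for all $R\geq 1$. Interior parabolic regularity estimates (such as \cite[Theorem 2]{PS19}) applied on $Q_1$ and rescaled back yield
$$[v]_{C^1_p(Q_{R/2})}\leq CR^{\gamma-1},$$
with $C$ independent of $R$.

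I would then iterate via parabolic incremental quotients, exactly as in the proof of Proposition~\ref{Liouville}. For arbitrary $h\in\R^n$ and $\tau\in\R$, define
$$w_1(x,t):=\frac{v(x+h,t+\tau)-v(x,t)}{|h|+|\tau|^{1/2}}.$$
The $C^1_p$ bound gives $\|w_1\|_{L^\infty(Q_R)}\leq CR^{\gamma-1}$ for all sufficiently large $R$, while $w_1$ satisfies the same constant-coefficient PDE with right-hand side equal to the corresponding difference quotient of $P$, which is again a polynomial of strictly lower parabolic degree (a spatial difference quotient drops the parabolic degree by one and a temporal one by two). Repeating the scaling-plus-interior-estimate step on $w_1$, then on $w_2$, and so on, after $K:=\lceil\gamma\rceil$ iterations I obtain a function $w_K$ solving a homogeneous equation (the right-hand side, whose initial parabolic degree was at most $\lfloor\gamma-2\rfloor=K-3$, is killed off within the first $K-2$ steps) together with the growth control
$$\|w_K\|_{L^\infty(Q_R)}\leq CR^{\gamma-K}.$$
Since $\gamma\notin\N$ forces $\gamma-K<0$, letting $R\to\infty$ shows $w_K\equiv 0$. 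As the $K$ increment vectors were arbitrary, every $K$-th order parabolic incremental quotient of $v$ vanishes identically, which forces $v$ to be a parabolic polynomial of parabolic degree at most $K-1=\lfloor\gamma\rfloor<\gamma$, as desired.

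The main obstacle is the bookkeeping for the polynomial right-hand sides through the iteration: one must verify that each difference-quotient operation produces a polynomial of strictly lower parabolic degree whose coefficients remain controlled uniformly in $R$ after rescaling, so that the scale-invariant interior estimate keeps producing an $R$-independent constant at every stage. The minor technicality that each differencing shrinks $Q_R$ to $Q_{R-O(|h|+|\tau|^{1/2})}$ is harmless in the limit $R\to\infty$, and the remainder of the argument is a direct transcription of the half-space case.
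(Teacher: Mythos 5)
Your argument follows the paper's own proof essentially verbatim: rescale $v_R(x,t)=R^{-\gamma}v(Rx,R^2t)$ so that $\|v_R\|_{L^\infty(Q_1)}\leq C_0$ and the rescaled right-hand side $P_R$ has coefficients bounded uniformly in $R\geq 1$ (this uses precisely that the parabolic degree of $P$ is strictly below $\gamma-2$), apply the interior estimate of \cite[Theorem 2]{PS19} to obtain $[v]_{C^{0,1}_p(Q_{R/2})}\leq CR^{\gamma-1}$, pass to parabolic incremental quotients $w_1$, observe that the new right-hand side is again a polynomial of strictly lower parabolic degree, and iterate $K=\lceil\gamma\rceil$ times until $w_K\equiv 0$ by the growth $\|w_K\|_{L^\infty(Q_R)}\leq CR^{\gamma-K}$ with $\gamma-K<0$. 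This is exactly the scheme in the paper, so the approach is correct.

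The one imprecision is in the final sentence. The vanishing of all $K$-th order parabolic incremental quotients only bounds the \emph{ordinary} joint degree of $v$ in $(x,t)$ by $K-1$, not its parabolic degree. For instance, $v=t$ has every second-order parabolic incremental quotient identically zero (the first quotient $\tau/(|h|+|\tau|^{1/2})$ is constant in $(x,t)$, so the second is zero), yet $t$ has parabolic degree $2=K$ when $\gamma\in(1,2)$. To rule out such monomials and obtain parabolic degree strictly less than $\gamma$, one must invoke the growth hypothesis $\|v\|_{L^\infty(Q_R)}\leq C_0R^\gamma$ one more time at the end, exactly as the paper does in its closing sentence. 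Since you already have that bound in hand, this is easily repaired, but as stated the implication ``vanishing $K$-th quotients $\Rightarrow$ parabolic degree $\leq K-1$'' is not correct.
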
  
	\begin{proof}
		Take $R>1$ and define 
		$$v_R(x,t) := R^{-\gamma} v(Rx,R^2t),$$
		so that $(\partial_t v_R - \operatorname{tr}(AD^2v_R))(x,t) =R^{-\gamma+2} P(Rx,R^2t) = P_R(x,t).$ Note that $P_R$ is again a polynomial of the same order and the coefficients reduce as $R$ increases. Moreover, by assumption $||v_R||_{L^\infty(Q_1)}\leq C_0$. 
		Interior regularity result \cite[Theorem 2]{PS19} gives
		$$\left[ v_R\right] _{C_p^{0,1}(Q_{1/2})}\leq C(||P_R|| + ||v_R||_{L^\infty(Q_1)})\leq C(||P||+C_0),$$
		and hence 
		$$\left[ v\right] _{C_p^{0,1}(Q_{R/2})}\leq  CR^{\gamma-1}(||P||+C_0).$$
		Choose arbitrary $h\in\R^n,$ $\tau\in\R$, and define
		$$w_1(x,t) = \frac{v(x+h,t+\tau)-v(x,t)}{|h|+|\tau|}.$$
		Then $w_1$ solves
		$$\partial_t w_1 - \operatorname{tr}(AD^2w_1)  = P_1 ,$$
		for some polynomial $P_1\in\textbf{P}_{\lfloor\gamma-1\rfloor,p}$
		and it satisfies the growth control
		$$||w_1||_{L^\infty(Q_R)}\leq C\left[ v\right] _{C^{0,1}(Q_{R})}\leq  CR^{\gamma-1}.$$
		
		Repeating the same procedure we conclude that 
		$$\left[ w_{\lceil\gamma\rceil}\right] _{C^{0,1}(Q_{R/2})}\leq  CR^{\gamma-\lceil\gamma\rceil},$$
		which gives that $w_{\lceil\gamma\rceil}$ is constant. This implies that $v$ is a polynomial, and the assumed growth assures that it is of parabolic order less than $\gamma$.
	\end{proof}
	
	Since we define the parabolic H\"older seminorms a bit differently than for example in \cite{Kr96}, we prove the a priori Schauder estimates in our setting as well.
	
	\begin{proposition}\label{interiorSchauder}
		Assume $u\in C^{2+\alpha}_p(Q_1)$ solves the equation
		$$\partial_tu + Lu = f,\quad\quad \text{in }Q_1,$$
		with $f\in C^\alpha_p(Q_1),$ $A\in C^\alpha_p(Q_1)$ and $b\in C^\alpha_p(Q_1)$. Then we have
		$$\left|\left| u\right|\right| _{C^{2+\alpha}_p(Q_{1/2})}\leq C(\left[ f\right] _{C^\alpha_p(Q_1)}+||u||_{L^\infty(Q_1)}),$$
		where $C$ depends only on $n,\alpha$, ellipticity constants and H\"older norms of the coefficients. 
	\end{proposition}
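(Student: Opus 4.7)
The plan is to argue by contradiction and blow-up, reducing to the Liouville-type Proposition~\ref{LiouvilleFullSpace}, exactly along the template of Propositions~\ref{shauderExpansion} and~\ref{harnackExpansion} but now in the interior. Suppose the estimate fails. Then there exist sequences $u_k$, $f_k$ and operators $L_k = -\operatorname{tr}(A_k D^2) + b_k\cdot\nabla$ satisfying the hypotheses with normalisation $||u_k||_{L^\infty(Q_1)} + [f_k]_{C^\alpha_p(Q_1)} \leq 1$, uniform ellipticity, and uniformly bounded $||A_k||_{C^\alpha_p}$, $||b_k||_{C^\alpha_p}$, yet with $||u_k||_{C^{2+\alpha}_p(Q_{1/2})} \to \infty$. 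For each $k$, each $(x_0,t_0)\in Q_{1/2}$ and each $\rho>0$, let $P_{k,(x_0,t_0),\rho}\in\textbf{P}_{2,p}$ denote the $L^2(Q_\rho(x_0,t_0))$-projection of $u_k$ onto $\textbf{P}_{2,p}$, and set
$$\theta(r) = \sup_k\sup_{(x_0,t_0)\in Q_{1/2}}\sup_{\rho\geq r}\rho^{-2-\alpha}||u_k - P_{k,(x_0,t_0),\rho}||_{L^\infty(Q_\rho(x_0,t_0)\cap Q_1)}.$$
The first step is to verify, via the Campanato-type equivalence between this modulus and the $C^{2+\alpha}_p$ seminorm (the same used in Proposition~\ref{shauderExpansion}), that $\theta(r)\to\infty$ as $r\downarrow 0$. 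Then pick $r_m\to 0$, $k_m$ and $(x_m,t_m)\in Q_{1/2}$ that nearly realise the supremum, and define
$$v_m(x,t) = \frac{(u_{k_m}-P_{k_m,(x_m,t_m),r_m})(x_m + r_m x, t_m + r_m^2 t)}{r_m^{2+\alpha}\theta(r_m)},$$
so that by construction $||v_m||_{L^\infty(Q_1)}\geq 1/2$ and $v_m$ is $L^2(Q_1)$-orthogonal to $\textbf{P}_{2,p}$.

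The second step is to iterate the cross-scale comparison $||P_{k_m,(x_m,t_m),\rho} - P_{k_m,(x_m,t_m),2\rho}||_{L^\infty(Q_\rho)}\leq C\theta(\rho)\rho^{2+\alpha}$ (the direct analogue of the estimate carried out in Proposition~\ref{shauderExpansion}), in order to obtain (i) the growth control $||v_m||_{L^\infty(Q_R)}\leq C R^{2+\alpha}$ for every $R\geq 1$ and every $m$ with $R r_m \leq 1$, and (ii) that every coefficient of $P_{k_m,(x_m,t_m),r_m}$ divided by $\theta(r_m)$ tends to zero as $m\to\infty$. The rescaled equation reads
$$(\partial_t + \tilde L_{k_m})v_m = g_m,\qquad g_m(x,t) = \frac{r_m^{-\alpha}}{\theta(r_m)}\bigl[f_{k_m} - (\partial_t + L_{k_m})P_{k_m,(x_m,t_m),r_m}\bigr](x_m + r_m x, t_m + r_m^2 t),$$
where $\tilde L_{k_m}$ has coefficients $A_{k_m}(x_m + r_m\cdot, t_m + r_m^2\cdot)$ and $r_m b_{k_m}(x_m + r_m\cdot, t_m + r_m^2\cdot)$. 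The $C^\alpha_p$ control on $f_{k_m}$ (variation at most $C(Rr_m)^\alpha$ over $Q_{Rr_m}(x_m,t_m)$, with the constant value $f_{k_m}(x_m,t_m)$ absorbed into the term involving $(\partial_t+L_{k_m})P_{k_m,(x_m,t_m),r_m}$) together with (ii) will force $g_m\to 0$ locally uniformly on $\R^{n+1}$, while $\tilde L_{k_m}$ converges locally uniformly to a constant uniformly elliptic operator $-\operatorname{tr}(A_\infty D^2)$ (the drift disappears due to the $r_m$ prefactor).

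The interior H\"older estimates of \cite[Theorem~5.9]{Li96}, applied on each fixed cylinder $Q_R$, then give equicontinuity, so along a subsequence $v_m\to v$ locally uniformly, and the viscosity stability \cite[Theorem~1.1]{Ba05} yields that $v$ solves $\partial_t v - \operatorname{tr}(A_\infty D^2 v) = 0$ on $\R^{n+1}$, with $||v||_{L^\infty(Q_R)}\leq C R^{2+\alpha}$ for all $R\geq 1$, $||v||_{L^\infty(Q_1)}\geq 1/2$, and $\int_{Q_1}v\cdot Q = 0$ for every $Q\in\textbf{P}_{2,p}$. Proposition~\ref{LiouvilleFullSpace} applied with $\gamma = 2+\alpha<3$ forces $v\in\textbf{P}_{2,p}$, and testing the orthogonality against $Q = v$ then gives $v\equiv 0$ on $Q_1$, contradicting $||v||_{L^\infty(Q_1)}\geq 1/2$. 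This proves the seminorm bound $[u]_{C^{2+\alpha}_p(Q_{1/2})}\leq C([f]_{C^\alpha_p(Q_1)} + ||u||_{L^\infty(Q_1)})$, from which the full-norm bound follows by standard interpolation with $||u||_{L^\infty(Q_1)}$.

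The main obstacle is the dyadic bookkeeping on the coefficients of $P_{k,(x_0,t_0),\rho}$: each coefficient of parabolic degree $|\alpha|_p\in\{0,1,2\}$ must grow at most like $\theta(\rho)\rho^{2+\alpha-|\alpha|_p}$ between successive scales, so that both the growth control (i) on $v_m$ and the smallness in (ii) needed to kill the polynomial contribution to $g_m$ survive the passage to the limit. This is the same mechanism already implemented in Propositions~\ref{shauderExpansion} and~\ref{harnackExpansion}, and is in fact lighter here because there is no distance function $d$ and no boundary to contend with.
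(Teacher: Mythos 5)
Your plan follows the Campanato/$L^2$-projection template of Propositions~\ref{shauderExpansion} and~\ref{harnackExpansion}, whereas the paper's own proof of Proposition~\ref{interiorSchauder} is a different, more ``textbook Schauder'' argument: it first reduces to an $\varepsilon$-closeness seminorm estimate via \cite[Lemma~2.23]{FR20} and interpolation, then normalises the blow-up by $[u_k]_{C^{2+\alpha}_p(Q_1)}$ and subtracts the second-order \emph{Taylor} polynomial $p_k$ of $u_k$ at a near-extremal pair of points. That choice is not a cosmetic difference, and it is exactly where your argument has a gap.

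The gap is in the claim that $g_m\to 0$ locally uniformly. Write $F_m := f_{k_m} - (\partial_t + L_{k_m})P_{k_m,(x_m,t_m),r_m}$; then $g_m = \dfrac{r_m^{-\alpha}}{\theta(r_m)}F_m(x_m+r_m\cdot,t_m+r_m^2\cdot)$. The oscillation part $F_m - F_m(x_m,t_m)$ indeed tends to zero after rescaling, because $[F_m]_{C^\alpha_p}\leq C(1+\|P_m\|)$ and $\|P_m\|/\theta(r_m)\to 0$. But the constant part $c_m = \dfrac{r_m^{-\alpha}}{\theta(r_m)}F_m(x_m,t_m)$ does \emph{not} vanish automatically: the $L^2$-projection does not satisfy the equation at the centre, so $F_m(x_m,t_m)\neq 0$ in general, and the prefactor $r_m^{-\alpha}$ is unbounded. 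Your parenthetical remark that $f_{k_m}(x_m,t_m)$ is ``absorbed into the term involving $(\partial_t+L_{k_m})P_{k_m,(x_m,t_m),r_m}$'' has no mechanism behind it once $P_m$ is fixed by $L^2$-orthogonality. In the paper's proof this issue simply does not arise: since $p_k$ is the genuine Taylor polynomial of the solution, $(\partial_t u_k + L_k u_k)(x_k,t_k) = f_k(x_k,t_k)$ forces $F_k(x_k,t_k)=0$ exactly, so only the H\"older oscillations of $f_k$, $A_k$ and $b_k$ survive.

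There are two ways to repair your route while keeping the $L^2$-projection structure. Either (a) replace $P_{k,(x_0,t_0),\rho}$ by the Taylor polynomial of $u_k$ at $(x_0,t_0)$ (you lose exact $L^2$-orthogonality, but the Liouville limit with a prescribed vanishing $2$-jet at the origin still yields $v\equiv 0$, contradicting $\|v\|_{L^\infty(Q_1)}\geq 1/2$); or (b) do not insist that $g_m\to 0$: show instead that $g_m\to c_\infty$ for some constant $c_\infty$, which is permitted by Proposition~\ref{LiouvilleFullSpace} since $\gamma=2+\alpha>2$ allows a $\textbf{P}_{0,p}$ right-hand side. Bounding $c_m$ requires the decomposition trick from Proposition~\ref{harnackExpansion}: split $v_m=v_{m,1}+v_{m,2}$ with right-hand sides $c_m$ and $g_m-c_m$ respectively, bound $v_{m,2}$ by the maximum principle, conclude $v_{m,1}$ is bounded, and invoke Lemma~\ref{boundednesOfPolynomial}. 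Finally, a smaller point: you should not cite \cite[Theorem~5.9]{Li96} for the equicontinuity of $v_m$, since that is the interior Schauder estimate itself; what is needed is only an interior $C^\alpha_p$ or Lipschitz estimate with $L^\infty$ right-hand side, e.g.\ \cite[Theorem~2]{PS19}, which is what the paper uses elsewhere.
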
	
	
	\begin{proof}
		Thanks to \cite[Lemma 2.23]{FR20}
		and interpolation inequality, it suffices to prove that for every $\delta>0$ there exists $C$ so that
		$$\left[ u\right] _{C^{2+\alpha}_p(Q_{1/2})}\leq \delta \left[ u\right] _{C^{2+\alpha}_p(Q_{1})} +  C(\left[ f\right]_{C^\alpha_p(Q_1)}+||u||_{C^2_p(Q_1)}).$$
		Dividing the equation with a constant, we can assume that $\left[ f\right]_{C^\alpha_p(Q_1)}+||u||_{C^2_p(Q_1)} = 1$.
		
		We argue with contradiction. Assume that there is $\delta>0$ so that for every $k\in\N$ there exist $u_k,f_k$ and $L_k$ $(\lambda,\Lambda)$-uniformly elliptic operators so that $\partial_t u_k + L_ku_k = f_k$ in $Q_1$, but
		$$\left[ u_k\right] _{C^{2+\alpha}_p(Q_{1/2})} > \delta \left[ u_k\right] _{C^{2+\alpha}_p(Q_{1})} + k.$$
		Choose points $(x_k,t_k),$ and $(y_k,s_k)$, so that 
		\begin{align*}
		c(n)\left[ u_k\right] _{C^{2+\alpha}_p(Q_{1/2})} \leq& \frac{|D^2u_k(x_k,t_k)-D^2u_k(y_k,s_k)|}{|x_k-y_k|^\alpha + |t_k-s_k|^{\frac{\alpha}{2}}} + \frac{|\partial_t u_k(x_k,t_k)-\partial_t u_k(y_k,s_k)|}{|x_k-y_k|^\alpha + |t_k-s_k|^{\frac{\alpha}{2}}}\\
		& + \frac{|Du_k(x_k,t_k)-Du_k(x_k,s_k)|}{ |t_k-s_k|^{\frac{1+\alpha}{2}}}.
		\end{align*}
		Define $\rho_k = |x_k-y_k| + |t_k-s_k|^{\frac{1}{2}}.$
		Then the above inequality implies
		\begin{align*}
		c(n)\left[ u_k\right] _{C^{2+\alpha}_p(Q_{1/2})} \leq& \frac{|D^2u_k(x_k,t_k)-D^2u_k(y_k,s_k)|}{\rho_k^\alpha } + \frac{|\partial_t u_k(x_k,t_k)-\partial_t u_k(y_k,s_k)|}{\rho_k^\alpha}\\
		& + \frac{|Du_k(x_k,t_k)-Du_k(x_k,s_k)|}{\rho_k^{1+\alpha}}.
		\end{align*}
		Applying triangle inequality and bounding the terms with $||u_k||_{C^2_p(Q_1)}$, we get
		$$c(n)\left[ u_k\right] _{C^{2+\alpha}_p(Q_{1/2})} \leq ||u_k||_{C^2_p(Q_1)}(\rho_k^{-\alpha} + \rho_k^{-1+\alpha}),$$
		but thanks to the contradiction assumption, 
		$$c(n)\left[ u_k\right] _{C^{2+\alpha}_p(Q_{1/2})} \leq \frac{\left[ u_k\right] _{C^{2+\alpha}_p(Q_{1/2})}}{k}(\rho_k^{-\alpha} + \rho_k^{-1+\alpha}),$$
		which assures that $\rho_k\to 0$ as $k\to \infty.$
		
		Now we define the blow-up sequence
		$$v_k(x,t) = \frac{1}{\rho_k^{2+\alpha}\left[ u_k\right] _{C^{2+\alpha}_p(Q_{1})}} \left(u_k(x_k+\rho_kx, t_k+\rho_k^2 t ) - p_k(x,t)\right),$$
		where $p_k$ is a polynomial of parabolic order $2$, so that $v_k(0,0) = Dv_k(0,0) = D^2v_k(0,0) = \partial_tv_k(0,0)=0$. Now take any $R<\frac{1}{2\rho_k}.$ Since $p_k$ is of lower order, it holds 
		$$\left[ v_k\right] _{C^{2+\alpha}_p(Q_{R})} = \frac{1}{\left[ u_k\right] _{C^{2+\alpha}_p(Q_{1})}} \left[ u_k\right] _{C^{2+\alpha}_p(Q_{R\rho_k}(x_k,t_k))}\leq 1,$$ 
		and hence also 
		$||v_k(x,t)||_{L^\infty(Q_R)}\leq R^{2+\alpha}$ and in particular $\left[ v_k\right] _{C^{2+\alpha}_p(Q_{1})}\leq 1$. Moreover, by definition of $(x_k,t_k)$ and $\rho_k$ we have
		$$\left[ v_k\right] _{C^{2+\alpha}_p(Q_{1})} \geq \delta c(n).$$ Furthermore
		\begin{align*}
		(\partial_t +\tilde{L}_k)v_k(x,t) = & \frac{1}{\rho_k^{\alpha}\left[ u_k\right] _{C^{2+\alpha}_p(Q_{1})}}   \Bigg (  f_k(x_k+\rho_kx,t_k+\rho_k^2t)-\\
		&-  \partial_tu_k(x_k,t_k) - \sum_{i,j}a^k_{i,j}(x_k+\rho_kx,t_k+\rho_k^2t)\partial_{ij}u_k(x_k,t_k)+\\
		& +\sum_i b^k_i(x_k+\rho_kx,t_k+\rho_k^2t) \partial_i u_k(x_k,t_k)+\\
		& + \rho_k\sum_{i} b^k_i(x_k+\rho_kx,t_k+\rho_k^2t) \sum_j \partial_{ij}u_k(x_k,t_k) x_j \Bigg ).
		\end{align*}
		Taking into account the regularity of the coefficients, with adding and subtracting suitable terms, we get
		\begin{align*}
		(\partial_t +\tilde{L}_k)v_k(x,t) = & \frac{1}{\rho_k^{\alpha}\left[ u_k\right] _{C^{2+\alpha}_p(Q_{1})}}   \Bigg ( f_k(x_k+\rho_kx,t_k+\rho_k^2t)-f_k(x_k,t_k)+\\
		&+ \sum_{i,j}o_{A,k}(x,t)\partial_{ij}u_k(x_k,t_k) + \sum_{i}o_{b,k}(x,t)\partial_{i}u_k(x_k,t_k) +\\
		&+\rho_k \sum_i b_i^k(x_k+\rho_k x,t_k+\rho_k^2t).\Bigg )
		\end{align*}
		and hence by contradiction assumption
		$$|(\partial_t +\tilde{L}_k)v_k(x,t)|\leq \frac{C}{k}\left( \left[ f_k\right]_{C^\alpha_p}+ ||A_k||_{C^\alpha_p}+||b_k||_{C^\alpha_p} + \rho_k^{1-\alpha}\Lambda\right),$$
		on any compact set inside $Q_{\frac{1}{2\rho_k}}$.
		
		Passing to a subsequence, Arzela-Ascoli theorem provides that $v_k$ converge in $C^{2}_p(\R^{n+1})$ on compact sets to some function $v$, which by \cite[Theorem 1.1]{Ba05} solves
		$$\left\lbrace \begin{array}{rcll}
		(\partial_t - \operatorname{tr}(AD^2))v &=& 0 & \text{in }\R^{n+1}\\
		||v||_{L^\infty(Q_R)}&\leq& R^{2+\alpha} &\text{for all }R>0\\
		v(0)=Dv(0)=D^2v(0)=\partial_t v (0)&=&0&\\
		\left[ v\right] _{C^{2+\alpha}_p(Q_{1})} &\geq& \delta c(n),&
		\end{array}  \right.$$
		for some uniformly elliptic, constant matrix $A$.
		Hence by Liouville theorem (Proposition \ref{LiouvilleFullSpace}) $v$  is a polynomial of parabolic order $2$, which is a contradiction with last two properties of $v$.
	\end{proof}
	
	Next we establish the higher order a priori estimates.
	
	\begin{corollary}
		Let $k\in\N$, $k\geq 2$, and $\alpha\in(0,1)$. Assume $u\in C^{k,\alpha}_p(Q_1)$ solves
		$$(\partial_t+L)u = f \quad\quad\text{in }Q_1,$$
		for some $f\in C^{k-2,\alpha}_p(Q_1)$, $A,b\in C^{k-2,\alpha}_p(Q_1)$. Then 
		$$\left|\left| u\right|\right| _{C^{k+\alpha}_p(Q_{1/2})}\leq C(\left[ f\right]_{C^{k-2,\alpha}_p(Q_1)}+||u||_{L^\infty(Q_1)}),$$
		where $C$ depends only on $n,\alpha$, ellipticity constants and H\"older norms of the coefficients. 
	\end{corollary}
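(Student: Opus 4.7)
The plan is to proceed by induction on $k$. The base case $k=2$ is precisely Proposition \ref{interiorSchauder}, so I would assume the result has been established at some level $k \geq 2$ and derive it at level $k+1$. In the inductive step the hypothesis is $f, A, b \in C^{k-1,\alpha}_p(Q_1)$ and the goal is $u \in C^{k+1,\alpha}_p(Q_{1/2})$ with the correct estimate. Since $f, A, b$ belong also to $C^{k-2,\alpha}_p$, the induction hypothesis applied on an intermediate cylinder, say $Q_{3/4}$, already yields
$$\|u\|_{C^{k,\alpha}_p(Q_{3/4})} \leq C\bigl(\|f\|_{C^{k-2,\alpha}_p(Q_1)} + \|u\|_{L^\infty(Q_1)}\bigr),$$
so that $u$ has enough classical regularity to differentiate the PDE spatially.

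For each $i \in \{1, \ldots, n\}$ I would then set $v := \partial_i u$ and differentiate the equation, obtaining
$$(\partial_t + L)v = \partial_i f + \sum_{j,l}(\partial_i a_{jl})\,\partial_{jl}u - \sum_{j}(\partial_i b_j)\,\partial_j u =: g \quad \text{in } Q_{3/4}.$$
The product rule together with the assumed regularity of the coefficients and the $C^{k,\alpha}_p$ bound on $u$ give $g \in C^{k-2,\alpha}_p(Q_{3/4})$ with
$$\|g\|_{C^{k-2,\alpha}_p(Q_{3/4})} \leq C\bigl(\|f\|_{C^{k-1,\alpha}_p(Q_1)} + \|u\|_{L^\infty(Q_1)}\bigr).$$
Applying the induction hypothesis to $v$ on $Q_{3/4}$ (with the same operator $L$, whose coefficients are more than $C^{k-2,\alpha}_p$), one concludes that $v = \partial_i u \in C^{k,\alpha}_p(Q_{1/2})$ with the appropriate bound. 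This controls $\nabla u$ in $C^{k,\alpha}_p(Q_{1/2})$, which by Definition \ref{parabolicHolderSpaces} is exactly the spatial-derivative part of the $C^{k+1,\alpha}_p$ norm.

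To complete the argument I would recover the time derivative directly from the equation: $\partial_t u = f - Lu$. The right-hand side lies in $C^{k-1,\alpha}_p(Q_{1/2})$ because $f$ does, $A, b$ do, and $D^2 u \in C^{k-1,\alpha}_p(Q_{1/2})$ by the preceding step applied to each $\partial_i u$. Hence $\partial_t u \in C^{k-1,\alpha}_p(Q_{1/2})$ with the desired bound, and combining with the spatial estimate yields $u \in C^{k+1,\alpha}_p(Q_{1/2})$ and closes the induction.

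The main obstacle I expect is bookkeeping: one must check that the product terms $(\partial_i a_{jl})\partial_{jl}u$ and $(\partial_i b_j)\partial_j u$ indeed live in $C^{k-2,\alpha}_p$ with norms absorbed correctly, that the shrinking of cylinders $Q_1 \supset Q_{3/4} \supset Q_{1/2}$ (possibly iterated through a finite chain $Q_{r_j}$ with $r_j \downarrow 1/2$) can be handled by a standard absorption to keep only $\|u\|_{L^\infty(Q_1)}$ on the right, and that the parabolic scaling in the two pieces $[\nabla u]_{C^{k+\alpha-1}_p}$ and $[\partial_t u]_{C^{k+\alpha-2}_p}$ of the target seminorm is respected. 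A minor but instructive point is that one cannot differentiate the PDE in time without extra regularity on the coefficients in $t$, which is precisely why the time derivative is recovered algebraically from the equation rather than by differentiation.
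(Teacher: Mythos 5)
Your proof is correct and follows essentially the same route as the paper's: induction on $k$ from the base case of Proposition~\ref{interiorSchauder}, spatial differentiation of the PDE to bootstrap $\nabla u$ via the induction hypothesis, and algebraic recovery of $\partial_t u = f - Lu$ rather than time-differentiation. You are in fact a bit more careful than the paper, which applies the induction hypothesis directly to $\partial_e u$ with right-hand side $\partial_e f$ and leaves implicit the lower-order commutator terms $(\partial_i a_{jl})\partial_{jl}u - (\partial_i b_j)\partial_j u$ that you write out and absorb explicitly; filling in that bookkeeping (and the $Q_{3/4}$ intermediate cylinder with a standard covering or absorption argument) is exactly what makes the step rigorous.
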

	
	\begin{proof}
		We prove the claim by induction. Proposition \ref{interiorSchauder} proves the case $k=2$. So let $k>2$ and assume the claim is true for all $l<k$. Choose now $e\in \S^{n-1}.$ Then by induction hypothesis 
		$$\left[ \partial_e u\right] _{C^{k-1+\alpha}_p(Q_{1/2})}\leq C(\left[ \partial_ef\right] _{C^{k-3,\alpha}_p(Q_1)}+||\partial_eu||_{L^\infty(Q_1)}),$$
		which after interpolation inequality implies
		$$\left[ D u\right] _{C^{k-1+\alpha}_p(Q_{1/2})}\leq C(\left[ f\right]_{C^{k-2,\alpha}_p(Q_1)}+||u||_{L^\infty(Q_1)}).$$
		Hence also 
		$$\left[ D^2 u\right] _{C^{k-2+\alpha}_p(Q_{1/2})}\leq C(\left[ f\right]_{C^{k-2,\alpha}_p(Q_1)}+||u||_{L^\infty(Q_1)}),$$
		and so using that $\partial_t u = f -Lu$ we also get
		$$\left[ \partial_t u\right] _{C^{k-2+\alpha}_p(Q_{1/2})}\leq C(\left[f\right]_{C^{k-2,\alpha}_p(Q_1)}+||u||_{L^\infty(Q_1)}).$$
		Combined, the inequalities render
		$$\left[  u\right] _{C^{k+\alpha}_p(Q_{1/2})}\leq C(\left[ f\right]_{C^{k-2,\alpha}_p(Q_1)}+||u||_{L^\infty(Q_1)}),$$
		and the claim is proven.
	\end{proof}

	We conclude the section by showing that solutions to equations with regular right-hand side are indeed as regular as the a priori estimates indicate. 
	
	\begin{proposition}\label{interiorEstimates}
		Let $k\in\N$, $k\geq 2$, and $\alpha\in(0,1)$. Let $A,b,f\in C^{k-2,\alpha}_p(Q_1)$, and $u$ a solution to 
		$$(\partial_t+L)u = f,\quad \text{in }Q_1.$$
		Then $u\in C^{k,\alpha}_p(Q_1),$ with 
		$$\left| \left| u\right|\right| _{C^{k+\alpha}_p(Q_{1/2})}\leq C(\left[ f\right]_{C^{k-2,\alpha}_p(Q_1)}+||u||_{L^\infty(Q_1)}),$$
		where $C$ depends only on $n,\alpha,k$, ellipticity constants and H\"older norms of the coefficients. 
	\end{proposition}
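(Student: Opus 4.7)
The plan is to derive this regularity statement from the a priori estimate of the preceding corollary by a standard approximation argument. Fix $0 < r < r' < 1$ and, for small $\varepsilon > 0$, let $A^\varepsilon, b^\varepsilon, f^\varepsilon$ be parabolic mollifications of $A, b, f$ on $Q_{r'}$: they are smooth, uniformly bounded in $C^{k-2,\alpha}_p(Q_{r'})$, satisfy ellipticity bounds close to those of $A$, and converge to $A, b, f$ in $C^{k-2,\alpha'}_p$ for every $\alpha' < \alpha$. By the classical existence theory for parabolic equations with smooth coefficients (\cite[Theorem 5.14]{Li96}), the Cauchy-Dirichlet problem
$$\partial_t u^\varepsilon + L^\varepsilon u^\varepsilon = f^\varepsilon \text{ in } Q_{r'}, \qquad u^\varepsilon = u \text{ on } \partial_p Q_{r'}$$
has a unique solution $u^\varepsilon$, which by Schauder theory is classical and lies in $C^{k,\alpha}_p$ on any slightly smaller cylinder.

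Applying the a priori estimate from the previous corollary to $u^\varepsilon$ yields
$$\|u^\varepsilon\|_{C^{k,\alpha}_p(Q_r)} \leq C\bigl(\|f^\varepsilon\|_{C^{k-2,\alpha}_p(Q_{r'})} + \|u^\varepsilon\|_{L^\infty(Q_{r'})}\bigr),$$
with $C$ independent of $\varepsilon$. The maximum principle gives the uniform bound $\|u^\varepsilon\|_{L^\infty(Q_{r'})} \leq \|u\|_{L^\infty(\partial_p Q_{r'})} + C\|f^\varepsilon\|_{L^\infty(Q_{r'})}$, so the right-hand side is bounded uniformly in $\varepsilon$. Arzela-Ascoli then yields a subsequence converging in $C^{k,\alpha'}_p(Q_r)$ for every $\alpha' < \alpha$ to some $v \in C^{k,\alpha}_p(Q_r)$, inheriting the desired bound.

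It remains to identify $v$ with $u$. Since $A^\varepsilon, b^\varepsilon, f^\varepsilon \to A, b, f$ locally uniformly and $u^\varepsilon \to v$ in $C^2_p$ on compact subsets of $Q_r$, the equation $\partial_t v + Lv = f$ holds pointwise in $Q_r$. To conclude that $v = u$, one shows that $u^\varepsilon \to u$ uniformly on $\overline{Q_{r'}}$: the difference $w^\varepsilon = u - u^\varepsilon$ satisfies a parabolic equation with right-hand side $(L^\varepsilon - L)u^\varepsilon + f - f^\varepsilon$, whose $L^\infty$ norm tends to zero by the uniform Schauder bounds on $u^\varepsilon$ combined with the convergence of the coefficients, and with zero boundary data on $\partial_p Q_{r'}$; the comparison principle \cite[Lemma 2.1]{Li96} then forces $w^\varepsilon \to 0$ in $L^\infty$. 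Alternatively, one may pass to the limit using the viscosity stability result \cite[Theorem 1.1]{Ba05} and invoke uniqueness of viscosity solutions with the same boundary data. The main obstacle is this last identification step, which requires combining the uniform Schauder bounds with a careful quantitative control of the PDE residual in order to match the limit of the classical approximations $u^\varepsilon$ with the original viscosity solution $u$.
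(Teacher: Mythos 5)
Your approach is genuinely different from the paper's. The paper disposes of the statement in three lines: for $k=2$ it cites Lieberman's classical Schauder theory (\cite[Theorem 5.9]{Li96}), which already covers merely H\"older continuous coefficients and gives that the viscosity solution is in fact $C^{2,\alpha}_p$; for $k>2$ it bootstraps by differentiating the equation \`a la Krylov (\cite[Theorem 8.12.1]{Kr96}), using the equation itself to recover regularity of $\partial_t u$. You instead regularize the data by mollification, solve the resulting smooth Dirichlet problem on a subcylinder with boundary data $u$, invoke the a priori estimate of the preceding corollary on the approximations, and extract a limit by Arzel\`a--Ascoli. This is a legitimate alternative in spirit, but it recreates by hand precisely the content of the theorem the paper cites: mollification is superfluous for the base case $k=2$ because Schauder theory already handles $C^\alpha_p$ coefficients directly, and you still need a separate mechanism (differentiation) or a separate argument to run the induction on $k$ (your scheme as written does not clearly show why $u^\varepsilon\in C^{k,\alpha}_p$ near $Q_r$ with \emph{uniform} bounds when the boundary data is only continuous; one has to work strictly inside $Q_{r'}$ and keep track of the distance to $\partial_p Q_{r'}$).

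The genuine gap is exactly the one you flag, and it is not merely technical. In the comparison-principle route, the residual $(L-L^\varepsilon)u^\varepsilon$ involves $D^2u^\varepsilon$, and the uniform Schauder bound you invoke holds only away from $\partial_p Q_{r'}$ -- near the parabolic boundary the bound degenerates because the boundary datum $u$ is only continuous. So the $L^\infty(Q_{r'})$-norm of the right-hand side in the equation for $w^\varepsilon$ is \emph{not} controlled as stated, and the comparison principle cannot be applied directly over all of $Q_{r'}$. One would need either weighted Schauder estimates (with a power of the distance to $\partial_p Q_{r'}$) or a different barrier-based argument. The viscosity-stability route can be made to work, but it requires establishing a uniform modulus of continuity for $u^\varepsilon$ up to $\partial_p Q_{r'}$ via barriers before Arzel\`a--Ascoli and \cite[Theorem 1.1]{Ba05} give a limit $v$ with $v=u$ on $\partial_p Q_{r'}$, after which uniqueness (\cite[Theorem 5.15]{Li96}) forces $v=u$. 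None of this is sketched, and it is the crux. The paper avoids all of it by simply quoting the theorem that says exactly this; your argument would need to fill it in.
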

	\begin{proof}
		We only need to prove the regularity of $u$, since we have already established the estimates.
		
		If $k=2$, then the regularity is justified by \cite[Theorem 5.9]{Li96}. For $k>2$, with the same argument as in \cite[Theorem 8.12.1]{Kr96} we establish that $\nabla u\in C^{k-1,\alpha}_p(Q_1)$. Using the equation, this also gives that $\partial_t u \in C^{k-2,\alpha}_p(Q_1)$.
	\end{proof}

	\section{Technical tools and lemmas}\label{technicalToolsAndLemmas}
	
	\begin{lemma}\label{polynomialAproximation}
		Let $\Omega\subset \R^{n+1}$ be a domain, such that  $\Omega_0$ is convex and $\{t;(t,x)\in\Omega\}$ is connected for every $x$. Let $\beta\not\in\N$ and assume $u\in C^\beta_p(\overline{\Omega})$, with $D^{k}_p u(0,0)=0$, for every $k\leq \lfloor\beta\rfloor$. Then for every $k\leq\beta$ and $r>0$ we have 
		$$||D^k_pu||_{L^\infty(\Omega\cap Q_r)}\leq Cr^{\beta-k}.$$
	\end{lemma}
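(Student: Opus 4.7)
My plan is to argue by downward induction on $k \in \{0, 1, \dots, \lfloor\beta\rfloor\}$, establishing $\|D^k_p u\|_{L^\infty(\Omega \cap Q_r)} \leq C r^{\beta - k}$ one order at a time from $k = \lfloor\beta\rfloor$ down to $k = 0$. The base case is immediate: for any $\partial^\alpha$ with $|\alpha|_p = \lfloor\beta\rfloor$, the function $\partial^\alpha u$ is $C^{\langle\beta\rangle}_p$-H\"older with seminorm bounded by $\|u\|_{C^\beta_p(\overline\Omega)}$, and it vanishes at $(0,0)$, so
$$|\partial^\alpha u(x,t)| \leq [D^{\lfloor\beta\rfloor}_p u]_{C^{\langle\beta\rangle}_p}\bigl(|x|^{\langle\beta\rangle} + |t|^{\langle\beta\rangle/2}\bigr) \leq C r^{\langle\beta\rangle} = C r^{\beta - \lfloor\beta\rfloor}$$
on $Q_r$.

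For the inductive step at order $k < \lfloor\beta\rfloor$, I would exploit the geometric hypotheses on $\Omega$ to integrate along the broken path $(0,0) \to (x,0) \to (x,t)$ inside $\overline\Omega$: convexity of $\Omega_0$ keeps the horizontal segment $(sx,0)$, $s \in [0,1]$, admissible, while the connectedness of the time-fiber through $x$ takes care of the vertical leg. Fixing $\alpha$ with $|\alpha|_p = k$, using $\partial^\alpha u(0,0) = 0$, I would decompose
$$\partial^\alpha u(x,t) = \int_0^1 \nabla \partial^\alpha u(sx,0)\cdot x\,ds + \int_0^t \partial_t\partial^\alpha u(x,s)\,ds.$$
The first integral is controlled by the inductive hypothesis at order $k+1$ (each $\partial_i\partial^\alpha u$ has parabolic order $k+1 \leq \lfloor\beta\rfloor$), giving a contribution of at most $|x|\cdot C r^{\beta-k-1} \leq C r^{\beta-k}$. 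The second integral is controlled whenever $k+2 \leq \lfloor\beta\rfloor$ by the inductive hypothesis applied to $\partial_t\partial^\alpha u$, yielding a contribution of at most $|t|\cdot C r^{\beta-k-2} \leq r^2 \cdot C r^{\beta-k-2} = C r^{\beta-k}$.

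The main obstacle is the single remaining case $k = \lfloor\beta\rfloor - 1$, in which $\partial_t\partial^\alpha u$ has parabolic order $\lfloor\beta\rfloor+1 > \beta$ and thus cannot be controlled by the $C^\beta_p$ norm via the induction. This case is precisely why Definition~\ref{parabolicHolderSpaces} includes the extra time-H\"older summand $[D^{\lfloor\beta\rfloor-1}_p u]_{C^{(\langle\beta\rangle+1)/2}_t}$: it is finite by assumption, and using it directly in place of the $\partial_t$-integral yields
$$|\partial^\alpha u(x,t) - \partial^\alpha u(x,0)| \leq C |t|^{(\langle\beta\rangle+1)/2} \leq C r^{\langle\beta\rangle+1} = C r^{\beta-k},$$
which closes the induction. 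The only delicate point beyond this is verifying that the two integration paths can indeed be run inside $\overline\Omega$ for arbitrary $(x,t)\in \Omega\cap Q_r$; this is exactly what the convexity of $\Omega_0$ and the connectedness of the vertical slices are designed to guarantee, under the implicit assumption that $(0,0) \in \overline\Omega$ coming from the vanishing hypothesis on the derivatives.
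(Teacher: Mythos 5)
Your proof is correct and follows essentially the same strategy as the paper: downward induction on $k$ from $\lfloor\beta\rfloor$, using the spatial $C^{\langle\beta\rangle}_p$ seminorm at the top order, the extra time-H\"older seminorm $[D^{\lfloor\beta\rfloor-1}_p u]_{C^{(1+\langle\beta\rangle)/2}_t}$ to close the otherwise problematic case $k=\lfloor\beta\rfloor-1$, and the inclusion $\partial_t D^k_p \subset D^{k+2}_p$ together with $|t|\leq r^2$ for the remaining cases. The only cosmetic difference is that you present the generic case $k<\lfloor\beta\rfloor$ first and then flag $k=\lfloor\beta\rfloor-1$ as the exception, whereas the paper lists the three cases in descending order; you are also more explicit than the paper about why the convexity of $\Omega_0$ and the connectedness of the vertical fibers justify the broken integration path.
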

	\begin{proof}
		We prove the claim with induction on $\beta$ and $k$.		If $k = \lfloor\beta\rfloor$, then it follows from the definition of $C^\beta_p$.
		
		If $k = \lfloor\beta\rfloor-1,$ we estimate
		$$|D^k_pu(x,t)|\leq |D^k_pu(x,t)-D^k_pu(x,0)|+ |D^ku(x,0)|\leq C|t|^{\frac{1+\langle\beta\rangle}{2}} + \int_0^{|x|} ||D^{k+1}_pu||_{L^\infty(Q_s)}ds,$$
		and hence by induction hypothesis
		$$|D^k_pu(x,t)|\leq C r^{1+\langle\beta\rangle}.$$
		
		If $k < \lfloor\beta\rfloor-1,$ we use $\partial_t D^k_pu \subset D^{k+2}_pu$ and estimate 
		$$|D^k_pu(x,t)-D^k_pu(x,0)|\leq \int_0^t ||D^{k+2}_pu||_{L^\infty(Q_s)}ds,$$
		to obtain the same result.
	\end{proof}

	\begin{lemma}\label{fullRegularityFromCylinders}
		Let $\beta> 0$ and let $\Omega\subset\R^{n+1}$ be $C^{\max(\beta,1)}_p$ in $Q_1$ in the sense of Definition \ref{assumptionsOnDomain}. Assume that a function $u\colon\Omega\cap Q_1\to\R$ satisfies 
		$$\left[ u\right] _{C^\beta_p(Q_{r}(x_0,t_0))}\leq C_0,$$
		whenever $Q_{2r}(x_0,t_0)\subset \Omega\cap Q_1$ and $d_{t_0}(x_0)\leq C_1 r$, with $C_1$ depending only on $\Omega $.
		
		Then
		$$\left[ u\right] _{C^\beta_p(\Omega\cap Q_{1/2})}\leq CC_0.$$
		The constant $C$ depends only on $n,\beta$ and $\Omega$.
	\end{lemma}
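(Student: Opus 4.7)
The plan is to combine a Whitney-type chaining argument with induction on $\lceil\beta\rceil$. The basic building block I will use is the observation that the $C^{\max(\beta,1)}_p$-regularity of $\Omega$ ensures that for every interior point $P_0 \in \Omega\cap Q_{3/4}$, the cylinder $Q_r(P_0)$ with $r = c_\Omega d(P_0)$ satisfies both $Q_{2r}(P_0)\subset\Omega\cap Q_1$ and $d_{t_0}(x_0)\le C_1 r$, so the hypothesis gives $[u]_{C^\beta_p(Q_r(P_0))} \le C_0$, where $d$ denotes the parabolic distance to $\partial\Omega$ and $d$, $d_t$ are comparable by the graph assumption.

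For the base case $\beta \in (0,1]$, I would fix $P = (x,t)$ and $Q = (y,s)$ in $\Omega\cap Q_{1/2}$, set $\rho = |x-y| + |t-s|^{1/2}$, and let $d_P, d_Q$ be the parabolic distances of $P$ and $Q$ to $\partial\Omega$. If $\rho$ is much smaller than $\min(d_P, d_Q)$, both points lie in a common good cylinder and the bound is immediate. Otherwise, exploiting the non-tangential accessibility granted by the $C^1_p$-graph structure of $\partial\Omega$, I would build a chain starting at $P$ through points at parabolic distances $d_P,\,2d_P,\,4d_P,\ldots$ from the boundary, each consecutive pair lying in a common good cylinder whose radius doubles, until reaching a point $P'$ at distance $\sim \rho$; analogously $Q \to Q'$; finally $P'$ and $Q'$ are bridged by a single good cylinder of radius $\sim \rho$. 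The hypothesis bounds each link's contribution by $C_0 r_k^\beta$, and the resulting geometric sum is dominated by the top-scale term, giving $|u(P) - u(Q)| \le CC_0 \rho^\beta$.

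For the inductive step $\beta > 1$, I note that for $\beta \in (1,2]$ the seminorm decomposes as $[u]_{C^\beta_p} = [\nabla u]_{C^{\beta-1}_p} + [u]_{C^{\beta/2}_t}$, while for $\beta > 2$ the second term is replaced by $[\partial_t u]_{C^{\beta-2}_p}$. On every good cylinder the hypothesis controls each of these pieces by $C_0$. Applying the inductive hypothesis to $\nabla u$ with exponent $\beta-1$ (and, when $\beta>2$, to $\partial_t u$ with exponent $\beta-2$) promotes the cylinder-wise control to the desired bound on $\Omega\cap Q_{1/2}$; the temporal term $[u]_{C^{\beta/2}_t}$ in the range $\beta\in(1,2]$ has exponent $\beta/2\le 1$ and reduces to the base case applied along time fibres.

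The main technical obstacle will be verifying the chain construction near the boundary: to guarantee that a step of size $d_P$ from $P$ in a direction close to the inward normal at the nearest boundary point lands at a point whose distance to $\partial\Omega$ is comparable to $2d_P$, one needs quantitative control of the tilt of the tangent planes, which is supplied by the modulus of continuity of $\nabla F$ (i.e.\ the $C^1_p$-graph regularity). Although the chain then contains $O(\log(\rho/d_P))$ links, the geometric growth of the cylinder radii makes $\sum_k r_k^\beta$ absorbable by the top-scale term $\sim\rho^\beta$, which is what drives the stated estimate.
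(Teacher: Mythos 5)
Your chaining idea for the parabolic piece $\left[D^{\lfloor\beta\rfloor}_p u\right]_{C^{\langle\beta\rangle}_p}$ — Whitney-type chains of doubling cylinders joining two points to a deep interior bridge, with the geometric sum dominated by the top scale, and the tilt of the tangent plane controlled by the modulus of continuity of $\nabla F$ — is the same core construction the paper uses (phrased there as a cone decomposition $\mathcal{C}_{(z,t_0)}$ plus a bridge between cones). The organization by induction on $\lceil\beta\rceil$ via $\nabla u$ and, for $\beta>2$, $\partial_t u$ is also a workable way to recurse on that piece.

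However, there is a genuine gap in the treatment of the time-fibre seminorm, which by the decomposition
$$\left[u\right]_{C^\beta_p} = \left[D^{\lfloor\beta\rfloor}_p u\right]_{C^{\langle\beta\rangle}_p} + \left[D^{\lfloor\beta\rfloor-1}_p u\right]_{C^{\frac{1+\langle\beta\rangle}{2}}_t}$$
must be estimated at every level. You dismiss this as ``the base case applied along time fibres,'' but that does not close. First, the domain moves: the segment $\{x\}\times[s,t]$ need not remain in $\Omega$, and even when it does, the distance $d(x,\tau)$ to $\partial\Omega$ fluctuates with $\tau$, so there is no static ``Whitney decomposition of a fibre'' to chain along. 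Second, the exponent $\frac{1+\langle\beta\rangle}{2}\in(\tfrac{1}{2},1)$ is a supra-parabolic power applied to a derivative of order $\lfloor\beta\rfloor-1$, i.e.\ one order below the top. Any chain that moves spatially to a deep auxiliary point $x_1$ with $|x-x_1|\sim|t-s|^{1/2}$ produces a first-order spatial difference $|D^{\lfloor\beta\rfloor-1}_p u(x,\cdot)-D^{\lfloor\beta\rfloor-1}_p u(x_1,\cdot)|\le C C_0|x-x_1|\sim |t-s|^{1/2}$, which is strictly larger than the target $|t-s|^{\frac{1+\langle\beta\rangle}{2}}$ and does not cancel — you lose a factor $|t-s|^{\langle\beta\rangle/2}$ that cannot be recovered by the inductive bound on $\nabla u$.

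The paper resolves exactly this obstacle with a second-difference argument: choose collinear $x_1,x_2$ with $|x-x_1|=|x_1-x_2|\le 2|t-s|^{1/2}$, deep enough that all five points $(x,t),(x_1,t),(x_2,t),(x_1,s),(x_2,s)$ lie in a single good cone. The spatial second differences of $D^{\lfloor\beta\rfloor-1}_p u$ at each fixed time scale like $|x-x_1|^{1+\langle\beta\rangle}\sim|t-s|^{\frac{1+\langle\beta\rangle}{2}}$, because the cone carries full $C^\beta_p$ regularity, while the remaining time differences at $x_1$ and $x_2$ are covered directly by the hypothesis since those points are deep inside good cylinders. Without this cancellation (or an equivalent device), your plan yields only the parabolic piece and not the full $C^\beta_p$ estimate.
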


	\begin{proof}
		Thanks to parabolic scaling and covering argument, we can assume $C_1=3$, and that $\left[ u\right] _{C^\beta_p(Q_{\frac{3r}{2}}(x_0,t_0))}\leq C_0$, or in other words, we have 
		$$\left[ u\right] _{C^\beta_p(Q_{r}(x_0,t_0))}\leq C_0, $$
		whenever $d_{t_0}(x_0) = 2r.$
		
		Let us first prove, that  $\left[ u\right] _{C^\beta_p(\mathcal{C}_{(z,t_0)})}\leq CC_0,$ where
		$$\mathcal{C}_{(z,t_0)} = \cup_{d_{t_0}(x_0) = 2r=|x_0-z|} Q_{r}(x_0,t_0), $$ 
		is a "parabolic cone" starting at a boundary point $(z,t_0)\in\partial\Omega.$ First, notice that 
		$$\left[D^{\lfloor\beta\rfloor-1}_p u\right] _{C^{\frac{1+\langle\beta\rangle}{2}}_t(\mathcal{C}_{(z,t_0)}}\leq C_0,$$
		since the points we choose for comparing the function values have to lie in the same cylinder $Q_r(x_0,t_0)$, because the space coordinate coincide.
		
		Now choose arbitrary $(x,t)$ and $(y,s)$ in $\mathcal{C}_{(z,t_0)}$.  Without loss of generality assume $d(x,t)\leq d(y,s)$. Denote $x'$ and $y'$ the central points of cylinders from $\mathcal{C}_{(z,t_0)}$, containing $(x,t)$ and $(y,s)$ respectively. We want to estimate
		$$|D^{\lfloor\beta\rfloor}_p u(x,t)-D^{\lfloor\beta\rfloor}_p u(y,s)|\leq CC_0(|x-y|^{\langle\beta\rangle} + |t-s|^{\frac{\langle\beta\rangle}{2}}).$$
		If $2d_{t_0}(x)>d_{t_0}(y)$, both points are contained in $Q_{\frac{3}{4}d_{t_0}(y)}(y,t_0)$, and hence another covering argument assures the above estimate. Otherwise we have $d_{t_0}(y)\leq 2|x-y|$ as well as $|x-x'|\leq \frac{d_{t_0}(x)}{2}$ and $|y-y'|\leq \frac{d_{t_0}(y)}{2}.$ For $i=0,1,\ldots$ denote $x_i = x'+2^{-i}(y'-x')$ and $r_i = d_{t_0}(x_i)$, so that $(x_{i+1},t),(x_{i},t)\in Q_{\frac{r_i}{2}}(x_i,t_0)$. Hence we have
		\begin{align*}
			|D^{\lfloor\beta\rfloor}_p u(x,t)-D^{\lfloor\beta\rfloor}_p u(y,s)|\leq & |D^{\lfloor\beta\rfloor}_p u(y,s)-D^{\lfloor\beta\rfloor}_p u(y',t)|+\\& + \sum_{i=0}^K |D^{\lfloor\beta\rfloor}_p u(x_i,t)-D^{\lfloor\beta\rfloor}_p u(x_{i+1},t)|+\\
			& + |D^{\lfloor\beta\rfloor}_p u(x,t)-D^{\lfloor\beta\rfloor}_p u(x',t)|\\
			\leq & C_0(|y-y'|^{\langle\beta\rangle} + |t-s|^{\frac{\langle\beta\rangle}{2}}) + \sum_{i=0}^K C_0|x_i-x_{i+1}|^{\langle\beta\rangle} \\
			&+ C_0|x-x'|^{\langle\beta\rangle} \\
			\leq &  C_0\left( 2|x-y|^{\langle\beta\rangle} + |t-s|^{\frac{\langle\beta\rangle}{2}} + |x-y|^{\langle\beta\rangle} \sum_{i=0}^\infty 2^{-i\langle\beta\rangle} \right) \\
			=& C_\beta C_0\left( |x-y|^{\langle\beta\rangle} + |t-s|^{\frac{\langle\beta\rangle}{2}}\right).
		\end{align*}
		
		Let us now turn to the general case. Pick $(x,t),(y,s)\in\Omega\cap Q_{1/2}$. Denote $z_x\in\partial \Omega_t$, $z_y\in\partial\Omega_s$ so that $d_t(x) = |x-z_x|$ and $d_s(y)=|y-z_y|$. Now choose $x_1\in \Omega_t$, a central point of the cylinder from $\mathcal{C}_{(z_x,t)}$, so that $|x-x_1|\leq 2|t-s|^{\frac{1}{2}}$, and that $(x_1,s)\in \mathcal{C}_{(z_x,t)}$. If the boundary is flat enough, this is possible, meaning that we have to restrict ourselves to a small neighbourhood of the boundary before if necessary. Now take $z_{xy}\in\partial\Omega_s$, so that $d_s(x_1) = |x_1-z_{x,y}|,$ and choose a point $y_1\in\Omega_s$, so that $(y_1,s)\in \mathcal{C}_{(z_{xy},s)}\cap \mathcal{C}_{(z_y,s)},$ but $|x_1-y_1|\leq 2 \left(|x-y| + |t-s|^{\frac{1}{2}}\right)$. Note that $(x,t)\in \mathcal{C}_{(z_x,t)},$ as well as $(y,s)\in \mathcal{C}_{(z_y,s)},$ and hence
		\begin{align*}
			|D^{\lfloor\beta\rfloor}_p u(x,t)-D^{\lfloor\beta\rfloor}_p u(y,s)|\leq& |D^{\lfloor\beta\rfloor}_p u(x,t)-D^{\lfloor\beta\rfloor}_p u(x_1,s)|+|D^{\lfloor\beta\rfloor}_p u(x_1,s)-D^{\lfloor\beta\rfloor}_p u(y_1,s)|\\
			&+|D^{\lfloor\beta\rfloor}_p u(y_1,s)-D^{\lfloor\beta\rfloor}_p u(y,s)|\\
			\leq & C_\beta C_0 \left(|x-x_1|^{\langle\beta\rangle} + |t-s|^{\frac{\langle\beta\rangle}{2}}\right) + C_\beta C_0 |x_1-y_1|^{\langle\beta\rangle}  \\
			&+ C_\beta C_0 |y_1-y|^{\langle\beta\rangle}\\
			\leq & CC_0 \left(|x-y|^{\langle\beta\rangle} + |t-s|^{\frac{\langle\beta\rangle}{2}}\right).
		\end{align*}
		
		Finally, choose two points $(x,t),(x,s)\in\Omega\cap Q_{1/2}$. Let $d_t(x) = |x-z_t|$ and $d_s(x)=|x-z_s|$. Pick $x_1$ and $x_2$ in $\Omega_t\cap\Omega_s$, so that $x,x_1,x_2$ lie on the same line, $|x-x_1 = |x_1-x_2|$, $|x-x_1|\leq 2|t-s|^{\frac{1}{2}}$, and that $(x,t),(x_1,t),(x_2,t),(x_1,s),(x_2,s)\in \mathcal{C}_{(z_t,t)}$. If $\Omega$ is flat enough, this is possible, otherwise we restrict ourselves to the smaller neighbourhood of the boundary. Therefore we have
		\begin{align*}
			|D^{\lfloor\beta\rfloor-1}_p u(x,t)-D^{\lfloor\beta\rfloor-1}_p u(x,s)|\leq & |D^{\lfloor\beta\rfloor-1}_p u(x,t)-2D^{\lfloor\beta\rfloor-1}_p u(x_1,t) + D^{\lfloor\beta\rfloor-1}_p u(x_2,t)| \\
			&+ |D^{\lfloor\beta\rfloor-1}_p u(x,s)-2D^{\lfloor\beta\rfloor-1}_p u(x_1,s) + D^{\lfloor\beta\rfloor-1}_p u(x_2,s)| \\
			&+ 2|D^{\lfloor\beta\rfloor-1}_p u(x_1,t)-D^{\lfloor\beta\rfloor-1}_p u(x_1,s)|\\
			&+ |D^{\lfloor\beta\rfloor-1}_p u(x_2,t)-D^{\lfloor\beta\rfloor-1}_p u(x_2,s)|\\
			\leq & 2 CC_0|x-x_1|^{1+\langle\beta\rangle}  + 3CC_0|t-s|^{\frac{1+\langle\beta\rangle}{2}}\\
			\leq & CC_0 |t-s|^{\frac{1+\langle\beta\rangle}{2}},
		\end{align*}
		and so the claim is proven.
	\end{proof}
	\begin{remark}
		We need the boundary to be at least $C^{0,1}_p$, so that the cones $\mathcal{C}_{(z,s)}$ come up to the boundary.
	\end{remark}

	\begin{lemma}\label{copied4.3}
		Let $\beta >0$ $\beta\not\in\N$, $\Omega\subset\R^{n+1}$, with $0\in\partial\Omega$, and let $u\in C(\overline{ Q}_1).$ Assume that for every $r\in(0,1)$ we have a polynomial $p_r\in \textbf{P}_{\lfloor\beta\rfloor,p}$ so that 
		$$||u-p_rd||_{L^\infty(Q_r)}\leq C_0r^{\beta+1}.$$
		Then there exists a polynomial $p_0\in \textbf{P}_{\lfloor\beta\rfloor,p}$ which satisfies
		$$||u-p_0d||_{L^\infty(Q_r)}\leq CC_0r^{\beta+1},\quad r\in(0,1),$$
		where $C$ depends only on $n,$ and $\beta$.
	\end{lemma}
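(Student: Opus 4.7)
The plan is to define $p_0$ as the limit (as $r \to 0$) of the polynomials $p_r$ obtained by dyadic refinement, following the same iteration scheme already deployed in the proof of Proposition~\ref{shauderExpansion}. The argument factors into four steps: a comparison of consecutive polynomials, a Markov-type transfer of sup-norm control into coefficient bounds, summing a geometric series to obtain the limit, and a final triangle inequality.

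First I would compare $p_r$ and $p_{r/2}$: by the triangle inequality
$$\|(p_r - p_{r/2})\,d\|_{L^\infty(Q_{r/2})} \leq \|u - p_r d\|_{L^\infty(Q_{r/2})} + \|u - p_{r/2} d\|_{L^\infty(Q_{r/2})} \leq C C_0 r^{\beta+1}.$$
Restricting to a subset of $Q_{r/2}$ where $d \geq c r$ (which exists when $\Omega$ has good interior density, as in all applications of this lemma) and dividing gives
$$\|p_r - p_{r/2}\|_{L^\infty(Q_{r/2}\cap\{d>cr\})} \leq C C_0 r^{\beta}.$$
Then I would apply a rescaled version of \cite[Lemma A.10]{AR20} — the same Markov-type estimate invoked in Proposition~\ref{shauderExpansion} — to bound each coefficient:
$$|p_r^{(\alpha)} - p_{r/2}^{(\alpha)}| \leq C C_0 r^{\beta - |\alpha|_p}, \quad |\alpha|_p \leq \lfloor\beta\rfloor.$$

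Since $\beta \notin \N$, we have $\beta - |\alpha|_p \geq \langle\beta\rangle > 0$ for every $|\alpha|_p \leq \lfloor\beta\rfloor$, so iterating and summing the resulting geometric series in $r = 2^{-k}$ shows that $p_{2^{-k}}^{(\alpha)}$ is Cauchy. Setting $p_0^{(\alpha)} := \lim_{k\to\infty} p_{2^{-k}}^{(\alpha)}$ defines a polynomial $p_0 \in \mathbf{P}_{\lfloor\beta\rfloor,p}$, and telescoping yields
$$|p_r^{(\alpha)} - p_0^{(\alpha)}| \leq C C_0 r^{\beta - |\alpha|_p}.$$
Multiplying by $r^{|\alpha|_p}$ and summing over $\alpha$ gives $\|p_r - p_0\|_{L^\infty(Q_r)} \leq C C_0 r^\beta$. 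Since $0 \in \partial\Omega$ implies $d(x,t) \leq |x| + |t|^{1/2} \leq 2r$ throughout $Q_r$, we conclude $\|(p_r - p_0)d\|_{L^\infty(Q_r)} \leq C C_0 r^{\beta+1}$, which combined with the hypothesis via the triangle inequality yields the claim.

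The main obstacle is the Markov-type step, which requires the set $Q_{r/2} \cap \{d > cr\}$ to be non-degenerate enough (containing a parabolic cylinder of size comparable to $r$) for the coefficient estimate to hold with constants independent of $r$. This is ensured in all applications by Definition~\ref{assumptionsOnDomain} on $\Omega$, and under that assumption the rescaling $q(x,t) \mapsto q(rx, r^2t)$ reduces the estimate to a standard Markov inequality on a fixed reference set, so no new analytic input beyond what is already used in Proposition~\ref{shauderExpansion} is needed.
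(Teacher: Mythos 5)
Your proposal is correct and takes essentially the same approach as the paper. The paper's own proof is a one-line citation: ``The proof is the same as the one of \cite[Lemma 4.3]{AR20}, just that every $B_r$ is replaced with $Q_r$, and polynomials are of bounded parabolic degree.'' Your argument is a faithful reconstruction of that cited elliptic argument in the parabolic setting: dyadic comparison of $p_r$ and $p_{r/2}$, division by $d\gtrsim r$ on a non-degenerate subset of $Q_{r/2}$, the Markov/coefficient-equivalence estimate from \cite[Lemma A.10]{AR20}, a geometric series using $\beta - |\alpha|_p \geq \langle\beta\rangle > 0$, and a closing telescoping/triangle inequality. Your remark about needing $Q_{r/2}\cap\{d>cr\}$ to contain a comparable sub-cylinder is a fair observation; the paper's statement of the lemma leaves the hypotheses on $\Omega$ (and hence on $d$) implicit, and this non-degeneracy is indeed what makes the Markov step uniform in $r$, being guaranteed by Definition~\ref{assumptionsOnDomain} in every application.
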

	\begin{proof}
		The proof is the same as the one of \cite[Lemma 4.3]{AR20}, just that every $B_r$ is replaced with $Q_r$, and polynomials are of bounded parabolic degree.
	\end{proof}
	
	\begin{lemma}\label{uniformBoundOnPolynomials}
		Let $\Omega\subset\R^{n+1}$, and assume that for every point $z\in\partial\Omega\cap Q_1$ there exists a polynomial $p_z\in \textbf{P}_{K}$, so that 
		$$||u(x)-p_z(x)v(x)||_{L^\infty(B_1(z)\cap \Omega)}\leq C_0,$$
		for some functions $u,v\in L^\infty(\Omega)$, with $C_0$ independent of $z$. Assume also that $v\geq d$. 
		
		Then we have $||p_z||\leq C C_0,$ for some $C$ independent of $z$.
	\end{lemma}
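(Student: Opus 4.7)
The core observation is that $\textbf{P}_{K}$ is finite-dimensional, hence all norms on it are equivalent. In particular, the coefficient norm $||\cdot||$ is comparable to $\sup|p|$ over any compact set unisolvent for $\textbf{P}_K$, with a constant depending only on the geometry of that set. My plan is to use $v \geq d$ to transfer the approximation bound on $u - p_z v$ into a pointwise bound on $|p_z|$ at interior points where $d$ is bounded below, and then invoke this finite-dimensional norm equivalence.

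First, the triangle inequality gives $|p_z(x)\,v(x)| \leq C_0 + ||u||_{L^{\infty}(\Omega)}$ on $B_1(z) \cap \Omega$, and dividing by $v \geq d$ yields $|p_z(x)| \leq (C_0 + ||u||_{L^{\infty}})/d(x)$ wherever $d(x) > 0$. Using the parabolic Lipschitz structure of $\Omega$ (implicit in every application of this lemma in the paper), I exhibit, uniformly in $z \in \partial\Omega \cap Q_1$, a parabolic cylinder $Q_r(y_z, s_z) \subset B_1(z) \cap \Omega$ of fixed radius $r > 0$ on which $d \geq c r > 0$; both $r$ and the relative displacement $(y_z - z, s_z)$ depend only on the ambient regularity constants of $\Omega$. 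On this cylinder the previous estimate becomes $\sup_{Q_r(y_z,s_z)}|p_z| \leq C(C_0 + ||u||_{L^\infty})$.

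The finite-dimensional norm equivalence applied to $\textbf{P}_K$ in parabolic-scaled coordinates on $Q_r(y_z, s_z)$ then bounds the coefficients of $p_z$ expanded around $(y_z, s_z)$ by this supremum. Re-expanding around the origin costs only a bounded factor, since $(y_z, s_z)$ lies in a fixed compact set, yielding $||p_z|| \leq C(C_0 + ||u||_{L^\infty})$, which after absorbing $||u||_{L^\infty}$ into the constant gives the stated bound $||p_z|| \leq C C_0$. The only step requiring genuine care is the uniform-in-$z$ existence of the interior cylinder $Q_r(y_z, s_z)$; this is the main technical point and relies on the uniform parabolic regularity of $\Omega$, which is automatic in all applications of the lemma within the paper.
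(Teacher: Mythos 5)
Your argument is correct and follows essentially the same route as the paper: both use $v\geq d$ together with the triangle inequality to bound $|p_z|$ on an interior region where $d$ is bounded below, then invoke equivalence of norms on the finite-dimensional space $\textbf{P}_K$ (the paper cites \cite[Lemma A.10]{AR20} for this). The only cosmetic difference is that the paper picks a single interior set $B$ contained in $\bigcap_z B_1(z)$ and away from $\partial\Omega$, which makes the norm-equivalence step uniform in $z$ without your extra re-expansion-around-the-origin step; both variants require the same uniform interior clearance coming from the Lipschitz-in-space regularity of $\Omega$, which you correctly flagged as the main technical point.
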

	
	\begin{proof}
		Find a non-empty, open set $B\subset \cap_{z\in\partial\Omega\cap B_1} B_1(z),$ that is away from the boundary $B\subset \{x;\text{ }d(x)>r \}$, for some $r>0$. Then we can estimate
		\begin{align*}
			||p_z||_{L^\infty(B)}\leq \frac{1}{r}||p_z v||_{L^\infty(B_1(z))}&\leq \frac{1}{r}\left(||u-p_z v||_{L^\infty(B_1(z))} + ||u||_{L^\infty(B_1(z))}\right)\\&\leq C(C_0 + ||u||_{L^\infty(\Omega)}).
		\end{align*}
		The claim follows from \cite[Lemma A.10]{AR20}.
	\end{proof}

	\begin{lemma}\label{boundednesOfPolynomial}
		Let $D\subset\R^{n+1}$ be a bounded domain and let $v$ be a solution to 
		$$\left\lbrace \begin{array}{rcll}
		(\partial_t + L)v &=& P & \text{in }D\\
		v&=&0& \text{in }\partial_p D,
		\end{array}\right.$$
		For some polynomial $P$. Then 
		$$||P||_{L^\infty(D)}\leq C ||v||_{L^\infty(D)},$$
		where $C$ depends only on $n,D,$ ellipticity constants and the degree of $P$.
	\end{lemma}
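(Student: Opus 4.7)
The plan is to argue by contradiction, exploiting the fact that polynomials of bounded parabolic degree form a finite-dimensional space so that $P$ cannot degenerate in a controlled way. Suppose the estimate fails. Then there exist operators $L_k$ satisfying \eqref{coefficientCondition}, polynomials $P_k\in\textbf{P}_{N,p}$ with $N$ fixed, and solutions $v_k$ to $\partial_t v_k + L_k v_k = P_k$ in $D$ with $v_k=0$ on $\partial_p D$, normalised so that $\|P_k\|_{L^\infty(D)}=1$ and $\|v_k\|_{L^\infty(D)}\to 0$.

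Since $D$ has nonempty interior, $\|\cdot\|_{L^\infty(D)}$ restricted to $\textbf{P}_{N,p}$ is equivalent to the sum-of-coefficients norm. Hence the coefficients of $P_k$ are uniformly bounded and, up to extracting a subsequence, $P_k$ converges uniformly on $\overline{D}$ to some polynomial $P$ with $\|P\|_{L^\infty(D)}=1$; in particular $P\not\equiv 0$. The zero set of a non-trivial polynomial being nowhere dense, I can choose $(y_0,s_0)\in D$ and $\rho>0$ so that $\overline{Q_\rho(y_0,s_0)}\subset D$ and $|P|\geq c$ on $Q_\rho(y_0,s_0)$ for some $c>0$. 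Up to flipping the sign of $(v_k,P_k)$, I assume $P\geq c$ on this cylinder, so that $P_k\geq c/2$ on $Q_\rho(y_0,s_0)$ for all $k$ large enough.

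The contradiction will come from an explicit polynomial barrier
$$\psi(x,t)=\mu\,(t-s_0+\rho^2)(\rho^2-|x-y_0|^2),\qquad (x,t)\in Q_\rho(y_0,s_0),$$
which is non-negative, vanishes on $\partial_p Q_\rho(y_0,s_0)$, and attains its maximum $2\mu\rho^4$ at $(y_0,s_0+\rho^2)$. A direct computation using only \eqref{coefficientCondition} gives $(\partial_t+L_k)\psi\leq C_1\mu\rho^2$ with $C_1$ depending only on $n$ and $\Lambda$. Choosing $\mu=c/(4C_1\rho^2)$ forces $(\partial_t+L_k)\psi\leq c/4\leq P_k$ on $Q_\rho(y_0,s_0)$ while producing $\sup\psi=:\kappa=c\rho^2/(2C_1)>0$. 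Applying the parabolic comparison principle (\cite[Lemma 2.1]{Li96}) to $v_k-\psi+\|v_k\|_{L^\infty(D)}$ on $Q_\rho(y_0,s_0)$ --- which has non-negative parabolic boundary values there and satisfies the equation with strictly positive right-hand side --- yields $v_k\geq\psi-\|v_k\|_{L^\infty(D)}$ inside $Q_\rho(y_0,s_0)$. Evaluating at $(y_0,s_0+\rho^2)$ gives $\|v_k\|_{L^\infty(D)}\geq\kappa-\|v_k\|_{L^\infty(D)}$, hence $\|v_k\|_{L^\infty(D)}\geq \kappa/2>0$ for large $k$, contradicting $\|v_k\|_{L^\infty(D)}\to 0$.

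The main obstacle is the uniformity of the barrier across the entire family of operators: the constant $C_1$ controlling $(\partial_t+L_k)\psi$ must depend only on the ellipticity data, not on the individual $A_k,b_k$, which the explicit polynomial $\psi$ achieves by construction since its first and second derivatives are controlled purely in terms of $\rho$ and $\mu$ and then contracted against quantities bounded by $\Lambda$. A minor subtlety is selecting $(y_0,s_0)$ so that the full cylinder $\overline{Q_\rho(y_0,s_0)}$ sits inside $D$ away from $\partial_p D$, which is possible because $\{P\neq 0\}$ is open and dense in the open set $D$.
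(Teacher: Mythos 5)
Your proposal is correct, and it diverges from the paper's argument at the crucial step. Both proofs start with the same contradiction normalisation $\|P_k\|_{L^\infty(D)}=1$, $\|v_k\|_{L^\infty(D)}\to0$ and exploit the finite-dimensionality of $\textbf{P}_{N,p}$ to extract a nontrivial uniform limit $P$. From there the paper passes to the limit in the equation itself, invoking Barles' stability theorem \cite[Theorem 1.1]{Ba05} to conclude that the zero function would have to solve $(\partial_t+L_0)v_0=P_0$ with $\|P_0\|=1$, a contradiction; this is short but implicitly leans on the viscosity solution machinery (and on the fact that half-relaxed limits handle the sequence $L_k$, which need not converge pointwise). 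You instead stay at the finite-$k$ level: you localize to a cylinder where $P_k$ is bounded away from zero, build the explicit polynomial barrier $\psi$ whose parabolic operator is controlled purely by $(\lambda,\Lambda)$ and $n$, and apply the comparison principle \cite[Lemma 2.1]{Li96} directly. This buys you an argument that is more elementary (no limit equation, no appeal to stability of viscosity solutions), uniform in the operator family by construction, and in principle quantitative, since the constant could be traced through the barrier. The one small point to mention explicitly is that the evaluation point $(y_0,s_0+\rho^2)$ lies on the closure of $Q_\rho(y_0,s_0)$ rather than its interior, so one should either evaluate at a slightly earlier time or note that the inequality extends to the top by continuity; this is immaterial to the conclusion.
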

	
	\begin{proof}
		If $P=0$ we have nothing to prove. Otherwise, dividing the equation with a constant we can assume that $||P||_{L^\infty(D)}=1$. Assume by contradiction, that there are sequences of $(\lambda,\Lambda)$-uniformly elliptic operators $L_k$, polynomials $P_k$ of fixed degree, with $||P_k||_{L^\infty(D)}=1$, and solutions $v_k$ of 
		$$\left\lbrace \begin{array}{rcll}
		(\partial_t + L_k)v_k &=& P_k& \text{in }D\\
		v_k&=&0& \text{in }\partial_p D,
		\end{array}\right.$$
		and that 
		$$\frac{1}{k}>  ||v_k||_{L^\infty(D)}.$$ 
		Since $P_k$ are bounded in a finite dimensional vector space, we can get a subsequence converging uniformly in $D$ to some polynomial $P_0$. Hence by convergence result \cite[Theorem 1.1]{Ba05}
		the limit function $v_0$ should satisfy
		$$\left\lbrace \begin{array}{rcll}
		(\partial_t + L_0)v_0 &=& P_0& \text{in }D\\
		v_0&=&0& \text{in }\partial_p D.
		\end{array}\right.$$
		But $v_0\equiv 0$, while $||P_0||_{L^\infty(D)}=1$, which gives a contradiction.
	\end{proof}
	
	\begin{lemma}\label{copied4.5}
		Let $\beta >0$ $\beta\not\in\N$, $\Omega\subset\R^{n+1}$, with $0\in\partial\Omega$, and let $u_1,u_2\in C(\overline{ Q}_1).$ Assume that for every $r\in(0,1)$ we have a polynomial $p_r\in \textbf{P}_{\lfloor\beta\rfloor,p}$ so that 
		$$||u_1- p_r^{(0)}u_2 - p_r^{(1)}d||_{L^\infty(Q_r)}\leq C_0r^{\beta+1}.$$
		Furthermore assume $c_0d\leq u_2\leq C_0d$, for some $c_0>0$.
		Then there exists a polynomial $p_0\in \textbf{P}_{\lfloor\beta\rfloor,p}$ which satisfies
		$$||u_1- p_0^{(0)}u_2 - p_0^{(1)}d||_{L^\infty(Q_r)}\leq CC_0r^{\beta+1},\quad r\in(0,1),$$
		where $C$ depends only on $n,c_0$ and $\beta$.
	\end{lemma}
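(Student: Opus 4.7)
The statement is the direct parabolic analogue of \cite[Lemma 4.5]{AR20}, and the plan is to follow that argument with the obvious adaptations (replacing Euclidean balls by parabolic cylinders $Q_r$ and Euclidean degree by parabolic degree $|\cdot|_p$). The strategy is to show that the family $\{p_r\}_{r\in(0,1)}$ is Cauchy as $r\downarrow 0$, extract a limit $p_0\in\textbf{P}_{\lfloor\beta\rfloor,p}$, and verify the desired estimate for $p_0$.

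The core step is a comparison estimate between consecutive dyadic scales. Applying the hypothesis at radii $r$ and $2r$, the triangle inequality yields, on $Q_r\cap\Omega$,
$$\bigl|(p_{2r}^{(0)}-p_r^{(0)})u_2 + (p_{2r}^{(1)}-p_r^{(1)})d\bigr|\leq (1+2^{\beta+1})C_0\,r^{\beta+1}.$$
Denoting $\alpha:=p_{2r}^{(0)}-p_r^{(0)}$ (a constant) and $Q:=p_{2r}^{(1)}-p_r^{(1)}$ (a polynomial of parabolic degree $\leq\lfloor\beta\rfloor$ with no constant term), I want to conclude
$$|\alpha|\leq CC_0\,r^\beta,\qquad |q_\gamma|\leq CC_0\,r^{\beta-|\gamma|_p}\ \text{ for every coefficient }q_\gamma\text{ of }Q.$$
To obtain this, I restrict to an interior region $Q_r\cap\Omega\cap\{d>r/4\}$, where $c_0 r/4\leq u_2\leq C_0 r$ and $r/4\leq d\leq r$. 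Dividing by $d$ and using the two-sided bound $u_2/d\in[c_0,C_0]$ decouples $\alpha$ from $Q$: pointwise bounds on $Q$ on a scaled-invariant open subset of this interior region translate, via the rescaled polynomial $\tilde Q(y,s)=Q(ry,r^2s)$ and the fixed-dimensional norm equivalence (as in Lemma \ref{uniformBoundOnPolynomials} / \cite[Lemma A.10]{AR20}), into the claimed coefficient-wise bound; and once $\|Q\|_{L^\infty(Q_r)}\leq CC_0\,r^\beta$ is known, plugging back into the combined bound at any point with $d\sim r$ yields $|\alpha|\leq CC_0\,r^\beta$.

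Given the comparison estimate, iteration along the dyadic sequence $2^{-j}$ gives, for any $r\in(0,1)$ and $j\in\N$,
$$|p_r^{(0)}-p_{2^j r}^{(0)}|\leq C\sum_{i=0}^{j-1}(2^i r)^\beta\leq C(2^j r)^\beta,$$
and an analogous bound for the coefficients of $p_r^{(1)}-p_{2^j r}^{(1)}$ (with the appropriate scaling weight $r^{\beta-|\gamma|_p}$). Since $p_\rho$ is bounded for $\rho\in[1/2,1)$ and the coefficients live in a fixed-dimensional space, this shows that $p_r\to p_0$ as $r\downarrow 0$ for some $p_0\in\textbf{P}_{\lfloor\beta\rfloor,p}$, with the quantitative rate $|p_r^{(0)}-p_0^{(0)}|\leq Cr^\beta$ and analogous coefficient bounds for $p_r^{(1)}-p_0^{(1)}$.

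Finally, to conclude the lemma, I use the triangle inequality
$$\|u_1-p_0^{(0)}u_2-p_0^{(1)}d\|_{L^\infty(Q_r)}\leq \|u_1-p_r^{(0)}u_2-p_r^{(1)}d\|_{L^\infty(Q_r)}+|p_r^{(0)}-p_0^{(0)}|\,\|u_2\|_{L^\infty(Q_r)}+\|(p_r^{(1)}-p_0^{(1)})d\|_{L^\infty(Q_r)}.$$
The first term is $\leq C_0 r^{\beta+1}$ by hypothesis; the second is controlled by $Cr^\beta\cdot C_0 r=CC_0 r^{\beta+1}$ using $u_2\leq C_0 d\leq C_0 r$ on $Q_r$; and the third is similarly $\leq CC_0 r^{\beta+1}$ using the coefficient bounds on $p_r^{(1)}-p_0^{(1)}$ together with $d\leq r$. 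The main obstacle is the decoupling step in the comparison estimate: extracting separate bounds on the scalar $\alpha$ and on each coefficient of $Q$ from the joint bound $|\alpha u_2+Qd|\leq Cr^{\beta+1}$, when $u_2/d$ is merely a bounded function with no further regularity. This is where the two-sided bound $c_0 d\leq u_2\leq C_0 d$ is essential, and why it is a hypothesis of the lemma.
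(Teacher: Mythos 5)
Your overall route — follow \cite[Lemma 4.5]{AR20} with parabolic cylinders and parabolic degree, show the family $p_r$ is Cauchy coefficient-wise along dyadic scales, extract a limit $p_0$, and close with a triangle inequality — is exactly what the paper does: its proof is literally the one-line citation ``the proof is the same as the one of \cite[Lemma 4.5]{AR20}, with $Q_r$ instead of $B_r$.''

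That said, your description of the decoupling step (which you rightly identify as the main obstacle) is circular as written. You say: divide by $d$ on $Q_r\cap\{d>r/4\}$ to get $|\alpha g + Q|\leq 4Cr^\beta$ with $g=u_2/d\in[c_0,C_0]$, then ``pointwise bounds on $Q$'' give coefficient bounds via norm equivalence, and finally ``once $\|Q\|_{L^\infty(Q_r)}\leq CC_0r^\beta$ is known, plugging back... yields $|\alpha|\leq CC_0r^\beta$.'' But at that point you do not yet have pointwise bounds on $Q$ alone: the estimate controls only the sum $\alpha g+Q$, and on the annulus $\{d\sim r\}$ the term $\alpha g$ can largely cancel $Q$ — the mere two-sided bound $g\in[c_0,C_0]$ does not prevent this. (If $g$ happened to agree with a polynomial on that set, one could have $Q=-\alpha g$ there with $\alpha$ and $\|Q\|$ both large.) What actually breaks the degeneracy is the structural fact you state but never use in the mechanism: $Q=p^{(1)}_{2r}-p^{(1)}_r$ has no constant term, so $Q(0,0)=0$, whereas $\alpha g$ is bounded \emph{below} in modulus by $|\alpha|c_0>0$. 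Since $\Omega$ has points approaching $(0,0)$, the polynomial $Q$ must be small on the inner edge of $\{d\gtrsim r\}$ relative to its supremum, while $\alpha g$ cannot be; together with the Markov-type bound for polynomials of bounded parabolic degree this pins down $|\alpha|\lesssim C_0r^\beta$ first, and only then does the annulus bound yield $\|Q\|_{L^\infty(Q_r)}\lesssim C_0r^{\beta}$ and thence the coefficient bounds $|q_\gamma|\lesssim C_0 r^{\beta-|\gamma|_p}$. This interplay between $Q(0,0)=0$ and $u_2\geq c_0d$ is the substance of \cite[Lemma 4.5]{AR20}; the sketch should make it explicit rather than attribute the decoupling to the two-sided bound alone.
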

	
	\begin{proof}
		The proof is the same as the one of \cite[Lemma 4.5]{AR20}, with $Q_r$ instead of $B_r$.
	\end{proof}

\end{document}